\def\v{\varepsilon}
\def\x{\xi}
\def\t{\theta}
\def\T{\Theta}
\def\k{\kappa}
\def\a{\alpha}
\def\b{\beta}
\def\g{\gamma}
\def\d{\delta}
\def\l{\lambda}
\def\mb{\mathbf}
\def\f{\frac}
\def\p{\phi}
\def\ra{\rightarrow}
\def\s{\sigma}
\def\z{\zeta}
\def\di{\displaystyle}
\def\i{\infty}
\newtheorem{theorem}{Theorem}[section]
\newtheorem{lemma}[theorem]{Lemma}
\theoremstyle{definition}
\newtheorem{remark}{Remark}
\title[Fluid Dynamic Limit to Euler Equations]
      {Fluid Dynamic Limit to the Riemann Solutions of  Euler
      Equations: I. Superposition of rarefaction waves and contact discontinuity}
\author[Feimin Huang, Yi Wang and Tong Yang]{}
\subjclass{Primary: 35Q30, 35Q20, 76N15, 76P05; Secondary: 35L65,
82B40, 82C40.}
 \keywords{Compressible Navier-Stokes equations, Boltzmann equation, rarefaction wave, contact discontinuity, fluid dynamic limit.}
 \email{fhuang@amt.ac.cn;wangyi@amss.ac.cn;matyang@cityu.edu.hk}
\begin{document}
\maketitle

% Enter the first author's name and address:
\centerline{\scshape Feimin Huang and Yi Wang}
\medskip
{\footnotesize
% please put the address of the first author
 \centerline{Institute of Applied Mathematics, AMSS and}
   \centerline{Hua Loo-Keng Key Laboratory of
 Mathematics, Academia Sinica}
 \centerline{ Beijing 100190, China}
 } % Do not forget to end the {\footnotesize by the sign }

\medskip
\centerline{\scshape Tong Yang}
\medskip
{\footnotesize
 % please put the address of the second  and third author
 \centerline{ Department of Mathematics, City University of
HongKong}\centerline{ HongKong, China}

}
\bigskip

%% The name of the associate editor will be entered by an editorial staff
 \centerline{(Communicated by Seiji Ukai)}

%The abstract of your paper
\begin{abstract}
 Fluid dynamic limit to compressible Euler equations
from compressible Navier-Stokes equations and Boltzmann equation has
been an active topic   with limited success so far. In this paper,
we consider the case when the solution of the Euler equations is a
Riemann solution consisting two rarefaction waves and a contact
discontinuity and prove this limit for both Navier-Stokes equations
and the Boltzmann equation when the viscosity, heat conductivity
coefficients and the Knudsen number tend to zero respectively. In
addition,  the uniform convergence rates in terms of the above
physical parameters are also obtained. It is noted that this is the
first rigorous proof of this limit for a Riemann solution with
superposition of three waves even though the fluid dynamic limit for
a single wave has been proved.
\end{abstract}

%The title of your section 1
\tableofcontents

\section{Introduction}
\renewcommand{\theequation}{\arabic{section}.\arabic{equation}}
\setcounter{equation}{0}

This paper concerns the fluid dynamic limit to the compressible
Euler equations for two physical models, that is, the compressible
Navier-Stokes equations and the Boltzmann equation. In the first
part, we consider zero dissipation limit of
 the compressible Navier-Stokes system for viscous and heat
conductive fluid in the Lagrangian coordinates:
\begin{equation}
\left\{
\begin{array}{ll}
\displaystyle v_{t}-u_{x}=0, \\
\displaystyle u_{t}+p_{x}=\v(\frac {u_{x}}v)_{x},\\
\displaystyle\bigl(e+\frac{u^{2}}{2}\bigr)_{t}+
(pu)_{x}=(\kappa\frac{\theta_{x}}{v}+\v \frac{uu_{x}}{v})_{x},
\end{array}
\right.
 \label{(1.1)}
\end{equation}
where the functions $v(t,x)>0,u(t,x),\t(t,x)>0$ represent the
specific volume, velocity and the absolute temperature of the gas
respectively. And $p=p(v,\t)$ is the pressure, $e=e(v,\t)$ is the
internal energy, $\v>0$ is the viscosity coefficient,  $\k>0$ is the
coefficient of the heat conductivity. Here, both $\v$ and $\k$ are
taken as  positive constants. And we consider the perfect gas where
\begin{equation}
p=\f{R\t}{v}=A v^{ -\gamma }\exp \big(\frac{\g-1}{ R}s \big),\qquad
e=\f{R\t}{\g-1}, \label{(1.2)}
\end{equation}
with $s$ denoting the entropy of the gas and $A,R>0$ , $\g>1$ being
the gas parameters.

 Formally, as the coefficients $\k$ and $\v$ tend to zero, the
limiting system of  \eqref{(1.1)} is the compressible Euler
equations
\begin{equation}
\left\{
\begin{array}{ll}
v_t-u_x=0,\\
u_t+p_x=0,\\
(e+\f{u^2}{2})_t+(pu)_x=0.
\end{array}
\right.\label{(1.2+)}
\end{equation}

The study of this limiting process of viscous flows when the
viscosity and heat conductivity coefficients tend to zero, is one of
the important problems in the theory of the compressible fluid.
 When the solution of the inviscid flow is smooth, the zero
dissipation limit  can be solved by classical scaling method.
However, the inviscid compressible flow usually contains
discontinuities, such as shock waves and contact discontinuities.
Therefore, how to justify the zero dissipation limit to the Euler
equations with basic wave patterns is a natural and difficult
problem.

Keeping in mind that the Navier-Stokes equations can be derived from
the Boltzmann equation through the Chapman-Enskog expansion
 when the Knudsen number is close
to zero, we assume the following condition on the viscosity constant
$\v$ and the heat conductivity coefficient $\k$ in the system
\eqref{(1.1)}, cf. also \cite{Jiang-Ni-Sun}:
\begin{equation}
\left\{
\begin{array}{l}
\di \k=O(\v)\qquad \qquad \rm as \qquad\v\rightarrow0;\\
\di \nu\doteq\f{\k(\v)}{\v}\geq c>0\qquad {\rm for ~some ~positive~
constant}~ c,\quad \rm as \quad\v\rightarrow0.
\end{array}
\right.  \label{(1.3)}
\end{equation}
%Note that the above assumptions are reasonable in the sense that
%when the Chapman-Enskog expansion for the Boltzmann equation is
%done, the viscosity coefficient and the heat conducting coefficient
%do satisfy the relation \eqref{(1.3)}, although they both are smooth
%function of the temperature, see the details in the following
%introduction for Boltzmann equation.

Now we briefly review some recent results on the zero dissipation
limit of the compressible fluid with basic wave patterns. For the
hyperbolic conservation laws with artificial viscosity
$$
u_t+f(u)_x=\v u_{xx},
$$
Goodman-Xin \cite{Goodman-Xin} verified the viscous limit for
piecewise smooth solutions separated by non-interacting shock waves
using a matched asymptotic expansion method. For the compressible
isentropic Navier-Stokes equations, Hoff-Liu \cite{Hoff-Liu} first
proved the vanishing viscosity limit for  piecewise constant
solutions separated by non-interacting shocks even with initial
layer. Later Xin \cite{Xin1} obtained the zero dissipation limit for
rarefaction waves and Wang \cite{Wang-H} generalized the result of
Goodmann-Xin \cite{Goodman-Xin} to the isentropic Navier-Stokes
equations.

For the inviscid limit of the full compressible Navier-Stokes
equations \eqref{(1.1)}, Jiang-Ni-Sun \cite{Jiang-Ni-Sun} justified
the zero dissipation limit of the system \eqref{(1.1)} for centered
rarefaction waves. Wang \cite{Wang} proved the zero dissipation
limit of the system \eqref{(1.1)} for  piecewise smooth solutions
separated by shocks using the matched asymptotic expansion method
introduced in \cite{Goodman-Xin}. Recently, Xin-Zeng \cite{Xin-Zeng}
considered the zero dissipation limit of the system \eqref{(1.1)}
for single rarefaction wave with well prepared initial data and
obtained a uniform decay rate in terms of  the dissipation
coefficients. And Ma \cite{Ma} obtained the zero dissipation limit
of a single strong contact discontinuity in any fixed time interval
with a decay rate.

However, to our knowledge, so far there is no result on the zero
dissipation limit of the system \eqref{(1.1)} for superposition of
different types of basic wave patterns. In the first part of this
paper, we investigate the fluid dynamic limit of the compressible
Navier-Stokes equations  when the corresponding Euler equations have
the Riemann solution as a superposition of two rarefaction waves and
a contact discontinuity. For this, we need to study the interaction
between the rarefaction waves and contact discontinuity.

In the second part of the paper, we study the hydrodynamic limit of
the Boltzmann equation \cite{Boltzmann} with
 slab symmetry
\begin{equation}
f_t + \xi_1f_x= \f{1}{\v}Q(f,f),~(f,t,x,\xi)\in {\mb R}\times{\mb
R}^+\times {\mb R}
 \times {\mb R}^3, \label{(1.4)}
\end{equation}
where $\xi=(\xi_1,\xi_2,\xi_3)\in {\mb R}^3$, $f(t,x,\xi)$ is the
 density distribution function of  particles at time $t$ with location
 $x$  and velocity $\xi$,  and $\v>0$ is called  the Knudsen number
which is proportional to the mean free path. Remark that the
notation $\v$ here is same as the viscosity of the compressible
Navier-Stokes equations (\ref{(1.1)}), but it has different physical
meanings from (\ref{(1.1)}) in different equations and related
contexts.
%The equation \eqref{(1.4)} was established by Boltzmann
%\cite{Boltzmann} in 1872 to describe the motion of  rarefied gases
%and it is a fundamental equation in statistics physics.

For monatomic gas, the rotational invariance of the particles leads
to the following  bilinear form for the collision operator
$$
\begin{array}{ll}
 \di Q(f,g)(\xi) = \f{1}{2}
\int_{{\mb R}^3}\!\!\int_{{\mb S}_+^2} \Big(f(\xi')g(\xi_*')+
f(\xi_*')g(\xi')-f(\xi) g(\xi_*) - f(\xi_*)g(\xi)\Big)\\
\di \hspace{8cm}\qquad B(|\xi-\xi_*|, \hat\t)
 \; d \xi_* d\Gamma,
 \end{array}
$$
where $\xi',\xi_*'$ are the velocities after an elastic collision of
two particles with velocities  $\xi,\xi_*$ before the collision.
Here, $\hat\t$ is the angle between the relative velocity
$\xi-\xi_*$ and the unit vector $\Gamma$ in ${\mb S}^2_+=\{\Gamma\in
{\mb S}^2:\ (\xi-\xi_*)\cdot \Gamma\geq 0\}$. The conservation of
momentum and energy gives the following relation between the
velocities before and after collision:
$$
\left\{
\begin{array}{l}
 \xi'= \xi -[(\xi-\xi_*)\cdot \Gamma] \; \Gamma, \\[3mm]
 \xi_*'= \xi_* + [(\xi-\xi_*)\cdot \Gamma] \; \Gamma.
\end{array}
\right.
$$

In this paper, we consider the Boltzmann equation for two basic
models, that is, the hard sphere model and the hard potential
including Maxwellian molecules under the assumption of angular
cut-off. For this, we assume that the collision kernel
$B(|\xi-\xi_*|,\hat\t)$ takes one of the following two forms,
$$
B(|\xi-\xi_*|,\hat\t)=|(\xi-\xi_*, \Gamma)|=|\x-\x_*|\cos\hat\t,
$$
and
$$
B(|\xi-\xi_*|,\hat\t)=|\xi-\xi_*|^{\f{n-5}{n-1}}b(\hat\t),\quad
b(\hat\t)\in L^1([0, \pi]),~n\ge 5.
$$
 Here, $n$ is the index in the potential of
 inverse power law which is
proportional to $r^{1-n}$ with $r$ being the distance between two
concerned particles.

Formally,  when the Knudsen number $\v$ tends to zero, the limit  of
the Boltzmann equation \eqref{(1.4)} is
 the classical system of Euler equations
\begin{equation}
\left\{
\begin{array}{l}
\di\rho_t+(\rho u_1)_x=0,\\
\di(\rho u_1)_t+(\rho u_1^2+p)_x=0,\\
\di(\rho u_i)_t+(\rho u_1u_i)_x=0,~i=2,3,\\
\di[\rho(E+\f{|u|^2}{2})]_t+[\rho u_1(E+\f{|u|^2}{2})+pu_1]_x=0,
\end{array}
\right. \label{(1.5)}
\end{equation}
where
\begin{equation}
\left\{
\begin{array}{l}
\di\rho(t,x)=\int_{\mb{R}^3}\varphi_0(\xi)f(t,x,\xi)d\xi,\\
\di\rho u_i(t,x)=\int_{\mb{R}^3}\varphi_i(\xi)f(t,x,\xi)d\xi,~i=1,2,3,\\
\di\rho(E+\f{|u|^2}{2})(t,x)=\int_{\mb{R}^3}\varphi_4(\xi)f(t,x,\xi)d\xi.
\end{array}
\right. \label{(1.6)}
\end{equation}
Here, $\rho$ is the density, $u=(u_1,u_2,u_3)$ is the macroscopic
velocity, $E$ is the internal energy of the gas, and $p=R\rho\t$
with $R$ being the gas constant is  the pressure. Note that the
temperature $\t$ is related to the internal energy by
$E=\f{3}{2}R\t$, and $\varphi_i(\xi)(i=0,1,2,3,4)$ are the collision
invariants given by
$$
\left\{
\begin{array}{l}
 \varphi_0(\xi) = 1, \\
 \varphi_i(\xi) = \xi_i \ \ {\textrm {for} }\ \  i=1,2,3, \\
 \varphi_4(\xi) = \f{1}{2} |\xi|^2,
\end{array}
\right.
$$
that satisfy
$$
\int_{{\mb R}^3} \varphi_i(\xi) Q(h,g) d \xi =0,\quad {\textrm {for}
} \ \  i=0,1,2,3,4.
$$

How to justify the above limit, that is,  the Euler equation
(\ref{(1.5)}) from Boltzmann equation (\ref{(1.4)}) when Knudsen
number $\v$ tends to zero is an open problem going way back to the
time of Maxwell. For this,  Hilbert  introduced the famous Hilbert
expansion to show  formally that  the first order approximation of
the Boltzmann equation gives  the Euler equations. On the other
hand, it is important to verify this limit process rigorously in
mathematics. For the case when the Euler equation has smooth
solutions, the zero Knudsen number limit of the Boltzmann equation
has been studied even in the case with an initial layer, cf.
Ukai-Asano \cite{Asona-Ukai}, Caflish \cite{Caflish}, Lachowicz
\cite{Lachowicz} and Nishida \cite{Nishida} etc. However, as is
well-known, solutions of the Euler equations (\ref{(1.5)}) in
general develop  singularities, such as shock waves and contact
discontinuities. Therefore,  how to verify the fluid limit from
Boltzmann equation to the Euler equations with basic wave patterns
becomes an natural problem. In this direction,  Yu \cite{Yu} showed
that when the solution of the Euler equations (\ref{(1.5)}) contains
only non-interacting shocks, there exists a sequence of solutions to
 the Boltzmann equation that converge to a local Maxwellian defined by the
 solution of the Euler equations (\ref{(1.5)}) uniformly
away from the shock in any fixed time interval. In this work, the
inner and outer expansions developed by Goodman-Xin
\cite{Goodman-Xin} for conservation laws and the Hilbert expansion
were skillfully and cleverly used. Recently, Huang-Wang-Yang
 \cite{Huang-Wang-Yang} proved the fluid dynamic limit of
the Boltzmann equation to the Euler equations for a single contact
discontinuity where the uniform decay rate was also obtained. And
Xin-Zeng \cite{Xin-Zeng} proved the fluid dynamic limit of the
compressible Navier-Stokes equations and Boltzmann equation to the
Euler equations with non-interacting rarefaction waves. About the
detailed introductions of  the Boltzmann equation and its
hydrodynamic limit, see the books
\cite{Cercignani-Illner-Pulvirenti}, \cite{Esposito} etc.

In this paper, we will study the hydrodynamic limit of the Boltzmann
equation when the corresponding Euler equations have a Riemann
solution as a superposition of two rarefaction waves and a contact
discontinuity. More precisely, given a Riemann solution of the Euler
equations (\ref{(1.5)}) with superposition  of two rarefaction waves
and  a contact discontinuity, we will show that there exists a
family of solutions to the Boltzmann equation that converge to a
local Maxwellian defined by the Euler solution
 uniformly away from the contact
discontinuity for strictly positive time as $\v\rightarrow 0$.
Moreover, a uniform convergence rate in $\v$ is also given.
%The proof is
%obtained by a scaling transformation of the independent variables
%and the perturbation together with  the energy method introduced
% by Liu-Yang-Yu\cite{Liu-Yang-Yu}.

As mentioned above for the compressible Navier-Stokes equations, we
also need to study the detailed wave interactions through this
limiting process.
%the new
%difficulties occur when treating the interaction terms of the
%rarefaction waves and contact discontinuities. So we introduce a
%suitable lift $t_0=\v^{\f15}$ in the construction of the approximate
%waves to overcome this new difficultiy in the hydrodynamic limit.

For later use, we now  briefly present the micro-macro decomposition
around the local Maxwellian defined by the solution to the Boltzmann
equation, cf. \cite{Liu-Yang-Yu} and \cite{Liu-Yu}. For a solution
$f(t,x,\xi)$ of the Boltzmann equation (\ref{(1.4)}), set
$$
f(t,x,\xi)=\mb{M}(t,x,\xi)+\mb{G}(t,x,\xi),
$$
where the local Maxwellian
$\mb{M}(t,x,\xi)=\mb{M}_{[\rho,u,\t]}(\xi)$ represents the
macroscopic (fluid) component of the solution, which is naturally
defined by the five conserved quantities, i.e., the mass density
$\rho(t,x)$, the momentum $\rho u(t,x)$, and the total energy
$\rho(E+\f{1}{2}|u|^2)(t,x)$ in (\ref{(1.6)}), through
\begin{equation}
\mb{M}=\mb{M}_{[\rho,u,\t]} (t,x,\xi) = \f{\rho(t,x)}{\sqrt{ (2 \pi
R \t(t,x))^3}} e^{-\f{|\xi-u(t,x)|^2}{2R\t(t,x)}}. \label{(1.7)}
\end{equation}
And $\mb{G}(t,x,\xi)$ being the difference between the solution and
the above local\break Maxwellian represents the microscopic
(non-fluid) component.

For convenience, we denote the inner product of $h$ and $g$ in
$L^2_{\xi}({\mb R}^3)$ with respect to a given Maxwellian
$\tilde{\mb{M}}$ by:
$$
 \langle h,g\rangle_{\tilde{\mb{M}}}\equiv \int_{{\mb R}^3}
 \f{1}{\tilde{\mb{M}}}h(\xi)g(\xi)d \xi.
$$
 If $\tilde{\mb{M}}$ is the local
Maxwellian $\mb{M}$ defined in (\ref{(1.7)}), with respect to the
corresponding inner product, the macroscopic space is spanned by the
following five pairwise orthogonal base
$$
\left\{
\begin{array}{l}
 \chi_0(\xi) \equiv {\di\f1{\sqrt{\rho}}\mb{M}}, \\[2mm]
 \chi_i(\xi) \equiv {\di\f{\xi_i-u_i}{\sqrt{R\t\rho}}\mb{M}} \ \ {\textrm {for} }\ \  i=1,2,3, \\[2mm]
 \chi_4(\xi) \equiv
 {\di\f{1}{\sqrt{6\rho}}(\f{|\xi-u|^2}{R\t}-3)\mb{M}},\\
 \langle\chi_i,\chi_j\rangle=\delta_{ij}, ~i,j=0,1,2,3,4.
 \end{array}
\right.
$$
In the following, if $\tilde{\mb{M}}$ is the local Maxwellian
$\mb{M}$, we just use the simplified notation
$\langle\cdot,\cdot\rangle$ to denote
 the
inner product $\langle\cdot,\cdot\rangle_{\mb{M}}$. The macroscopic
projection $\mb{P}_0$ and microscopic projection $\mb{P}_1$ can be
defined as follows
$$
\left\{
\begin{array}{l}
 \mb{P}_0h = {\di\sum_{j=0}^4\langle h,\chi_j\rangle\chi_j,} \\
 \mb{P}_1h= h-\mb{P}_0h.
 \end{array}
\right.
$$
The projections $\mb{P}_0$ and $\mb{P}_1$ are orthogonal
and satisfy
$$
\mb{P}_0\mb{P}_0=\mb{P}_0, \mb{P}_1\mb{P}_1=\mb{P}_1,
\mb{P}_0\mb{P}_1=\mb{P}_1\mb{P}_0=0.
$$
Note that a function $h(\xi)$ is called microscopic or non-fluid if
$$
\int h(\xi)\varphi_i(\xi)d\xi=0,~i=0,1,2,3,4,
$$
where $\varphi_i(\xi)$ is the collision invariants.

 Under the above micro-macro decomposition, the solution $f(t,x,\xi)$ of the Boltzmann
equation (\ref{(1.4)}) satisfies
$$
\mb{P}_0f=\mb{M},~\mb{P}_1f=\mb{G},
$$
and the Boltzmann equation (\ref{(1.4)}) becomes
\begin{equation}
(\mb{M}+\mb{G})_t+\xi_1(\mb{M}+\mb{G})_x
=\f{1}{\v}[2Q(\mb{M},\mb{G})+Q(\mb{G},\mb{G})]. \label{(1.9)}
\end{equation}
By multiplying the equation (\ref{(1.9)}) by the collision
invariants $\varphi_i(\xi)(i=0,1,2,3,4)$ and integrating the
resulting equations with respect to $\xi$ over ${\mb R}^3$, one has
the following fluid-type system for the fluid components:
\begin{equation}
\left\{
\begin{array}{lll}
\di \rho_{t}+(\rho u_1)_x=0, \\
\di (\rho u_1)_t+(\rho u_1^2
+p)_x=-\int\xi_1^2\mb{G}_xd\xi,  \\
\di (\rho u_i)_t+(\rho u_1u_i)_x=-\int\xi_1\xi_i\mb{G}_xd\xi,~ i=2,3,\\
\di [\rho(E+\f{|u|^2}{2})]_t+[\rho
u_1(E+\f{|u|^2}{2})+pu_1]_x=-\int\f12\xi_1|\xi|^2\mb{G}_xd\xi.
\end{array}
\right. \label{(1.10)}
\end{equation}

 Note that the above fluid-type system is not
closed and one more  equation for the non-fluid component ${\mb{G}}$
is needed and it
 can be obtained by applying the projection operator
$\mb{P}_1$ to the equation (\ref{(1.9)}):
\begin{equation}
\mb{G}_t+\mb{P}_1(\xi_1\mb{M}_x)+\mb{P}_1(\xi_1\mb{G}_x)
=\f{1}{\v}\left[\mb{L}_\mb{M}\mb{G}+Q(\mb{G}, \mb{G})\right].
\label{(1.11)}
\end{equation}
Here $\mb{L}_\mb{M}$ is the linearized collision operator of
$Q(f,f)$
 with respect to the local Maxwellian $\mb{M}$:
$$
\mb{L}_\mb{M} h=2Q(\mb{M}, h)=Q(\mb{M}, h)+ Q(h,\mb{M}).
$$
Note that  the null space $\mathfrak{N}$ of $\mb{L}_\mb{M}$ is
spanned by the macroscopic variables:
$$
\chi_j(\xi), ~j=0,1,2,3,4.
$$
Furthermore, there exists a positive constant $\sigma_0>0$ such that
for any function $h(\xi)\in \mathfrak{N}^\bot$, cf. \cite{Grad},
$$
<h,\mb{L}_\mb{M}h>\le -\sigma_0<\nu(|\xi|)h,h>,
$$
where $\nu(|\xi|)$ is the collision frequency. For the hard sphere
model and the hard potential including Maxwellian molecules with
angular cut-off, the collision frequency $\nu(|\xi|)$ has the
following property
$$
0<\nu_0<\nu(|\xi|)\le c(1+|\xi|)^{\kappa_0},
$$
for some positive constants $\nu_0, c$ and $0\le\kappa_0\le 1$.

Consequently, the linearized collision operator $\mb{L}_\mb{M}$ is a
dissipative operator on $L^2({\mb R}^3)$, and its inverse
$\mb{L}_\mb{M}^{-1}$  exists in $\mathfrak{N}^\bot$.

It follows from (\ref{(1.11)}) that
\begin{equation}
\mb{G}=\v \mb{L}_\mb{M}^{-1}[\mb{P}_1(\xi_1\mb{M}_x)] +\Pi,
\label{(1.12)}
\end{equation}
with
\begin{equation}
\Pi=\mb{L}_\mb{M}^{-1}[\v(\mb{G}_t+\mb
{P}_1(\xi_1\mb{G}_x))-Q(\mb{G}, \mb{G})].\label{(1.13)}
\end{equation}

Plugging the equation (\ref{(1.12)}) into (\ref{(1.10)}) gives
\begin{equation}
\left\{
\begin{array}{l}
\di \rho_{t}+(\rho u_1)_x=0,\\
\di (\rho u_1)_t+(\rho u_1^2 +p)_x=\f{4\v}{3}(\mu(\t)
u_{1x})_x-\int\xi_1^2\Pi_xd\xi,  \\
\di (\rho u_i)_t+(\rho u_1u_i)_x=\v(\mu(\t)
u_{ix})_x-\int\xi_1\xi_i\Pi_xd\xi,~ i=2,3,\\
\di [\rho(E+\f{|u|^2}{2})]_t+[\rho
u_1(E+\f{|u|^2}{2})+pu_1]_x=\v(\lambda(\t)\t_x)_x+\f{4\v}{3}(\mu(\t)u_1u_{1x})_x\\
\di\qquad\qquad +\v\sum_{i=2}^3(\mu(\t)u_iu_{ix})_x
-\int\f12\xi_1|\xi|^2\Pi_xd\xi,
\end{array}
\right. \label{(1.14)}
\end{equation}
where the viscosity coefficient $\mu(\t)>0$ and the heat
conductivity coefficient $\lambda(\t)>0$ are smooth functions of the
temperature $\t$. Here,  we normalize the gas constant $R$ to be
$\f{2}{3}$ so that $E=\t$ and $p=\f23\rho\t$. The explicit formula
of $\mu(\t)$ and $\lambda(\t)$ can be found for example in \cite{CC}, we omit it here for brevity.

Since the problem considered in this paper is  one dimensional in
the space variable $x\in {\bf R}$, in the macroscopic level, it is
more convenient to rewrite the equation (\ref{(1.4)}) and the system
(\ref{(1.5)}) in the {\it Lagrangian} coordinates as in the study of
conservation laws. That is, set the coordinate transformation:
$$
 x\Rightarrow \int_0^x \rho(t,y)dy, \qquad t\Rightarrow t.
$$
We will still denote the {\it Lagrangian} coordinates by $(t,x)$ for
simplicity of notation. Then (\ref{(1.4)}) and (\ref{(1.5)}) in the
Lagrangian coordinates become, respectively,
\begin{equation}
f_t-\f{u_1}{v}f_x+\f{\xi_1}{v}f_x=\f{1}{\v}Q(f,f),\label{(1.15)}
\end{equation}
and
\begin{equation}
\left\{
\begin{array}{llll}
\di v_{t}-u_{1x}=0,\\
\di u_{1t}+p_x=0,\\
\di u_{it}=0, ~i=2,3,\\
\di (\t+\f{|u|^{2}}{2}\bigr)_t+ (pu_1)_x=0.\\
\end{array}
\right.\label{(1.16)}
\end{equation}
Also, (\ref{(1.10)})-(\ref{(1.14)}) take the form
\begin{equation}
\left\{
\begin{array}{llll}
\di v_t-u_{1x}=0,\\
\di u_{1t}+p_x=-\int\xi_1^2\mb{G}_xd\xi,\\
\di u_{it}=-\int\xi_1\xi_i\mb{G}_xd\xi,
~i=2,3,\\
\di\bigl(\t+\f{|u|^{2}}{2}\bigr)_{t}+ (pu_1)_x=-\int\f12\xi_1|\xi|^2\mb{G}_xd\xi,\\
\end{array}
\right.\label{(1.17)}
\end{equation}
\begin{equation}
\mb{G}_t-\f{u_1}{v}\mb{G}_x+\f{1}{v}\mb{P}_1(\xi_1\mb{M}_x)+\f{1}{v}\mb{P}_1(\xi_1\mb{G}_x)=\f{1}{\v}(\mb{L}_\mb{M}\mb{G}+Q(\mb{G},\mb{G})),\label{(1.18)}
\end{equation}
with
\begin{equation}
\mb{G}=\v \mb{L}^{-1}_\mb{M}(\f{1}{v} \mb{P}_1(\xi_1
\mb{M}_x))+\Pi_1,\label{(1.19)}
\end{equation}
\begin{equation}
\Pi_1=\mb{L}_\mb{M}^{-1}[\v(\mb{G}_t-\f{u_1}v\mb{G}_x+\f{1}{v}\mb{P}_1(\xi_1\mb{G}_x))-Q(\mb{G},\mb{G})].\label{(1.21)}
\end{equation}
and
\begin{equation}
\left\{
\begin{array}{llll}
\di v_t-u_{1x}=0,\\
\di u_{1t}+p_x=\f{4\v}{3}(\f {\mu(\t)}vu_{1x})_{x}-\int\xi_1^2\Pi_{1x}d\xi,\\
\di u_{it}=\v(\f{\mu(\t)}{v}u_{ix})_x-\int\xi_1\xi_i\Pi_{1x}d\xi,
~i=2,3,\\
\di\bigl(\t+\f{|u|^{2}}{2}\bigr)_{t}+
(pu_1)_{x}=\v(\f{\lambda(\t)}{v}\t_x)_x+\f{4\v}{3}(\f{\mu(\t)}{v}u_1u_{1x})_x\\
\di\qquad+\v\sum_{i=2}^3(\f{\mu(\t)}{v} u_iu_{ix})_x
-\int\f12\xi_1|\xi|^2\Pi_{1x}d\xi.
\end{array}
\right. \label{(1.22)}
\end{equation}

With the above preparation, the main results in this paper for both
the compressible Navier-Stokes equations and the Boltzmann equation
will be given in the next section. And the proof of the zero
dissipation limit for the compressible Navier-Stokes equations will
be given in Section 3 while the proof of hydrodynamic limit for the
Boltzmann equation will be given in the last section.

\section{Main results}
\setcounter{equation}{0}

\subsection{Compressible Navier-Stokes equations}

It is well known that for the Euler equations, there are three basic
wave patterns, shock, rarefaction wave and contact discontinuity.
And the Riemann solution to the Euler equations has a basic wave
pattern consisting the superposition of these three waves with the
contact discontinuity in the middle. For later use, let us firstly
recall the wave curves for the two types of basic waves studied in
this paper.

Given the right end state $(v_+,u_+,\t_+)$,  the following wave
curves in the phase space $(v,u,\t)$ are defined with $v>0$ and
$\t>0$ for the Euler equations.

$\bullet$ Contact discontinuity wave curve:
\begin{equation}
CD(v_+,u_+,\t_+)= \{(v,u,\t)  |  u=u_+, p=p_+, v \not\equiv v_+
 \}. \label{(2.1)}
\end{equation}

$\bullet$ $i$-Rarefaction wave curve $(i=1,3)$:
\begin{equation}
 R_i (v_+, u_+, \theta_+):=\Bigg{ \{} (v, u, \theta)\Bigg{ |}v<v_+ ,~u=u_+-\int^v_{v_+}
 \lambda_i(\eta,
s_+) \,d\eta,~ s(v, \theta)=s_+\Bigg{ \}}\label{(2.2)}
\end{equation}
where $s_+=s(v_+,\t_+)$ and $\l_i=\l_i(v,s)$ is $i$-th
characteristic speed of the Euler system \eqref{(1.2+)} or
\eqref{(1.16)}.

Accordingly, when we study the Navier-Stokes equations, the
corresponding wave profiles can be defined approximately as follows,
cf. \cite{Huang-Xin-Yang}, \cite{Xin1}.

\subsubsection{Contact discontinuity}

If $(v_-,u_-,\t_-)\in CD(v_+,u_+,\t_+)$, i.e.,
$$
u_-=u_+,~p_-=p_+,
$$
then the following Riemann problem of the Euler system
\eqref{(1.2+)} with Riemann initial data
$$
(v,u,\t)(t=0,x)=\left\{
\begin{array}{ll}
(v_-,u_-,\t_-),\qquad & x<0,\\
(v_+,u_+,\t_+),\qquad & x>0
\end{array}
\right.
$$
admits a single contact discontinuity solution
\begin{equation}
(v^{cd},u^{cd},\t^{cd})(t,x)=\left\{
\begin{array}{ll}
(v_-,u_-,\t_-),\qquad & x<0,~ t>0,\\
(v_+,u_+,\t_+),\qquad & x>0,~ t>0.
\end{array}
\right.\label{(2.3)}
\end{equation}

As in \cite{Huang-Matsumura-Xin}, the viscous version of the above
contact discontinuity, called viscous contact wave
$(V^{CD},U^{CD},\T^{CD})(t,x)$, can be defined as follows. Since we
expect that
$$
P^{CD}\approx p_+=p_-, \quad \rm{and}\quad |U^{CD}|\ll1,
$$
the leading order of the energy equation $(1.1)_3$ is
$$
\f{R}{\g-1}\T_t+p_+U_x=\k(\f{\T_x}{V})_x.
$$
Thus, we can get the following nonlinear diffusion equation
$$
\T_t=a\v (\f{\T_x}{\T})_x,\quad \T(t,\pm)=\t_\pm,\quad a=\f{\nu
p_+(\g-1)}{R^2\g},
$$
which has a unique self-similar solution
$\hat\T(t,x)=\hat\T(\eta),~\eta=\f{x}{\sqrt{\v(1+t)}}$.

Now the viscous contact wave $(V^{CD},U^{CD},\T^{CD})(t,x)$ can be
defined by
\begin{equation}
\begin{array}{ll}
\di V^{CD}(t,x)=\f{R\hat\T(t,x)}{p_+},\\
\di U^{CD}(t,x)=u_+
+\f{\k(\g-1)}{R\g}\f{\hat\T_{x}(t,x)}{\hat\T(t,x)},\\
\di \T^{CD}(t,x)=\hat\T(t,x)+\f{\v[R\g-\nu(\g-1)]}{\g p_+}\hat\T_t.
\end{array}
\label{(2.4)}
\end{equation}
Here, it is straightforward to check that the viscous contact wave
defined in (\ref{(2.4)}) satisfies
\begin{equation}
|\T^{CD}-\t_\pm|+[\v(1+t)]^{\f12}|\T^{CD}_x|+\v(1+t)|\T^{CD}_{xx}|
=O(1)\d^{CD} e^{-\f{C_0x^2}{\v(1+t)}}, \label{(2.5)}
\end{equation}
as $|x|\ra+\i$, where $\d^{CD}=|\t_+-\t_-|$ represents the strength
of the viscous contact wave and $C_0$ is a positive generic
constant. Note that in the above definition, the higher order term
$\f{\v[R\g-\nu(\g-1)]}{\g p_+}\hat\T_t$ is used in
$\Theta^{CD}(t,x)$  so that the viscous contact wave
$(V^{CD},U^{CD},\T^{CD})(t,x)$ satisfies the momentum equation
exactly. Precisely,  $(V^{CD},U^{CD},\T^{CD})(t,x)$ satisfies the
system
\begin{equation}
\left\{
\begin{array}{l}
\di V^{\scriptscriptstyle CD}_{t}-U^{CD}_{x}=0,\\
\di U^{CD}_{t}+P^{CD}_{x}=\v(\f{U^{CD}_{x}}{V^{CD}})_x,\\
\di
\f{R}{\g-1}\T^{CD}_{t}+P^{CD}U^{CD}_{x}=\k(\f{\T^{CD}_{x}}{V^{CD}})_x+\v\f{(U^{CD}_{x})^2}{V^{CD}}+Q^{CD},
\end{array}
\right. \label{(2.6)}
\end{equation}
where $\di P^{CD}=\f{R\T^{CD}}{V^{CD}}$ and the error term $Q^{CD}$
has the property that
\begin{equation}
\begin{array}{ll}
\di  Q^{CD}&\di =O(1)\d^{CD}\v
(1+t)^{-2}e^{-\f{C_0x^2}{\v(1+t)}},\qquad {\rm as}~~|x|\ra +\i.
\end{array}
\label{(2.7)}
\end{equation}

\begin{remark} The viscous contact wave $(V^{CD},U^{CD},\T^{CD})(t,x)$
defined in \eqref{(2.4)} is different from the one used in
\cite{Huang-Matsumura-Xin} and \cite{Huang-Xin-Yang}. Here, this
ansatz is chosen such that the mass equation and the momentum
equation are satisfied exactly while the error term  occurs only in
the energy equation. However, note that the approximate energy
equation that the viscous contact wave satisfies is not in the
conservative form.
\end{remark}

\subsubsection{Rarefaction waves}

We now turn to the rarefaction waves. Since there is no exact
rarefaction wave profile for either the Navier-Stokes equations or
the Boltzmann equation, the following approximate rarefaction wave
profile satisfying the Euler equations was motivated by \cite{MN-86}
and \cite{Xin1}. For the completeness of the presentation, we
include its definition and the properties in this subsection.

If $(v_-, u_-, \theta_-) \in R_i (v_+, u_+, \theta_+) (i=1,3)$, then
there exists  a $i$-rarefaction wave $(v^{r_i}, u^{r_i},
\t^{r_i})(x/t)$ which is a global  solution of the following Riemann
problem
\begin{eqnarray}
\left\{
\begin{array}{l}
\di  v_{t}- u_{x}= 0,\\
\di u_{t} +  p_{x}(v, \theta) = 0 ,
\\
\di \frac{R}{\g-1}\theta_t + p(v, \theta)u_x =0,\\
\di  (v, u, \t)(t=0,x)=\left\{
\begin{array}{l}
\di (v_-, u_-, \t_-),   x<
0 ,\\
\di  (v_+, u_+, \t_+), x> 0 .
\end{array}
\right.
\end{array} \right.\label{(2.8)}
\end{eqnarray}
Consider the following inviscid Burgers equation with Riemann data
\begin{equation}
\left\{
\begin{array}{l}
w_t+ww_x=0,\\
w(t=0,x)=\left\{
\begin{array}{ll}
w_-,&x<0,\\
w_+,&x>0.
\end{array}
 \right.
  \end{array}
 \right.\label{(2.9)}
\end{equation}
If $w_-<w_+$, then the above Riemann problem admits a rarefaction
wave solution
\begin{equation}
w^r(t,x)=w^r(\f xt)=\left\{
\begin{array}{ll}
w_-,&\f xt\leq w_-,\\
\f xt,&w_-\leq \f xt\leq w_+,\\
w_+,&\f xt\geq w_+.
\end{array}
\right.\label{(2.10)}
\end{equation}
Obviously, we have the following Lemma,

\begin{lemma}\label{Lemma 2.1} For any shift $t_0>0$ in the time variable, we have
$$
|w^r(t+t_0,x)-w^r(t,x)|\leq \f{C}{t}t_0,
$$
where $C$ is a positive constant  depending only on $w_{\pm}$.
\end{lemma}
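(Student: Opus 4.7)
The plan is to reduce the estimate to a direct calculation using the explicit piecewise formula \eqref{(2.10)}. Fix $x$ and view $s\mapsto w^r(s,x)$ as a function of the single variable $s$. Because the three pieces of $w^r$ agree on the boundary rays $x=w_\pm s$, this function is continuous in $s$, and it is piecewise $C^1$: on the intervals where $x/s\le w_-$ or $x/s\ge w_+$ it is constant (equal to $w_-$ or $w_+$ respectively), while in the rarefaction fan $\{s:w_-\le x/s\le w_+\}$ it equals $x/s$.

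Next I would bound the derivative on each piece. Outside the fan $\partial_s w^r\equiv 0$, and inside the fan $\partial_s w^r(s,x)=-x/s^2$. Since the fan is precisely the set where $w_-\le x/s\le w_+$, we have $|x|/s\le C_0:=\max(|w_-|,|w_+|)$ throughout the fan, and therefore $|\partial_s w^r(s,x)|\le C_0/s$ almost everywhere in $s$.

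Using the absolute continuity of $s\mapsto w^r(s,x)$, the fundamental theorem of calculus then yields
$$|w^r(t+t_0,x)-w^r(t,x)|\le \int_t^{t+t_0}|\partial_s w^r(s,x)|\,ds \le C_0\int_t^{t+t_0}\frac{ds}{s}= C_0\log\Bigl(1+\frac{t_0}{t}\Bigr)\le \frac{C_0\,t_0}{t},$$
where the last inequality uses $\log(1+y)\le y$ for $y\ge 0$. Since $x$ was arbitrary, this gives the lemma with $C=C_0$, depending only on $w_\pm$.

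There is no genuine obstruction; the only points that need to be checked carefully are the continuity of $w^r$ across the two transition rays $x=w_\pm s$ (so that the fundamental theorem of calculus applies across the pieces), and the uniform bound $|x|/s\le C_0$ inside the fan, both of which follow immediately from \eqref{(2.10)}. A purely combinatorial alternative — splitting into the nine cases according to which piece contains $(t,x)$ and $(t+t_0,x)$ and estimating the difference in each — works equally well but is more tedious than the one-line integrated bound above.
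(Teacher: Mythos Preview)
Your proof is correct. The paper itself does not give a proof of this lemma at all --- it is introduced with the phrase ``Obviously, we have the following Lemma'' and left at that. Your argument via the pointwise derivative bound $|\partial_s w^r(s,x)|\le C_0/s$ and the fundamental theorem of calculus is exactly the kind of direct verification the authors had in mind, and nothing more is needed.
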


Remark that Lemma \ref{Lemma 2.1} plays an important role in the
wave interaction estimates for the rarefaction waves.

As in \cite{Xin1},  the approximate rarefaction wave $(V^R, U^R,
\Theta^R)(t,x)$ to the problem (\ref{(1.1)}) can be constructed by
the solution of the Burgers equation
\begin{eqnarray}
\left\{
\begin{array}{l}
\di w_{t}+ww_{x}=0,\\
\di w( 0,x
)=w_\s(x)=w(\f{x}{\s})=\f{w_++w_-}{2}+\f{w_+-w_-}{2}\tanh\f{x}{\s},
\end{array}
\right.\label{(2.11)}
\end{eqnarray}
where $\s>0$ is a small parameter
 to be determined. Note that the solution $w^r_\s(t,x)$ of the
problem (\ref{(2.11)}) is given by
$$
w^r_\s(t,x)=w_\s(x_0(t,x)),\qquad x=x_0(t,x)+w_\s(x_0(t,x))t.
$$
And $w^r_\s(t,x)$ has the following properties:

\begin{lemma}\label{Lemma 2.2} (\cite{Xin1}) Let $w_-<w_+$,
 $(\ref{(2.11)})$ has a unique smooth solution $w^r_\s(t,x)$
satisfying
\begin{enumerate}
\item[(1)] $w_-< w^r_\s(t,x)<w_+,~(w^r_\s)_x(t,x)\geq 0 $;

\item[(2)] For any $p$ $(1\leq p\leq +\infty)$, there exists a constant
$C$ such that
$$
\begin{array}{ll}
\| \f{\partial}{\partial x}w^r_\s(t,\cdot)\|_{L^p(\mathbf{R})}\leq
C\min\big{\{}(w_+-w_-)\s^{-1+ 1/p},~
(w_+-w_-)^{1/p}t^{-1+1/p}\big{\}}, \\[2mm]
 \| \f{\partial^2}{\partial x^2}w^r_\s(t,\cdot)\|_{L^p(\mathbf{R})}\leq
C\min\big{\{}(w_+-w_-)\s^{-2+ 1/p},~ \s^{-1+1/p}t^{-1}\big{\}};
\end{array}
$$
\item[(3)] If $ x-w_-t<0$ and $w_->0$, then
$$
\begin{array}{l}
 |w^r_\s(t,x)-w_-|\leq (w_+-w_-)e^{-\f{2|x-w_-t|}{\s}},\\[2mm]
 |\f{\partial}{\partial x}w^r_\s(t,\cdot)|\leq
\f{2(w_+-w_-)}{\s}e^{-\f{2|x-w_-t|}{\s}};
\end{array}
$$
 If $ x-w_+t> 0$ and $w_+<0$, then
$$
\begin{array}{l}
 |w^r_\s(t,x)-w_+|\leq (w_+-w_-)e^{-\f{2|x-w_+t|}{\s}},\\[2mm]
 |\f{\partial}{\partial x}w^r_\s(t,\cdot)|\leq
\f{2(w_+-w_-)}{\s}e^{-\f{2|x-w_+t|}{\s}};
\end{array}
$$

\item[(4)]$\sup\limits_{x\in\mathbf{R}} |w^r_\s(t,x)-w^r(\f xt)|\leq
\min\big\{(w_+-w_-),\f{\s}{t}[\ln(1+t)+|\ln\s|]\big\}$.
\end{enumerate}
\end{lemma}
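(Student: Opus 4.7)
The proof proceeds by the method of characteristics applied to the smooth initial data $w_\sigma$. Since $(w_\sigma)_x(x) = \frac{w_+-w_-}{2\sigma}\,\mathrm{sech}^2(x/\sigma) \geq 0$ and $w_\sigma$ is strictly increasing with range $(w_-,w_+)$, the characteristics $x = x_0 + w_\sigma(x_0) t$ never intersect for $t\geq 0$. Hence the inverse $x_0 = x_0(t,x)$ is well-defined, smooth, and global, and $w^r_\sigma(t,x) = w_\sigma(x_0(t,x))$ is a classical solution with $w_- < w^r_\sigma < w_+$ and $(w^r_\sigma)_x \geq 0$. This handles part (1). Implicit differentiation of the characteristic relation yields
\begin{equation*}
(w^r_\sigma)_x(t,x) = \frac{(w_\sigma)'(x_0)}{1 + (w_\sigma)'(x_0)\,t},\qquad (w^r_\sigma)_{xx}(t,x) = \frac{(w_\sigma)''(x_0)}{\bigl(1 + (w_\sigma)'(x_0)\,t\bigr)^3},
\end{equation*}
which will be the workhorses for parts (2)--(4).

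For part (2), I would derive the $L^p$ bounds by two complementary arguments. First, the direct bound $(w^r_\sigma)_x \leq (w_\sigma)'(x_0)$ combined with the change of variable $x \mapsto x_0$ (whose Jacobian is $1 + (w_\sigma)'(x_0)t \geq 1$) reduces matters to $\|(w_\sigma)'\|_{L^p}$, which by the explicit $\mathrm{sech}^2$ form is $O((w_+-w_-)\sigma^{-1+1/p})$. Second, the bound $(w^r_\sigma)_x \leq 1/t$ from the characteristic formula, combined with $\int_{\mathbb{R}} (w^r_\sigma)_x\,dx = w_+-w_-$ (total variation is preserved by monotonicity), gives
\begin{equation*}
\|(w^r_\sigma)_x\|_{L^p}^p \leq t^{-(p-1)}(w_+-w_-).
\end{equation*}
Taking the minimum yields the claim. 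The second-derivative estimate is analogous but requires the pointwise bound $|(w_\sigma)''(x_0)| \leq C\sigma^{-1}(w_\sigma)'(x_0)$ (an immediate consequence of $(\mathrm{sech}^2)' = -2\tanh\,\mathrm{sech}^2$), which converts one factor in the denominator into a $t^{-1}$.

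Part (3) is an exponential-decay argument exploiting the algebraic form of $\tanh$. A direct computation gives $w_\sigma(x_0) - w_- = (w_+-w_-)/(1+e^{-2x_0/\sigma})$, so $|w_\sigma(x_0)-w_-| \leq (w_+-w_-)e^{2x_0/\sigma}$ for $x_0<0$. When $x < w_- t$ and $w_->0$, the characteristic relation gives $x_0 = x - w_\sigma(x_0)t \leq x - w_- t < 0$, hence $e^{2x_0/\sigma} \leq e^{-2|x-w_-t|/\sigma}$. The corresponding estimate for $(w^r_\sigma)_x$ follows by the same substitution into the characteristic formula for the first derivative. The case $x > w_+ t$, $w_+<0$ is symmetric.

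For part (4), the key observation is that inside the rarefaction fan $w_-t < x < w_+t$ one has $w^r(x/t) = x/t$, while the characteristic relation gives $w^r_\sigma(t,x) - x/t = -x_0/t$; outside the fan the decay from part (3) already controls the difference. The main task, which I expect to be the most delicate step, is to bound $|x_0|$ by $O(\sigma[\log(1+t) + |\log\sigma|])$: one inverts $w_\sigma$ using its explicit expression and uses the logarithm of the $\tanh$ profile together with $w_\sigma(x_0) = x/t + O(x_0/t)$. Combining with the trivial bound $|w^r_\sigma - w^r| \leq w_+-w_-$ gives the stated minimum. The remaining routine step is to glue the interior and exterior estimates along the edges of the fan, where both bounds meet continuously thanks to part (3).
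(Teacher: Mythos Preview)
The paper does not supply its own proof of this lemma: it is quoted from \cite{Xin1} and used as a black box. Your outline is precisely the standard characteristic argument used in that reference, and it is correct. The implicit-function formulas
\[
(w^r_\sigma)_x=\frac{(w_\sigma)'(x_0)}{1+(w_\sigma)'(x_0)t},\qquad
(w^r_\sigma)_{xx}=\frac{(w_\sigma)''(x_0)}{(1+(w_\sigma)'(x_0)t)^3}
\]
together with the explicit $\mathrm{sech}^2$ profile of $(w_\sigma)'$ indeed give all four parts as you describe. One small comment on part~(2): your phrase ``converts one factor in the denominator into a $t^{-1}$'' is too telegraphic for the second-derivative $t$-bound; the clean way is to first get the $L^\infty$ bound $|(w^r_\sigma)_{xx}|\le C\sigma^{-1}t^{-1}$ and the $L^1$ bound $\|(w^r_\sigma)_{xx}\|_{L^1}\le Ct^{-1}$ (the latter via the change of variable $dx=(1+(w_\sigma)'t)\,dx_0$ and an explicit integral in $x_0/\sigma$), and then interpolate. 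For part~(4), your plan is the right one; the inversion of $\tanh$ produces $|x_0|\le C\sigma\log\bigl(\frac{w_+-w_-}{\mathrm{dist}(w_\sigma(x_0),\{w_\pm\})}\bigr)$, and inside the fan the distance to the endpoints is at least of order $\min\{\sigma/t,\,1/(1+t)\}$ by the characteristic relation, which yields the $\sigma[\ln(1+t)+|\ln\sigma|]$ factor.
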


Then  the smooth approximate rarefaction wave profile denoted by\\
$(V^{R_i}, U^{R_i}, \T^{R_i})(t,x)~(i=1,3)$ can be defined  by
\begin{eqnarray}
\left\{
\begin{array}{l}
\di  S^{R_i}(t,x)=s(V^{R_i}(t,x),\T^{R_i}(t,x))=s_+,\\
\di w_\pm=\l_{i\pm}:=\l_i(v_\pm,\t_\pm), \\
\di w_\s^r(t+t_0,x)= \l_i(V^{R_i}(t,x),s_+),\\
\di U^{R_i}(t,x)=u_+-\int^{V^{R_i}(t,x)}_{v_+}  \l_i(v,s_+) dv,
\end{array} \right.\label{(2.12)}
\end{eqnarray}
where $t_0$ is the shift used to control the interaction between
waves in different families with the property that  $t_0\ra 0$ as
$\v\ra0$. In the following,  we choose
\begin{equation}
t_0= \v^{\f15}, \qquad\mbox{\rm and}
\qquad\s=\v^{\f25}.\label{(2.13)}
\end{equation}

Note that $(V^{R_i}, U^{R_i}, \Theta^{R_i})(t,x)$ defined above
satisfies
\begin{eqnarray}
\begin{cases}
  V^{R_i}_t-U^{R_i} _{x} = 0,     \cr
    U^{R_i}_t+P^{R_i}_x
    =0,\cr
      \f{R}{\g-1}  \T^{R_i}_t
        + P^{R_i} U^{R_i}_x
    =0,
\end{cases}\label{(2.14)}
\end{eqnarray}
where $ P^{R_i}=p( V^{R_i}, \T  ^{R_i})$.

By Lemmas \ref{Lemma 2.1} and \ref{Lemma 2.2}, the properties on the
rarefaction waves can be summarized as follows.

\begin{lemma}\label{Lemma 2.3} The approximate rarefaction waves $(V^{R_i},
U^{R_i}, \Theta^{R_i})(t,x)~(i=1,3)$ constructed in \eqref{(2.12)}
have the following properties:
\begin{enumerate}
\item[(1)] $U^{R_i}_x(t,x)>0$ for $x\in \mathbf{R}$, $t>0$;
\item[(2)] For any $1\leq p\leq +\i,$ the following estimates holds,
$$
\begin{array}{ll}
\|(V^{R_i},U^{R_i}, \Theta^{R_i})_x\|_{L^p(dx)} \leq
C(t+t_0)^{-1+\f1p},\\
\|(V^{R_i},U^{R_i}, \Theta^{R_i})_{xx}\|_{L^p(dx)} \leq
C\s^{-1+\f1p}(t+t_0)^{-1},\\
\|(V^{R_i},U^{R_i}, \Theta^{R_i})_{xxx}\|_{L^p(dx)} \leq
C\s^{-2+\f1p}(t+t_0)^{-1},\\
\end{array}
$$
where the positive constant $C$ only depends on $p$ and the wave
strength;
\item[(3)] If $x\geq \l_{1+}(t+t_0)$, then
$$
\begin{array}{l}
 |(V^{R_1},U^{R_1},\T^{R_1})(t,x)-(v_-,u_-,\t_-)|\leq Ce^{-\f{2|x-\l_{1+}(t+t_0)|}{\s}},\\[2mm]
 |(V^{R_1},U^{R_1},\T^{R_1})_x(t,x)|\leq
\f{C}{\s}e^{-\f{2|x-\l_{1+}(t+t_0)|}{\s}};
\end{array}
$$
If $x\leq \l_{3-}(t+t_0)$, then
$$
\begin{array}{l}
 |(V^{R_3},U^{R_3},\T^{R_3})(t,x)-(v_+,u_+,\t_+)|\leq Ce^{-\f{2|x-\l_{3-}(t+t_0)|}{\s}},\\[2mm]
 |(V^{R_3},U^{R_3},\T^{R_3})_x(t,x)|\leq
\f{C}{\s}e^{-\f{2|x-\l_{3-}(t+t_0)|}{\s}};
\end{array}
$$

\item[(4)] There exist positive constants $C$ and $\s_0$ such that
for $\s\in(0,\s_0)$ and $t,t_0>0,$
$$
\sup_{x\in\mathbf{R}}|(V^{R_i},U^{R_i},
\Theta^{R_i})(t,x)-(v^{r_i},u^{r_i}, \theta^{r_i})(\f xt)|\leq
\f{C}{t}[\s\ln(1+t+t_0)+\s|\ln\s|+t_0].
$$
\end{enumerate}
\end{lemma}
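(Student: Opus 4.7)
The plan is to transfer the four properties of the smoothed Burgers profile $w^r_\sigma$ from Lemma~\ref{Lemma 2.2} to $(V^{R_i},U^{R_i},\Theta^{R_i})$ through the defining relations~\eqref{(2.12)}. The enabling fact is genuine nonlinearity of the $i$-th characteristic field for a polytropic gas: $\partial_v\lambda_i(v,s_+)$ has a fixed nonzero sign on the interval of interest, so the implicit equation $\lambda_i(V^{R_i},s_+)=w^r_\sigma(t+t_0,x)$ uniquely determines $V^{R_i}$ smoothly, after which $U^{R_i}$ and $\Theta^{R_i}$ follow from the integral formula in \eqref{(2.12)} and from the isentropic relation $s(V^{R_i},\Theta^{R_i})=s_+$.

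For part~(1), implicit differentiation yields $\partial_v\lambda_i(V^{R_i},s_+)\,V^{R_i}_x=(w^r_\sigma)_x(t+t_0,x)\ge 0$ by Lemma~\ref{Lemma 2.2}(1); combined with $U^{R_i}_x=-\lambda_i(V^{R_i},s_+)\,V^{R_i}_x$ and the fixed signs of $\lambda_i$ and $\partial_v\lambda_i$ appropriate to an $i$-rarefaction ($i=1,3$), this gives $U^{R_i}_x>0$. For part~(2), iterated use of the chain rule expresses $V^{R_i}_x,V^{R_i}_{xx},V^{R_i}_{xxx}$ as rational combinations of $(w^r_\sigma)_x,(w^r_\sigma)_{xx},(w^r_\sigma)_{xxx}$ (evaluated at $(t+t_0,x)$) with bounded smooth coefficients depending on $V^{R_i}$; the $L^p$ bounds in Lemma~\ref{Lemma 2.2}(2), with $t$ replaced by $t+t_0$, then yield the claimed estimates on $V^{R_i}$, and hence on $U^{R_i}$ and $\Theta^{R_i}$.

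For part~(3), we invoke Lemma~\ref{Lemma 2.2}(3) for $w^r_\sigma(t+t_0,\cdot)$. In the superposition configuration the $1$-rarefaction has $\lambda_{1\pm}<0$ and the $3$-rarefaction has $\lambda_{3\pm}>0$, which exactly fit the sign hypotheses of the two cases of Lemma~\ref{Lemma 2.2}(3). Substituting the resulting exponential decay of $w^r_\sigma$ to $\lambda_{i\pm}$ and of $(w^r_\sigma)_x$ into the implicit relation, and using the Lipschitz inverse of $v\mapsto\lambda_i(v,s_+)$, gives the stated exponential decay of $(V^{R_i},U^{R_i},\Theta^{R_i})$ and their first $x$-derivatives outside the fan. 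For part~(4), write
\[
w^r_\sigma(t+t_0,x)-w^r(x/t)=\bigl[w^r_\sigma(t+t_0,x)-w^r(x/(t+t_0))\bigr]+\bigl[w^r(x/(t+t_0))-w^r(x/t)\bigr];
\]
the first bracket is controlled by Lemma~\ref{Lemma 2.2}(4) at time $t+t_0$, while the second is $O(t_0/t)$ by Lemma~\ref{Lemma 2.1} with shift $t_0$. Summing gives the bound $O\bigl(t^{-1}[\sigma\ln(1+t+t_0)+\sigma|\ln\sigma|+t_0]\bigr)$, which transfers to the announced $L^\infty$ estimate for $(V^{R_i},U^{R_i},\Theta^{R_i})-(v^{r_i},u^{r_i},\theta^{r_i})(x/t)$ by the Lipschitz continuity of the implicit map.

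The main obstacle is part~(3): one must carefully identify which sign regime of Lemma~\ref{Lemma 2.2}(3) applies in each spatial region outside the $i$-rarefaction fan, using the superposition hypothesis that places $\lambda_{1+}<0<\lambda_{3-}$, so that the exponential layer of the tanh-smoothed initial data is preserved under the Burgers flow up to time $t+t_0$. Once the sign analysis is in place, every item reduces to routine chain-rule and Lipschitz bookkeeping through the implicit definition of $V^{R_i}$.
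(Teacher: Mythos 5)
Your proposal follows exactly the route the paper intends: it states Lemma~\ref{Lemma 2.3} as a direct consequence of Lemmas~\ref{Lemma 2.1} and~\ref{Lemma 2.2} via the defining relations~\eqref{(2.12)}, without spelling out a proof, and your write-up is a correct fleshing-out of that transfer (implicit function $V^{R_i}\mapsto\lambda_i(V^{R_i},s_+)=w^r_\sigma(t+t_0,x)$, then chain rule and Lipschitz control for parts (1)--(3), and the time-shift/approximation splitting with Lemmas~\ref{Lemma 2.1} and~\ref{Lemma 2.2}(4) for part (4)). The only point worth flagging is in part (2): the products appearing in the chain rule (e.g.\ $(w^r_\sigma)_x^2$, $(w^r_\sigma)_x(w^r_\sigma)_{xx}$) need the $L^\infty$ bound $\|(w^r_\sigma)_x\|_{L^\infty}\lesssim(t+t_0)^{-1}$ combined with the $L^p$--$\sigma$ bound to land in the stated form, and the third-order bound requires an estimate on $(w^r_\sigma)_{xxx}$ that is not literally listed in Lemma~\ref{Lemma 2.2}(2) but follows by the same characteristics computation; this omission is inherited from the paper and does not indicate a flaw in your approach.
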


\subsubsection{Superposition of rarefaction waves and contact
discontinuity} In this subsection, we will define the solution
profile that consists of the superposition of two rarefaction waves
and a contact discontinuity. Let $ (v_-,u_-,\t_-) \in$
$R_1$-$CD$-$R_3(v_+,u_+,\t_+)$. Then there exist uniquely two
intermediate states $(v_*, u_*,\t_*)$ and $(v^*, u^*,\t^*)$ such
that $(v_-, u_-,\t_-)\in R_1(v_*, u_*,\t_*)$, $(v_*, u_*,\t_*)\in
CD(v^*, u^*,\t^*)$  and $(v^*, u^*,\t^*)\in R_3(v_+,u_+,\t_+)$.

So the  wave pattern $(\bar V,\bar U,\bar\T)(t,x)$ consisting of
1-rarefaction wave, 2-contact discontinuity and 3-rarefaction wave
that solves the corresponding
 Riemann problem of the Euler system \eqref{(1.2+)} can be defined
by
\begin{eqnarray}
 \left(\begin{array}{cc} \bar V\\ \bar U \\
 \bar  \T
\end{array}
\right)(t, x)= \left(\begin{array}{cc}v^{r_1}+ v^{cd}+ v^{r_3}\\ u^{r_1}+ u^{cd}+ u^{r_3} \\
\t^{r_1}+ \t^{cd}+ \t^{r_3}
\end{array}
\right)(t, x) -\left(\begin{array}{cc} v_*+v^*\\ u_*+u^*\\
\t_*+\t^*
\end{array}
\right) ,\label{(2.15)}
\end{eqnarray}
where $(v^{r_1}, u^{r_1}, \t^{r_1} )(t,x)$ is the 1-rarefaction wave
defined in \eqref{(2.8)} with the right state $(v_+, u_+, \t_+)$
replaced by $(v_*, u_*, \theta_* )$, $(v^{cd}, u^{cd}, \t^{cd}
)(t,x)$ is the contact discontinuity   defined in \eqref{(2.3)} with
the states $(v_-, u_-, \t_-)$ and $(v_+, u_+, \t_+)$ replaced by
$(v_*, u_*, \theta_* )$ and $(v^*, u^*, \theta^* )$ respectively,
and $(v^{r_3}, u^{r_3}, \t^{r_3})(t,x)$ is the 3-rarefaction wave
defined in \eqref{(2.8)} with the left state $(v_-, u_-, \t_-)$
replaced by $(v^*, u^*, \theta^* )$.

Correspondingly, the approximate  wave pattern $(V,U,\T)(t,x)$ of
the compressible Navier-Stokes equations can be defined by
\begin{eqnarray}
 \left(\begin{array}{cc} V\\ U \\
  \T
\end{array}
\right)(t, x)= \left(\begin{array}{cc} V^{R_1}+ V^{CD}+ V^{R_3}\\ U^{R_1}+ U^{CD}+ U^{R_3} \\
\T^{R_1}+ \T^{CD}+ \T^{R_3}
\end{array}
\right)(t, x) -\left(\begin{array}{cc} v_*+v^*\\ u_*+u^*\\
\t_*+\t^*
\end{array}
\right) ,\label{(2.16)}
\end{eqnarray}
where $(V^{R_1}, U^{R_1}, \Theta^{R_1} )(t,x)$ is the approximate
1-rarefaction wave defined in \eqref{(2.12)} with the right state
$(v_+, u_+, \t_+)$ replaced by $(v_*, u_*, \theta_* )$, $(V^{CD},
U^{CD}, \Theta^{CD} )(t,x)$ is the viscous contact wave   defined in
\eqref{(2.4)} with the states $(v_-, u_-, \t_-)$ and $(v_+, u_+,
\t_+)$ replaced by $(v_*, u_*, \theta_* )$ and $(v^*, u^*, \theta^*
)$ respectively, and $(V^{R_3}, U^{R_3}, \Theta^{R_3} )(t,x)$ is the
approximate 3-rarefaction wave defined in \eqref{(2.12)} with the
left state $(v_-, u_-, \t_-)$ replaced by $(v^*, u^*, \theta^* )$.

Thus, from the construction of the contact wave and Lemma \ref{Lemma
2.3}, we have the following relation between the approximate wave
pattern $(V,U,\T)(t,x)$ of the compressible Navier-Stokes equations
and the exact inviscid  wave pattern
 $(\bar V,\bar U,\bar\T)(t,x)$ to the
Euler equations
\begin{equation}
\begin{array}{ll}
\di |(V,U,\T)(t,x)-(\bar V,\bar U,\bar\T)(t,x)|\\
\di \quad\leq
\f{C}{t}[\s\ln(1+t+t_0)+\s|\ln\s|+t_0]+C\d^{CD}e^{-\f{cx^2}{\v(1+t)}},
\end{array}\label{(2.17)}
\end{equation}
with $t_0=\v^{\f15}$ and $\s=\v^{\f25}$.

Moreover,  $(V,U,\T)(t,x)$ satisfies the following system
\begin{eqnarray}
\begin{cases}
  V_t-U _{x} = 0,     \cr
    U_t+P_x
    = \v (\frac{U_{x}}{V}) _{x}+Q_1,\cr
        \f{R}{\g-1} \T _t+P U_x
    =\k( \frac{\T_{x}}{ V})_x + \v   \frac{  U_x^2}{ V}
     +Q_2,
\end{cases}\label{(2.18)}
\end{eqnarray}
 where $ P =p( V , \T  )$, and
$$\begin{array}{ll}
   \di Q_1&\di=(P-P^{R_1}-P^{CD}-P^{R_3})_x-\v(\f{U_x}{V}-\f{U^{CD}_x}{V^{CD}})_x,\\
  \di Q_2&\di= (PU_x-P^{R_1}U^{R_1}_x-P^{CD}U^{CD}_x-P^{R_3}U^{R_3}_x)-\k(\f{\T_x}{V}-\f{\T^{CD}_x}{V^{CD}})_x\\
  &\di-\v(\f{U_x^2}{ V}-\f{(U^{CD}_x)^2}{ V^{CD}})-Q^{CD}.
\end{array}$$

Direct calculation shows that
\begin{equation}
\begin{array}{lll}
\di Q_1&=&\di O(1)\Big[|(V^{R_1}_x,\T^{R_1}_x)||(V^{CD}-v_*,\T^{CD}-\t_*,V^{R_3}-v^*,\T^{R_3}-\t^*)|\\
&&\di
+|(V^{R_3}_x,\T^{R_3}_x)||(V^{R_1}-v_*,\T^{R_1}-\t_*,V^{CD}-v^*,\T^{CD}-\t^*)|\\
&&\di+|(V^{CD}_x,\T^{CD}_x,U^{CD}_{xx})||(V^{R_1}-v_*,\T^{R_1}-\t_*,V^{R_3}-v^*,\T^{R_3}-\t^*)|\\
&&\di
+\v|(U^{CD}_x,V^{CD}_x)||(U^{R_1}_x,V^{R_1}_x,U^{R_3}_x,V^{R_3}_x)|
+\v|(U^{R_1}_x,V^{R_1}_x)||(U^{R_3}_x,V^{R_3}_x)|\Big]\\
&&\di+O(1)\v\Big[|U^{R_1}_{xx}|+|U^{R_3}_{xx}|+|U^{R_1}_x||V^{R_1}_x|+|U^{R_3}_x||V^{R_3}_x|\Big]\\
& :=&\di Q_{11}+Q_{12}.
\end{array}
\label{(2.19)}
\end{equation}
Similarly, we have
\begin{equation}
\begin{array}{lll}
\di Q_2&=&\di O(1)\Big[|U^{R_1}_x||(V^{CD}-v_*,\T^{CD}-\t_*,V^{R_3}-v^*,\T^{R_3}-\t^*)|\\
&&\di
+|U^{R_3}_x||(V^{R_1}-v_*,\T^{R_1}-\t_*,V^{CD}-v^*,\T^{CD}-\t^*)|\\
&&\di+|(U^{CD}_{x},V^{CD}_x,\T^{CD}_x)||(V^{R_1}-v_*,\T^{R_1}-\t_*,V^{R_3}-v^*,\T^{R_3}-\t^*)|\\
&&\di
+\v|(U^{CD}_x,V^{CD}_x,\T^{CD}_x)||(U^{R_1}_x,V^{R_1}_x,\T^{R_1}_x,U^{R_3}_x,V^{R_3}_x,\T^{R_1}_x)|\\
&&\di
+\v|(U^{R_1}_x,V^{R_1}_x,\T^{R_1}_x)||(U^{R_3}_x,V^{R_3}_x,\T^{R_3}_x)|\Big]\\
&&\di +O(1)\v\Big[|\T^{R_1}_{xx}|+|\T^{R_3}_{xx}|+|(U^{R_1}_x,V^{R_1}_x,\T^{R_1}_x,U^{R_3}_x,V^{R_3}_x,\T^{R_3}_x)|^2\Big]+|Q^{CD}|\\
& :=&\di Q_{21}+Q_{22}+|Q^{CD}|.
\end{array}
\label{(2.20)}
\end{equation}
Here $Q_{11}$ and $Q_{21}$ represent the interactions  coming from
different wave patterns, $Q_{12}$ and $Q_{22}$ represent the error
terms coming from the approximate  rarefaction wave profiles, and
$Q^{CD}$ is the error term defined in \eqref{(2.7)} due to the
viscous contact wave.

Firstly, we estimate the interaction terms $Q_{11}$ and $Q_{21}$ by
dividing the whole domain
$\Omega=\{(t,x)|(t,x)\in\mathbf{R}^+\times\mathbf{R}\}$ into three
regions:
$$
\begin{array}{l}
\Omega_-=\{(t,x)|2x\leq \l_{1*}(t+t_0)\},\\
\Omega_{CD}=\{(t,x)|\l_{1*}(t+t_0)<2x<\l_3^*(t+t_0)\},\\
\Omega_+=\{(t,x)|2x\geq \l_3^*(t+t_0)\},
\end{array}
$$
where $\l_{1*}=\l_1(v_*,\t_*)$ and $\l_3^*=\l_3(v^*,\t^*)$.

 Now from Lemma \ref{Lemma 2.3}, we have the following estimates in each
section:
\begin{itemize}
\item In $\Omega_-$,

$\begin{array}{ll}|V^{R_3}-v^*|&=O(1)e^{-\f{2|x|+2\l_3^*(t+t_0)}{\s}}\\
&=O(1)e^{-\l_3^*\v^{-1/5}}e^{-\f{2|x|+\l_3^*(t+t_0)}{\v^{2/5}}},
\end{array}$

$\begin{array}{ll}|(V^{CD}-v_*,V^{CD}-v^*)|&=O(1)\d^{CD}e^{-\f{C[\l_{1*}(t+t_0)]^2}{4\v(1+t)}}\\
&=O(1)e^{-\f{Ct_0(t+t_0)}{\v}}\\
&=O(1)e^{-\f{Ct_0(|x|+t+t_0)}{\v}}\\
&=O(1)e^{-C\v^{-3/5}}e^{-\f{C(|x|+t+t_0)}{\v^{4/5}}};
\end{array}
$
\item In $\Omega_{CD}$,

$\begin{array}{ll}|V^{R_1}-v_*|&=O(1)e^{-\f{2|x|+2|\l_{1*}|(t+t_0)}{\s}}\\
&=O(1)e^{-|\l_{1*}|\v^{-1/5}}e^{-\f{2|x|+|\l_{1*}|(t+t_0)}{\v^{2/5}}},
\end{array}$

$\begin{array}{ll}|V^{R_3}-v^*|&=O(1)e^{-\f{2|x|+2\l_3^*(t+t_0)}{\s}}\\
&=O(1)e^{-\l_3^*\v^{-1/5}}e^{-\f{2|x|+\l_3^*(t+t_0)}{\v^{2/5}}};
\end{array}$
\item In $\Omega_+$,

$\begin{array}{ll}|V^{R_1}-v_*|&=O(1)e^{-\f{2|x|+2|\l_{1*}|(t+t_0)}{\s}}\\
&=O(1)e^{-|\l_{1*}|\v^{-1/5}}e^{-\f{2|x|+2|\l_{1*}|(t+t_0)}{\v^{2/5}}},
\end{array}$

$\begin{array}{ll}|(V^{CD}-v_*,V^{CD}-v^*)|&=O(1)\d^{CD}e^{-\f{C[\l_3^*(t+t_0)]^2}{4\v(1+t)}}\\
&=O(1)e^{-\f{Ct_0(t+t_0)}{\v}}\\
&=O(1)e^{-\f{Ct_0(|x|+t+t_0)}{\v}}\\
&=O(1)e^{-C\v^{-3/5}}e^{-\f{C(|x|+t+t_0)}{\v^{4/5}}}.
\end{array}
$
\end{itemize}
Hence,  in summary, we have
\begin{equation}
|(Q_{11},Q_{21})|=O(1)e^{-C\v^{-1/5}}e^{-\f{C(|x|+t+t_0)}{\v^{2/5}}},\label{(2.21)}
\end{equation}
for some positive constants $C$.

Now we consider the system \eqref{(1.1)} with the initial values
\begin{equation}
(v,u,\t)(t=0,x)=(V,U,\T)(t=0,x). \label{(2.22)}
\end{equation}
Introduce the following scaled variables
\begin{equation}
y=\f{x}{\v},\quad \tau=\f{t}{\v}. \label{(2.23)}
\end{equation}
In the following, we will use the notations $(v,u,\t)(\tau,y)$ and
$(V,U,\T)(\tau,y)$ for the unknown functions and the approximate
wave profiles in the scaled variables. Set the perturbation around
the composite wave pattern $(V,U,\T)(\tau,y)$ by
$$
(\phi,\psi,\zeta)(\tau,y)=(v-V,u-U,\t-\T)(\tau,y).
$$
Then the perturbation $(\phi,\psi,\zeta)(\tau,y)$ satisfies the
system
\begin{equation}
\left\{
\begin{array}{ll}
\di \phi_{\tau}-\psi_{y}=0, \\
\di \psi_{\tau}+(p-P)_{y}=(\frac {u_{y}}v-\f{U_y}{V})_{y}-\v Q_1,\\
\di \f{R}{\g-1}\z_{\tau}+
(pu_y-PU_y)=\nu(\frac{\theta_{y}}{v}-\f{\T_y}{V})_y+(\f{u_y^2}{v}-\frac{U^2_{y}}{V})-\v
Q_2,\\
\di (\phi,\psi,\z)(\tau=0,y)=0.
\end{array}
\right.
 \label{(2.24)}
\end{equation}
And this system will be studied in Section 3.

\subsubsection{Main result to the compressible
Navier-Stokes equations}

We are now ready to  state the main result on the compressible
Navier-Stokes equations as follows.

\begin{theorem}\label{Theorem 2.1} Given a Riemann solution $(\bar V,\bar
U,\bar\T)(t,x)$ defined in \eqref{(2.15)}, which is a superposition
of two rarefaction waves and a contact discontinuity for the Euler
system (\ref{(1.2+)}), there exist small positive constants
$\delta_0$ and $\v_0$ such that if the contact wave strength
$\delta^{CD}\leq \delta_0$ and the viscosity coefficient $\v\leq
\v_0$,
 then the compressible Navier-Stokes equations (\ref{(1.1)}) with \eqref{(1.2)} and \eqref{(1.3)}
  admits a unique global solution
$(v^\v,u^\v,\t^\v)(t,x)$ satisfying
\begin{equation}
\sup_{(t,x)\in\Sigma_h}|(v^\v,u^\v,\t^\v)(t,x)-(\bar V,\bar
U,\bar\T)(t,x)|\leq C_h~\v^{\f15},\quad \forall h>0, \label{(2.25)}
\end{equation}
where $\Sigma_h=\{(t,x)|t\geq h, \f{x}{\sqrt{1+t}}\geq h
\v^{\a},0<\a<\f12\}$, and the positive constant $C_h$ depends only
on
 $h$ but is
independent of $\v$.
\end{theorem}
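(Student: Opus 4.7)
The plan is to work in the scaled Lagrangian variables $(\tau,y)=(t/\v,x/\v)$ of \eqref{(2.23)} and to establish a uniform-in-$\v$ energy estimate for the perturbation $(\phi,\psi,\zeta)=(v-V,u-U,\t-\T)$ solving \eqref{(2.24)} with zero initial data. Once such an estimate produces $\|(\phi,\psi,\zeta)(\tau)\|_{L^\infty_y}=O(\v^{1/5})$, combining it with the structural bound \eqref{(2.17)} between the viscous profile $(V,U,\T)$ and the exact Riemann solution $(\bar V,\bar U,\bar\T)$ yields \eqref{(2.25)}. The restriction $x/\sqrt{1+t}\ge h\v^{\alpha}$ with $\alpha<1/2$ defining $\Sigma_h$ is precisely what forces the contact-wave exponential tail $e^{-cx^2/(\v(1+t))}$ in \eqref{(2.17)} to be $o(\v^{1/5})$, so the $O(\v^{1/5})$ rate coming from $(\phi,\psi,\zeta)$ is the dominant contribution.

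To obtain the a priori estimate I would combine standard local existence with a continuity argument on
$$
N(\tau)^{2}:=\sup_{0\le s\le\tau}\|(\phi,\psi,\zeta)(s)\|_{H^{1}_{y}}^{2}\le \chi_{0}^{2},
$$
with $\chi_{0}$ small and independent of $\v$. The core is a relative-entropy-type $L^{2}$ estimate using the functional
$$
\mathcal{E}(\tau)=\int_{\mathbb{R}}\Big[\tfrac12\psi^{2}+R\T\,\Phi\!\big(\tfrac{v}{V}\big)+\tfrac{R}{\g-1}\T\,\Phi\!\big(\tfrac{\t}{\T}\big)\Big]\,dy,\qquad \Phi(s)=s-\ln s-1,
$$
whose time derivative produces the viscous dissipation $\nu\int\zeta_{y}^{2}/(V\T)+\int\psi_{y}^{2}/v$, the \emph{good} rarefaction contribution proportional to $U^{R}_{y}\ge 0$ acting on $\phi^{2}$ and $\zeta^{2}$, and three classes of error terms: (i) the interaction terms $Q_{11},Q_{21}$, which by \eqref{(2.21)} are exponentially small in $\v^{-1/5}$ and therefore harmless; (ii) the approximate-profile errors $Q_{12},Q_{22}$, bounded via the rarefaction decay in Lemma \ref{Lemma 2.3}; (iii) the contact-wave error $Q^{CD}$ governed by \eqref{(2.7)}. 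The cross terms involving $V^{CD}_{x},\T^{CD}_{x},U^{CD}_{xx}$, which carry the Gaussian weight $e^{-C_{0}x^{2}/(\v(1+t))}$, would be absorbed through a weighted-$L^{2}$ multiplier in the spirit of Huang--Matsumura--Xin used for a single viscous contact wave.

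A first-order estimate obtained by differentiating \eqref{(2.24)} in $y$ and testing against $(\phi_{y},\psi_{y},\zeta_{y})$ then controls $H^{1}_{y}$, with the most delicate inputs being terms like $U^{R}_{yy}\,\phi_{y}$ whose $L^{2}$ norm is at most $\s^{-1/2}(t+t_{0})^{-1}$ by Lemma \ref{Lemma 2.3}; after rescaling and using $\s=\v^{2/5}$, $t_{0}=\v^{1/5}$ these integrate to quantities of size $O(\v^{2/5})$, which is compatible with closing the estimate at level $\chi_{0}$. The Sobolev inequality $\|f\|_{L^\infty_{y}}\le C\|f\|^{1/2}_{L^{2}_{y}}\|f_{y}\|^{1/2}_{L^{2}_{y}}$ then upgrades the $H^{1}_{y}$ bound to $\|(\phi,\psi,\zeta)(\tau)\|_{L^\infty_{y}}=O(\v^{1/5})$, uniformly for $\tau\in[0,T/\v]$ for any $T>0$.

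The main obstacle will be the interaction between the viscous contact wave and the two rarefaction fans. Two tensions need to be reconciled: the contact-layer derivatives $V^{CD}_{x},\T^{CD}_{x}\sim[\v(1+t)]^{-1/2}$ are barely time-integrable and can only be absorbed via the Gaussian weight trick, while the rarefaction derivatives $(V^{R_i}_{x},U^{R_i}_{x},\T^{R_i}_{x})$ decay only algebraically in $t+t_{0}$ and interact nontrivially with the contact wave outside the Gaussian-weight region. The scaling choice $t_{0}=\v^{1/5},\ \s=\v^{2/5}$ in \eqref{(2.13)} is precisely calibrated to make the two mechanisms coexist: it ensures, as recorded in \eqref{(2.21)}, that the wave-interaction remainder is exponentially small in $\v^{-1/5}$, and it dictates the final $\v^{1/5}$ convergence rate. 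Making this balance rigorous — in particular tracking all $\v$-powers through the weighted first- and second-order estimates — will be the technical heart of the proof.
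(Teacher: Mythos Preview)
Your proposal is correct and follows essentially the same route as the paper: scaled variables, the relative-entropy $L^2$ estimate with $\Phi$, an $H^1$ bound, the heat-kernel weighted estimate to absorb the contact-layer Gaussian, Gronwall, and then Sobolev combined with \eqref{(2.17)} on $\Sigma_h$. Two refinements to be aware of when you fill in the details: the dissipation of $\phi_y$ does \emph{not} come from differentiating the mass equation and testing against $\phi_y$ (that yields no damping), but from rewriting the momentum equation via $\tilde v=v/V$ and testing against $\tilde v_y/\tilde v$ as in \eqref{(3.19)}--\eqref{(3.25)}; and the weighted contact-layer bound is the paper's Lemma~\ref{Lemma 3.3}, which applies the heat-kernel multiplier of Huang--Li--Matsumura (not Huang--Matsumura--Xin) separately to the two linear combinations $R\zeta-P\phi$ and $\tfrac{R}{\gamma-1}\zeta+P\phi$ rather than to $(\phi,\zeta)$ directly.
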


\begin{remark} Theorem \ref{Theorem 2.1} shows that, away from the initial time
$t=0$ and the contact discontinuity located at $x=0$ with the
expansion rate $\f{x^2}{\v(1+t)}$, for the viscosity coefficient
$\v<\v_0$, there exists a unique global solution
$(v^\v,u^\v,\t^\v)(t,x)$  of the compressible Navier-Stokes
equations (\ref{(1.1)}) which tends to the Riemann solution $(\bar
V,\bar U,\bar\T)(t,x)$ consisting of two rarefaction waves and a
contact discontinuity when $\v\rightarrow 0$ and
$\k=O(\v)\rightarrow 0$. Moreover, a uniform convergence rate
$\v^{\f{1}{5}}$ holds on the set $\Sigma_h$ for any $h>0$.
\end{remark}

\begin{remark} Theorem \ref{Theorem 2.1} holds uniformly when $(t,x)\in
 \Sigma_h$ for any fixed $h>0$ if the contact wave strength $\d^{CD}$
and the viscosity coefficient $\v$ are suitably small. However,  if
we restrict the problem to a set $\Sigma_h \cap \{t\leq T\}$ for any
fixed $T>0$, then we do not need to impose the smallness condition
on the contact wave strength $\d^{CD}$ because one can apply the
Gronwall inequality to get an estimate depending on time $T$ rather
than the uniform estimate in time.
\end{remark}

\subsection{Boltzmann equation}

We now turn to the Boltzmann equation. Similarly, we also define
individual wave pattern, and then the superposition and finally
state the main result in this subsection.

\subsubsection{Contact discontinuity}
We first recall the construction of the contact wave
$(V^{CD},U^{CD},\T^{CD})(t,x)$ for the Boltzmann equation in
\cite{Huang-Xin-Yang}. Consider the Euler system \eqref{(1.16)} with
a Riemann initial data
\begin{equation}
(v,u,\t)(t=0,x)=\left\{
\begin{array}{l}
(v_-,u_{-},\t_-),~~~x<0,\\
(v_+,u_{+},\t_+),~~~x>0,
\end{array}
\right. \label{(2.26)}
\end{equation}
where $u_{\pm}=(u_{1\pm},0,0)$ and $v_\pm>0,\t_\pm>0,u_{1\pm}$ are
given constants. It is known (cf. \cite{Smoller}) that the Riemann
problem (\ref{(1.16)}), (\ref{(2.26)}) admits a contact
discontinuity solution
\begin{equation}
(v^{cd},u^{cd},\t^{cd})(t,x)=\left\{
\begin{array}{l}
(v_-,u_{-},\t_-),~~~x<0,\\
(v_+,u_{+},\t_+),~~~x>0,
\end{array}
\right. \label{(2.27)}
\end{equation}
provided that
\begin{equation}
u_{1+}=u_{1-},\qquad p_-:=\f{2\t_-}{3v_-}=p_+:=\f{2\t_+}{3v_+}.
\label{(2.28)}
\end{equation}
Motivated by (\ref{(2.27)}) and (\ref{(2.28)}), we expect that for
the contact wave\\ $(V^{CD},U^{CD},\T^{CD})(t,x)$,
$$
P^{CD}=\f{2\T^{CD}}{3V^{CD}}\approx p_+,~~~|U^{CD}|^2\ll1.
$$
Then the leading order of the energy equation $(\ref{(1.22)})_4$ is
\begin{equation}
\t_t+p_+u_{1x}=\v(\f{\lambda(\t)\t_x}{v})_x. \label{(2.29)}
\end{equation}
By using the mass equation $(\ref{(1.22)})_1$ and
$v\approx\f{R\t}{p_+}$, we obtain the following nonlinear diffusion
equation
\begin{equation}
\t_t=\v(a(\t)\t_x)_x,~~~a(\t)=\f{9 p_+\lambda(\t)}{10\t}.
\label{(2.30)}
\end{equation}
From \cite{Atkinson-Peletier} and \cite{Duyn-Peletier}, we know that
the nonlinear diffusion equation (\ref{(2.30)}) admits a unique
self-similar solution $\hat{\T}(\eta),~\eta=\f{x}{\sqrt{\v(1+t)}}$
with the following boundary conditions
$$
\hat{\T}(-\i,t)=\t_-,~~\hat{\T}(+\i,t)=\t_+.
$$
Let $\delta=|\t_+-\t_-|$. $\hat{\T}(t,x)$ has the property
\begin{equation}
\hat{\T}_x(t,x)=\f{O(1)\delta^{CD}}{\sqrt{\v(1+t)}}e^{-\f{cx^2}{\v(1+t)}},~~~~~~{\rm
as}~~~x\rightarrow\pm\i, \label{(2.31)}
\end{equation}
with some positive constant $c$  depending only on  $\t_{\pm}$.

Now  the contact wave $(V^{CD},U^{CD},\T^{CD})(t,x)$ can be defined
by
\begin{equation}
\begin{array}{ll}
\di V^{CD}=\f{2}{3p_+}\hat{\T},\\
\di U^{CD}_1=u_{1+}+\f{2\v
a(\hat{\T})}{3p_+}\hat{\T}_x,~~~~U^{CD}_i=0,(i=2,3),\\[3mm]
~~~\T^{CD}=\hat{\T}+\f{2\v}{3p_+}\hat{\T}_t[\f43\mu(\hat\T)-\f35\l(\hat\T)].
\end{array}
 \label{(2.32)}
\end{equation}
Note that the contact wave $(V^{CD},U^{CD},\T^{CD})(t,x)$ satisfies
the following system
\begin{equation}
\left\{\begin{array}{llll}
\di V^{CD}_t-U^{CD}_{1x}=0,\\
\di U^{CD}_{1t}+P^{CD}_x=\f{4\v}{3}(\f{\mu(\T^{CD})}{V^{CD}}U^{CD}_{1x})_x+Q^{CD}_1,\\
\di U^{CD}_{it}=\v(\f{\mu(\T^{CD})}{V^{CD}}U^{CD}_{ix})_x,
i=2,3,\\
\di\T^{CD}_{t}+
P^{CD}U^{CD}_{1x}=\v(\f{\lambda(\T^{CD})}{V^{CD}}\T^{CD}_x)_x+\f{4\v}{3}\f{\mu(\T^{CD})}{V^{CD}}
(U^{CD}_{1x})^2\\
\quad\di+\v\sum_{i=2}^3\f{\mu(\T^{CD})}{V^{CD}}(U^{CD}_{ix})^2+Q^{CD}_2,
\end{array}
\right. \label{(2.33)}
\end{equation}
where
\begin{equation}
Q^{CD}_1=\f{4\v}{3}(\f{\mu(\T^{CD})-\mu(\hat\T)}{V^{CD}}U^{CD}_{1x})_x=\di
O(1)\delta^{CD}\v^{\f32}(1+t)^{-\f52}e^{-\f{cx^2}{\v(1+t)}},\label{(2.34)}
\end{equation}
\begin{equation}
\begin{array}{ll}
 Q^{CD}_2&\di=[\f{2\v}{3p_+}\hat{\T}_t(\f43\mu(\hat\T)-\f35\l(\hat\T))]_t
+\f{2\v}{3p_+V^{CD}}\hat{\T}_t[\f43\mu(\hat\T)-\f35\l(\hat\T)]U^{CD}_{1x}\\
&\di\quad +
\f{\v}{V^{CD}}(\lambda(\hat\T)\hat\T_x-\lambda(\T^{CD})\T^{CD}_x)_x-\f{4\v\mu(\T^{CD})}{3V^{CD}}
(U^{CD}_{1x})^2\\
&\di=O(1)\delta^{CD}\v(1+t)^{-2}e^{-\f{cx^2}{\v(1+t)}},
\end{array}
\label{(2.35)}
\end{equation}
with some positive constant $c>0$  depending only on $\t_\pm$.

\begin{remark} The viscous contact wave
$(V^{CD},U^{CD},\T^{CD})(t,x)$ for the Boltzmann equation
\eqref{(1.4)} defined in \eqref{(2.32)} is different from the one
used in \cite{Huang-Xin-Yang}. Here, this ansatz is chosen such that
the momentum equation is satisfied with a higher order error term.
This is also different from the compressible Navier-Stokes equations
where the ansatz satisfies the momentum equation exactly. But
similar to the compressible Navier-Stokes cases, the approximate
energy equation that the viscous contact wave satisfies is not in
the conservative form.
\end{remark}

From (\ref{(2.31)}), we have
\begin{equation}
\left\{
\begin{array}{l}
|\hat\T-\t_-|= O(1)\delta^{CD} e^{-\f{cx^2}{2\v(1+t)}},~~~~~{\rm if}~x<0,\\
|\hat\T-\t_+|= O(1)\delta^{CD} e^{-\f{cx^2}{2\v(1+t)}},~~~~~{\rm
if}~x>0.
\end{array}
\right. \label{(2.36)}
\end{equation}
Therefore,
\begin{equation}
|(V^{CD},U^{CD},\T^{CD})(t,x)-(v^{cd},u^{cd},\t^{cd})(t,x)|= O(1)\delta^{CD} e^{-\f{cx^2}{2\v(1+t)}}.\\
\label{(2.37)}
\end{equation}

\subsubsection{Rarefaction waves}
The construction of the $i$-rarefaction wave\\
$(V^{R_i},U^{R_i},\T^{R_i})(t,x)~(i=1,3)$ to the Boltzmann equation
is almost same as the one defined in  \eqref{(2.14)} for the
compressible Navier-Stokes equations in the previous section. By
setting $U^{R_i}_j=0$ for $i=1,3$ and $j=2,3$, all the properties of
the approximate rarefaction waves
$(V^{R_i},U_1^{R_i},\T^{R_i})(t,x)~(i=1,3)$  given in Lemma 2.3 will
also be used later.

\subsubsection{Superposition of rarefaction waves and contact discontinuity}

We now consider the superposition of two rarefaction waves and a
contact discontinuity. Set $(v_-,u_-,\t_-) \in$
$R_1$-$CD$-$R_3(v_+,u_+,\t_+)$. Then there exist uniquely two
intermediate states $(v_*, u_*,\t_*)$ and $(v^*, u^*,\t^*)$ such
that $(v_*, u_*,\t_*)\in R_1(v_-, u_-,\t_-)$, $(v_*, u_*,\t_*)\in
CD(v^*, u^*,\t^*)$  and $(v^*, u^*,\t^*)\in R_3(v_+,u_+,\t_+)$.

So the wave pattern $(\bar V,\bar U,\bar\T)(t,x)$ consisting of
1-rarefaction wave, 2-contact discontinuity and 3-rarefaction wave
as a  Riemann solution to  the Euler system \eqref{(1.16)} can be
defined by
\begin{equation}
\begin{array}{l}
 \left(\begin{array}{cc} \bar V\\ \bar U_1 \\
 \bar  \T
\end{array}
\right)(t, x)= \left(\begin{array}{cc}v^{r_1}+ v^{cd}+ v^{r_3}\\ u_1^{r_1}+ u_1^{cd}+ u_1^{r_3} \\
\t^{r_1}+ \t^{cd}+ \t^{r_3}
\end{array}
\right)(t, x) -\left(\begin{array}{cc} v_*+v^*\\ u_{1*}+u_1^*\\
\t_*+\t^*
\end{array}
\right),
\\[7mm]
\di \bar U_i=0,(i=2,3).
\end{array}
\label{(2.38)}
\end{equation}
where $(v^{r_1}, u_1^{r_1}, \t^{r_1} )(t,x)$ is the approximate
1-rarefaction wave defined in \eqref{(2.8)} with the right state
$(v_+, u_+, \t_+)$ replaced by $(v_*, u_{1*}, \theta_* )$, $(v^{cd},
u_1^{cd}, \t^{cd} )(t,x)$ is the contact discontinuity defined in
\eqref{(2.27)} with the states $(v_-, u_-, \t_-)$ and $(v_+, u_+,
\t_+)$ replaced by $(v_*, u_{*}, \theta_* )$ and $(v^*, u^*,
\theta^* )$ respectively, and $(v^{r_3}, u_1^{r_3}, \t^{r_3})(t,x)$
is the 3-rarefaction wave defined in \eqref{(2.8)} with the left
state $(v_-, u_-, \t_-)$ replaced by $(v^*, u_1^*, \theta^* )$.

Correspondingly, the approximate superposition wave $(V,U,\T)(t,x)$
can be defined by
\begin{equation}
\begin{array}{l}
 \left(\begin{array}{cc} V\\ U_1 \\
  \T
\end{array}
\right)(t, x)= \left(\begin{array}{cc}  V^{R_1}+ V^{CD}+ V^{R_3}\\ U_1^{R_1}+ U_1^{CD}+ U_1^{R_3} \\
\T^{R_1}+ \T^{CD}+ \T^{R_3}
\end{array}
\right)(t, x) -\left(\begin{array}{cc} v_*+v^*\\ u_{1*}+u_1^{*}\\
\t_*+\t^*
\end{array}
\right), \\[7mm]
\di U_i=0,(i=2,3).
\end{array}
\label{(2.39)}
\end{equation}
where $(V^{R_1}, U_1^{R_1}, \Theta^{R_1} )(t,x)$ is the
1-rarefaction wave defined in \eqref{(2.12)} with the right state
$(v_+, u_+, \t_+)$ replaced by $(v_*, u_{1*}, \theta_* )$, $(V^{CD},
U_1^{CD}, \Theta^{CD} )(t,x)$ is the viscous contact wave   defined
in \eqref{(2.32)} with the states $(v_-, u_-, \t_-)$ and $(v_+, u_+,
\t_+)$ replaced by $(v_*, u_*, \theta_* )$ and $(v^*, u^*, \theta^*
)$ respectively, and $(V^{R_3}, U_1^{R_3}, \Theta^{R_3} )(t,x)$ is
the approximate 3-rarefaction wave defined in \eqref{(2.12)} with
the left state $(v_-, u_-, \t_-)$ replaced by $(v^*, u_1^*, \theta^*
)$.

Thus, from the construction of the contact wave and Lemma \ref{Lemma
2.3}, we have the following relation between the approximate wave
pattern $(V,U,\T)(t,x)$ of the Boltzmann equation and the exact
inviscid wave pattern $(\bar V,\bar U,\bar\T)(t,x)$ to the Euler
equations
\begin{equation}
\begin{array}{ll}
\di |(V,U,\T)(t,x)-(\bar V,\bar U,\bar\T)(t,x)|\\
\di \quad\leq
\f{C}{t}[\s\ln(1+t+t_0)+\s|\ln\s|+t_0]+C\d^{CD}e^{-\f{cx^2}{\v(1+t)}},
\end{array}\label{(2.40)}
\end{equation}
with $t_0=\v^{\f15}$ and $\s=\v^{\f25}$.

Then we have
\begin{equation}
\left\{
\begin{array}{l}
  V_t-U _{1x} = 0,  \\
    U_{1t}+P_x
    = \v (\frac{\mu(\T)U_{1x}}{V}) _{x}+Q_1,\\
U_{it}=\v(\f{\mu(\T)U_{ix}}{V})_x,~~i=2,3,
    \\
  \T _t+P U_{1x}
    =\v( \frac{\l(\T)\T_{x}}{ V})_x + \v   \frac{ \mu(\T) U_{1x}^2}{ V}
     +Q_2,
\end{array}
\right.\label{(2.41)}
\end{equation}
 where $ P =p( V , \T  )$ and
$$\begin{array}{ll}
   \di Q_1&\di=(P-P^{R_1}-P^{CD}-P^{R_3})_x-\v(\f{\mu(\T)U_{1x}}{V}-\f{\mu(\T^{CD})U^{CD}_{1x}}{V^{CD}})_x-Q^{CD}_1,\\
  \di Q_2&\di= (PU_{1x}-P^{R_1}U^{R_1}_{1x}-P^{CD}U^{CD}_{1x}-P^{R_3}U^{R_3}_{1x})-\v(\f{\l(\T)\T_x}{V}-\f{\l(\T^{CD})\T^{CD}_x}{V^{CD}})_x\\
  &\di\qquad-\v(\f{\mu(\T)U_{1x}^2}{V}-\f{\mu(\T^{CD})(U^{CD}_{1x})^2}{ V^{CD}})-Q_2^{CD}.
\end{array}$$
Direct computation yields
\begin{equation}
\begin{array}{lll}
   \di Q_1&=&\di O(1)\Big[|(V^{R_1}_x,\T^{R_1}_x)||(V^{CD}-v_*,\T^{CD}-\t_*,V^{R_3}-v^*,\T^{R_3}-\t^*)|\\
&&\di
+|(V^{R_3}_x,\T^{R_3}_x)||(V^{R_1}-v_*,\T^{R_1}-\t_*,V^{CD}-v^*,\T^{CD}-\t^*)|\\
&&\di+|(V^{CD}_x,\T^{CD}_x,U^{CD}_{xx})||(V^{R_1}-v_*,\T^{R_1}-\t_*,V^{R_3}-v^*,\T^{R_3}-\t^*)|\\
&&\di
+\v|(U^{CD}_x,V^{CD}_x)||(U^{R_1}_x,V^{R_1}_x,U^{R_3}_x,V^{R_3}_x)|
+\v|(U^{R_1}_x,V^{R_1}_x)||(U^{R_3}_x,V^{R_3}_x)|\Big]\\
&&\di+O(1)\v\Big[|U^{R_1}_{xx}|+|U^{R_3}_{xx}|+|U^{R_1}_x||V^{R_1}_x|+|U^{R_3}_x||V^{R_3}_x|\Big]+|Q^{CD}_1|\\
& :=&\di Q_{11}+Q_{12}+|Q^{CD}_1|,
\end{array}
\label{(2.42)}
\end{equation}
and
\begin{equation}\begin{array}{lll}
   \di Q_2&=&\di O(1)\Big[|U^{R_1}_x||(V^{CD}-v_*,\T^{CD}-\t_*,V^{R_3}-v^*,\T^{R_3}-\t^*)|\\
&&\di
+|U^{R_3}_x||(V^{R_1}-v_*,\T^{R_1}-\t_*,V^{CD}-v^*,\T^{CD}-\t^*)|\\
&&\di+|(U^{CD}_{x},V^{CD}_x,\T^{CD}_x)||(V^{R_1}-v_*,\T^{R_1}-\t_*,V^{R_3}-v^*,\T^{R_3}-\t^*)|\\
&&\di
+\v|(U^{CD}_x,V^{CD}_x,\T^{CD}_x)||(U^{R_1}_x,V^{R_1}_x,\T^{R_1}_x,U^{R_3}_x,V^{R_3}_x,\T^{R_1}_x)|\\
&&\di
+\v|(U^{R_1}_x,V^{R_1}_x,\T^{R_1}_x)||(U^{R_3}_x,V^{R_3}_x,\T^{R_3}_x)|\Big]\\
&&\di +O(1)\v\Big[|\T^{R_1}_{xx}|+|\T^{R_3}_{xx}|+|(U^{R_1}_x,V^{R_1}_x,\T^{R_1}_x,U^{R_3}_x,V^{R_3}_x,\T^{R_3}_x)|^2\Big]+|Q_2^{CD}|\\
& :=&\di Q_{21}+Q_{22}+|Q_2^{CD}|.
\end{array}\label{(2.43)}
\end{equation}
Here, $Q_{11}$ and $Q_{21}$ represent the interaction of waves in
different families, $Q_{12}$ and $Q_{22}$ represent the error terms
coming from the approximate  rarefaction wave profiles, and
$Q_i^{CD}(i=1,2)$ are the error terms defined in \eqref{(2.34)} and
\eqref{(2.35)} due to the viscous contact wave.

Similar to the compressible Navier-Stokes equations case, for the
interaction terms, we have
\begin{equation}
|(Q_{11},Q_{21})|=O(1)e^{-C\v^{-1/5}}e^{-\f{C(|x|+t+t_0)}{\v^{2/5}}},\label{(2.44)}
\end{equation}
for some positive constants $C$.

We now reformulate the system by introducing  a scaling for the
independent variables. Set
$$
y=\f x\v,~~\tau=\f t\v
$$
as in the previous section for the compressible Navier-Stokes
equations. We also use the notations $(v,u,\t)(\tau,y), \mb
G(\tau,y,\x), \Pi_1(\tau,y,\x)$ and $(V,U,\T)(\tau,y)$ in the scaled
independent variables. Set the perturbation around the composite
wave $(V,U,\T)(\tau,y)$ by
$$
(\phi,\psi,\zeta)(\tau,y)=(v-V,u-U,\t-\T)(\tau,y).
$$
Under this scaling, the hydrodynamic limit problem is reduced to a
time asymptotic stability problem of the composite wave to the
Boltzmann equation. Notice that the hydrodynamic limit proved here
is global in time compared to  the case on shock profile studied in
\cite{Yu} which is locally in time.
%However, we do not know whether there exists
%some  appropriate scaling for the shock profile so that this method
%can be applied.

%With the above scaling, the proof of Theorem 2.1 will be given by
%energy method as \cite{Huang-Li-Matsumura} for  the scaled
%perturbation $(\p,\psi,\z)(\tau,y)$ and $\mb{G}(\tau,y,\xi)$.

From (\ref{(1.22)}) and (\ref{(2.42)}), we have the following system
for the perturbation $(\phi,\psi,\z)$
\begin{equation}
\left\{
\begin{array}{ll}
\di \phi_{\tau}-\psi_{1y}=0, \\
\di \psi_{1\tau}+(p-P)_{y}=\f43(\frac {\mu(\t)u_{1y}}v-\f{\mu(\T)U_{1y}}{V})_{y}-\int\xi_1^2\Pi_{1y}d\xi-\v Q_1,\\
\di \psi_{i\tau}=(\frac {\mu(\t)u_{i1y}}v-\f{\mu(\T)U_{iy}}{V})_{y}-\int\xi_1\xi_i\Pi_{1y}d\xi,~~i=2,3,\\
\di \z_{\tau}+
(pu_{1y}-PU_{1y})=(\frac{\l(\t)\theta_{y}}{v}-\f{\l(\T)\T_y}{V})_y+\f43(\frac
{\mu(\t)u_{1y}^2}v-\f{\mu(\T)U_{1y}^2}{V})\\
\di\qquad+\sum_{i=2}^3\frac
{\mu(\t)u_{iy}^2}v+\sum_{i=1}^3u_i\int\xi_1\xi_i\Pi
_{1y}d\xi-\int\xi_1\f{|\x|^2}{2}\Pi_{1y}d\xi-\v U_1 Q_1-\v
Q_2,\\
\end{array}
\right.
 \label{(2.45)}
\end{equation}
where the error terms $Q_i~(i=1,2)$ are given in (\ref{(2.42)}) and
(\ref{(2.43)}) respectively.

We now derive the equation for the  non-fluid component $\mb{G}
(\tau,y,\xi)$ in the scaled independent variables. From
(\ref{(1.18)}), we have
\begin{equation}
\di \mb{G} _{\tau}-\f{u_1}{v}\mb{G}
_y+\f{1}{v}\mb{P}_1(\xi_1\mb{M}_y) +\f{1}{v}\mb{P}_1(\xi_1\mb{G}
_y)=\mb{L}_\mb{M}\mb{G} +Q(\mb{G} ,\mb{G} ).\label{(2.46)}
\end{equation}
Thus, we obtain
\begin{equation}
\mb{G} =\f{1}{v}\mb{L}^{-1}_\mb{M}[ \mb{P}_1(\xi_1
\mb{M}_y)]+\Pi_1,\label{(2.47)}
\end{equation}
and
\begin{equation}
\Pi_1(\tau,y,\xi)=\mb{L}_\mb{M}^{-1}[\mb{G} _{\tau}-\f{u_1}v\mb{G}
_y+\f{1}{v}\mb{P}_1(\xi_1\mb{G} _y) -Q(\mb{G} ,\mb{G}
)].\label{(2.48)}
\end{equation}
Let
\begin{equation}
\mb{G}
_0(\tau,y,\xi)=\f{3}{2v\t}\mb{L}^{-1}_\mb{M}\{\mb{P}_1[\xi_1(\f{|\xi-u|^2}{2\t}{\T}_y+\xi\cdot{U}_{y})\mb{M}]\},
\label{(2.49)}
\end{equation}
and
\begin{equation}
\mb{G} _1(\tau,y,\xi)=\mb{G} (\tau,y,\xi)-\mb{G} _0(\tau,y,\xi).
\label{(2.50)}
\end{equation}
Then $\mb{G} _1(\tau,y,\xi)$ satisfies
\begin{equation}
\begin{array}{ll}
\mb{G} _{1\tau}-\mb{L}_\mb{M}\mb{G}
_1=&\di-\f{3}{2v\t}\mb{P}_1[\xi_1
(\f{|\xi-u|^2}{2\t}\z_y+\xi\cdot\psi_y)\mb{M}]\\
&\di+\f{u_1}{v}\mb{G} _y-\f{1}{v}\mb{P}_1(\xi_1\mb{G} _y)+Q(\mb{G}
,\mb{G} )-\mb{G} _{0\tau}.
\end{array}
\label{(2.51)}
\end{equation}
Notice that in \eqref{(2.50)} and \eqref{(2.51)}, $\mb{G} _0$ is
subtracted from $\mb{G} $ because $\|(\T_y,U_y)\|^2\sim
(1+\v^{\f12}\tau)^{-1/2}$ is not integrable globally in $\tau$.

Finally, from (\ref{(1.15)}) and the scaling transformation
\eqref{(2.23)}, we have
\begin{equation}
\di f_{\tau}-\f{u_1}{v}f_y+\f{\xi_1}{v}f_y=Q(f,f). \label{(2.52)}
\end{equation}

The estimation on the fluid and non-fluid components governed by the
above systems will be given in the last section.

\subsubsection{Main result to Boltzmann equation}

With  the above preparation, we are now ready to  state the main
result on the Boltzmann equation as follows.

\begin{theorem}\label{Theorem 2.2} Given a Riemann solution $(\bar V,\bar
U,\bar\T)(t,x)$ defined in \eqref{(2.38)}, which is a superposition
of two rarefaction waves and a contact discontinuity to the Euler
system (\ref{(1.16)}), there exist small positive constants
$\delta_0$, $\v_0$ and a global Maxwellian
$\mb{M}_\star=\mb{M}_{[v_\star,u_\star,\t_\star]}$, such that if the
contact wave strength $\d^{CD}\leq \delta_0$, and the Knudsen number
$\v\leq \v_0$,
 then the
Boltzmann equation (\ref{(1.4)}) admits a unique global solution
$f^\v(t,x,\xi)$ satisfying
\begin{equation}
\sup_{(t,x)\in\Sigma_h}\|f^\v(t,x,\xi)-\mb{M}_{[\bar V,\bar
U,\bar\T]}(t,x,\xi)\|_{L_\xi^2(\f{1}{\sqrt{\mb{M}_\star}})}\leq C_h~
\v^\f15,\qquad \forall h>0,\label{(2.53)}
\end{equation}
where $\Sigma_h=\{(t,x)|t\geq h, \f{x}{\sqrt{1+t}}\geq h
\v^{\a},0<\a<\f12\}$, the norm
$\|\cdot\|_{L_\xi^2(\f{1}{\sqrt{\mb{M}_\star}})}$ is
 $\|\f{\cdot}{\sqrt{\mb{M}_\star}}\|_{L_\xi^2(\mb{R}^3)}$ and the positive constant $C_h$ depends only on  $h$ but
is independent of $\v$.
\end{theorem}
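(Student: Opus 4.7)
The plan is to work in the scaled Lagrangian variables $(\tau,y)$ introduced in \eqref{(2.23)} and prove a global-in-$\tau$ a priori estimate on the perturbation $(\phi,\psi,\zeta)(\tau,y)$ together with the non-fluid remainder $\mb{G}_1(\tau,y,\xi)$, and then combine this with the already established bound \eqref{(2.40)} relating the ansatz $(V,U,\T)$ and the exact Riemann solution $(\bar V,\bar U,\bar\T)$. The local existence of $f^\v$ follows from standard theory once one writes $f=\mb{M}+\mb{G}$, so the whole issue is to propagate smallness, which I would do by a continuity argument starting from the \emph{a priori assumption}
\begin{equation*}
N(\tau)^2:=\sup_{0\le\tau'\le\tau}\Big(\|(\phi,\psi,\z)(\tau')\|_{H^1}^2+\sum_{|\a|\le 1}\|\partial^\a \mb{G}_1(\tau')/\sqrt{\mb{M}_\star}\|_{L^2_{y,\x}}^2\Big)\le \chi^2,
\end{equation*}
with $\chi$ small, and closing the estimate independently of $\tau$ and of $\v$.

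First I would carry out the fluid-side energy estimate on system \eqref{(2.45)}. Multiplying the three macroscopic equations by $\psi$, $\zeta/\t$ (or rather the standard entropy-type multipliers from the Kawashima--Matsumura--Nishihara framework adapted to the Boltzmann setting, as in Huang--Wang--Yang), I expect to obtain an identity of the form
\begin{equation*}
\frac{d}{d\tau}\!\int\!\eta(\phi,\psi,\z)\,dy+c\!\int\!U_{1y}^{R_1+R_3}\big(\phi^2+\psi^2+\z^2\big)dy+c\!\int\!\Big(\frac{\z_y^2}{\t}+\frac{|\psi_y|^2}{\t}\Big)dy\le I+J+K,
\end{equation*}
where $I$ collects the cross-wave interaction terms $Q_{11},Q_{21}$ controlled by \eqref{(2.44)}, $J$ collects the contact-wave residuals $Q_1^{CD},Q_2^{CD}$ which decay like $\delta^{CD}\v(1+t)^{-2}$ and can be handled by smallness of $\delta^{CD}$ plus a contact-wave diffusion weight (mimicking the argument of Huang--Matsumura--Xin), and $K$ collects the coupling to the non-fluid part through the $\int\xi_1^\bullet\Pi_{1y}d\xi$ terms. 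The positivity $U_{1y}^{R_i}>0$ from Lemma \ref{Lemma 2.3} is essential to absorb the rarefaction-generated quadratic lower-order terms, while the contact-wave diffusion gives the needed $\zeta_y^2,\psi_y^2$ control near $y=0$ at the price of factors of $\delta^{CD}$.

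Next I would treat the microscopic equation \eqref{(2.51)} for $\mb{G}_1$. Taking the inner product with $\mb{G}_1/\mb{M}_\star$ and using the coercivity of $\mb{L}_\mb{M}$ on $\mathfrak{N}^\bot$, namely $\langle\mb{G}_1,\mb{L}_\mb{M}\mb{G}_1\rangle\le-\sigma_0\langle\nu(|\x|)\mb{G}_1,\mb{G}_1\rangle$, yields dissipation of $\mb{G}_1$ in a weighted $L^2_\x$ norm, at the cost of source terms quadratic in $(\psi_y,\z_y)$ and in the ansatz derivatives $(U_y,\T_y)$ that are fortunately integrable once the $\mb{G}_0$-subtraction of \eqref{(2.49)}--\eqref{(2.50)} is in place. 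Higher-order ($\partial_y$) versions of both estimates are then needed so that a Sobolev embedding $H^1(\mathbf{R})\hookrightarrow L^\infty$ gives a pointwise bound. Combining all these, the standard closing argument produces $N(\tau)\le C\chi/2$, hence global existence and a uniform bound $N(\tau)\le C$ for all $\tau>0$.

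The last step is purely algebraic: from the $H^1\hookrightarrow L^\infty$ bound on $(\phi,\psi,\z)$ and on $\mb{G}_1$, together with the fact that $\mb{G}_0$ is $O(\v(|U_y|+|\T_y|))=O(\v^{3/5})$ in the relevant norm, one recovers
\begin{equation*}
\|f^\v(t,x,\cdot)-\mb{M}_{[V,U,\T]}(t,x,\cdot)\|_{L^2_\x(1/\sqrt{\mb{M}_\star})}=O(\v^{1/5})
\end{equation*}
on $\Sigma_h$, and then \eqref{(2.40)} gives $|(V,U,\T)-(\bar V,\bar U,\bar\T)|\le C_h[\sigma|\ln\s|/t+t_0/t]+C e^{-ch^2/\v^{1-2\a}}=O_h(\v^{1/5})$ with the choices $t_0=\v^{1/5}$, $\s=\v^{2/5}$. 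I expect the main obstacle to be the wave-interaction estimate, specifically controlling the coupling between the spreading rarefaction fans and the parabolically-scaled contact layer on the intermediate zone $\Omega_{CD}$: the exponential smallness \eqref{(2.44)} of $Q_{11},Q_{21}$ is critical, and it is precisely this which dictates the scalings $t_0=\v^{1/5},\ \s=\v^{2/5}$ and forces the convergence rate to be $\v^{1/5}$ rather than better. A secondary difficulty is that the microscopic source $\mb{G}_0$ is not globally integrable in $\tau$, which is why the splitting $\mb{G}=\mb{G}_0+\mb{G}_1$ must be used and the energy estimate performed on $\mb{G}_1$ only.
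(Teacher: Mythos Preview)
Your overall strategy matches the paper's: work in the scaled variables $(\tau,y)$, run an energy method on the fluid perturbation $(\phi,\psi,\zeta)$ coupled to a coercivity estimate on the non-fluid part, close by continuity, and finish with Sobolev embedding plus \eqref{(2.40)}. The splitting $\mb{G}=\mb{G}_0+\mb{G}_1$ and the use of the heat-kernel weight for the contact-wave term are also exactly what the paper does.

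There is, however, a genuine gap in your a priori functional. Your $N(\tau)$ controls only $\partial^\alpha\mb{G}_1$ for $|\alpha|\le 1$, and this is not enough to close. The fluid system \eqref{(2.45)} contains the terms $\int\xi_1^k\Pi_{1y}\,d\xi$, and $\Pi_1$ itself already involves $\mb{G}_\tau,\mb{G}_y$ via \eqref{(2.48)}. In the lowest-order estimate you can integrate by parts once and get away with first derivatives of $\mb{G}$, but in the $\phi_y$ estimate and in the $(\psi_{yy},\zeta_{yy})$ estimate you must bound $\int\bigl|\int\xi_1^k\Pi_{1y}\,d\xi\bigr|^2\,dy$ directly, and this forces control of $\partial^\alpha\mb{G}$ for $|\alpha|=2$. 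Your vague remark ``higher-order $\partial_y$ versions of both estimates'' does not cover this: the paper's functional $\mathcal{N}(\tau)$ (see \eqref{(4.11)}) explicitly includes $\sum_{|\alpha|=2}\int\!\!\int |\partial^\alpha f|^2/\mb{M}_\star$, and the top-order estimate is carried out on $\partial^\alpha f$ directly (not on $\mb{G}_1$ or on the macroscopic variables separately), because at this level the clean split into fluid and non-fluid pieces is no longer convenient. Without this second-order tier the loop does not close.

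A smaller point: the continuity argument should not merely give $N(\tau)\le C$; what the paper actually proves is $\mathcal{N}(\tau)\le C\v^{2/5}$, and it is this quantitative bound that, via $H^1\hookrightarrow L^\infty$, yields the $\v^{1/5}$ rate in \eqref{(2.53)}. Your write-up obtains boundedness and then asserts $O(\v^{1/5})$ without explaining where the $\v$-power comes from; in the paper it enters through the error terms $Q_{12},Q_{22},Q^{CD}_i$ and the choices $t_0=\v^{1/5}$, $\sigma=\v^{2/5}$, which make the forcing in the energy inequalities of size $\v^{2/5}$.
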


\begin{remark} Theorem \ref{Theorem 2.2} shows that, away from the initial time
$t=0$ and the contact discontinuity located at $x=0$ with the
expansion rate $\f{x^2}{\v(1+t)}$, for Knudsen number $\v<\v_0$,
there exists a unique global solution $f^\v(t,x,\xi)$ of the
Boltzmann equation (\ref{(1.4)}) which tends to the Maxwellian
$\mb{M}_{[\bar V,\bar U,\bar\T]}(t,x,\xi)$ with $(\bar V,\bar
U,\bar\T)(t,x)$ being the Riemann solution to the Euler equation
with the combination of two rarefaction waves and a contact
discontinuity when $\v\rightarrow 0$. Moreover, a uniform
convergence rate $\v^{\f{1}{5}}$ in the norm
$L_\xi^2(\f{1}{\sqrt{\mb{M}_\star}})$ holds on the set $\Sigma_h$
for any fixed $h>0$.
\end{remark}

\begin{remark} Theorem \ref{Theorem 2.2} holds uniformly on the $(t,x)\in
 \Sigma_h$ for any $h>0$ if the contact wave strength $\d^{CD}$ and Knudsen number $\v$ are suitably small.
But if we restrict the problem to  the set $\Sigma_h \cap \{t\leq
T\}$ for any fixed $T>0$, then we don't need the smallness condition
on the contact wave strength $\d^{CD}$ by using Gronwall inequality
to get a time dependent estimate rather than  the uniform estimation
in time.
\end{remark}

\noindent\textbf{Notations:}  Throughout this paper, the positive
generic constants which are independent of $T,\v$ are denoted by
$c$, $C$ or $C_0$. For function spaces, $H^l(\mathbf{R})$ denotes
the $l$-th order Sobolev space with its norm
$$
\|f\|_l=(\sum^l_{j=0}\|\partial^j_yf\|^2)^\frac{1}{2}, \quad {\rm
and}~\|\cdot\|:=\|\cdot\|_{L^2(dy)},
$$
where $L^2(dz)$ means the $L^2$ integral over $\mathbf{R}$ with
respect to the Lebesgue measure $dz$, and $z=x$ or $y$.

\section{Proof of Theorem \ref{Theorem 2.1}: Zero dissipation limit of Navier-Stokes equations}
\setcounter{equation}{0}

We will prove Theorem \ref{Theorem 2.1} about
 the fluid dynamic limit for the compressible Navier-Stokes
equations to the Riemann solution of the Euler equations
 in this section. The proof is based on
the energy estimates on the perturbation in the scaled independent
variables. In fact, to prove Theorem \ref{Theorem 2.1}, it is
sufficient to prove the following theorem.

\begin{theorem}\label{Theorem 3.1} There exist small positive constants
$\delta_1$ and $\v_1$ such that if the initial values and the
contact wave strength $\delta^{CD}$ satisfy
\begin{equation}
\mathcal{N}(\tau)|_{\tau=0}+\delta^{CD} \le \delta_1, \label{(4.9)}
\end{equation}
and the Knudsen number $\v$ satisfies $\v\leq \v_1$, then the
problem (\ref{(2.24)}) admits a unique global solution
$(v^\v,u^\v,\t^\v)(\tau,y)$ satisfying
\begin{equation}
\begin{array}{l}
\di
\sup_{\tau,y}|(v^\v,u^\v,\t^\v)(\tau,y)-(V,U,\Theta)(\tau,y)|\leq
C\v^{\f15}.\\
\end{array}
\label{(4.10)}
\end{equation}
Here $\mathcal{N}(\tau)$ is defined by \eqref{(3.3+)} below.
\end{theorem}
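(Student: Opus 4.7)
The plan is to prove Theorem \ref{Theorem 3.1} by the standard continuity argument: local existence of $(\phi,\psi,\zeta)$ in the scaled variables, a uniform a priori energy estimate under the smallness assumption $\mathcal{N}(\tau)+\delta^{CD}\le \chi\ll 1$, and then closure of the bootstrap by choosing $\chi$ and $\varepsilon_1$ small enough. The key observation is that in the scaled variables $(\tau,y)=(t/\varepsilon,x/\varepsilon)$ the equations \eqref{(2.24)} for $(\phi,\psi,\zeta)$ look like a Navier--Stokes perturbation equation with unit viscosity, driven by two types of small inhomogeneities: (i) interaction error terms $\varepsilon Q_1,\varepsilon Q_2$ which, by the choice $t_0=\varepsilon^{1/5}$, $\sigma=\varepsilon^{2/5}$ together with \eqref{(2.17)}, \eqref{(2.21)} and the contact-wave bounds \eqref{(2.5)}, \eqref{(2.7)}, are exponentially or algebraically integrable; and (ii) terms involving the derivatives of the background profile $(V,U,\Theta)$, which are not small pointwise but enjoy good $L^p$ and $L^1_\tau$ integrability from Lemma \ref{Lemma 2.3} and the heat-kernel structure of the contact wave.

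The main energy estimate will be obtained by the relative-entropy/mechanical-energy method of Huang--Matsumura--Xin. Multiplying $\eqref{(2.24)}_2$ by $\psi$ and $\eqref{(2.24)}_3$ by $\zeta/\Theta$, and using the convexity of the physical entropy $-s$ together with the expression $p-P\approx P_V\phi+P_\Theta\zeta$, gives
\begin{equation*}
\frac{d}{d\tau}\!\int\!\!\Big(\tfrac{\psi^2}{2}+R\Theta\,\Phi(\tfrac{v}{V})+\tfrac{R}{\gamma-1}\Theta\,\Phi(\tfrac{\theta}{\Theta})\Big)dy
+c\!\!\int\!\!\Big(\tfrac{\psi_y^2}{v}+\nu\tfrac{\zeta_y^2}{v\Theta}\Big)dy
\le \mathcal{R}(\tau),
\end{equation*}
where $\Phi(s)=s-1-\log s$ is nonnegative and $\mathcal{R}(\tau)$ collects (a) good dissipative terms proportional to $U^{R_i}_y>0$ that absorb $\phi^2 U^{R_i}_y$ contributions, (b) bad terms from the contact-wave derivatives $U^{CD}_y,\Theta^{CD}_y$, and (c) the forcing $\varepsilon Q_1,\varepsilon Q_2$. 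The bad contact-wave contributions are controlled by using $|\Theta^{CD}_y|,|U^{CD}_y|\!\lesssim \delta^{CD}[\varepsilon(1+t)]^{-1/2}e^{-c x^2/\varepsilon(1+t)}$, combining a weighted Poincaré/heat-kernel bound with the smallness of $\delta^{CD}$. The interaction parts $\varepsilon|Q_{11}|+\varepsilon|Q_{21}|$ contribute $O(1)\varepsilon\,e^{-C\varepsilon^{-1/5}}$ by \eqref{(2.21)}, and the pure rarefaction errors $\varepsilon|Q_{12}|+\varepsilon|Q_{22}|$ are integrable in $\tau$ after converting back to the original variables using $\|(V^R,U^R,\Theta^R)_{xx}\|_{L^1(dx)}\lesssim (t+t_0)^{-1}$.

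Next I would do the first-order estimate for $(\phi_y,\psi_y,\zeta_y)$ by differentiating \eqref{(2.24)} in $y$, testing against $(\phi_y,\psi_y,\zeta_y)$, and using the parabolic regularization from the viscosity/heat-conduction terms in the scaled system. The $\phi_y$ estimate is recovered using the momentum equation to write $\phi_y$ in terms of $\psi_\tau$ plus lower-order terms (the usual trick of testing $\eqref{(2.24)}_2$ against $\phi_y/v$ to create a dissipation of $\phi_y$). Combining these gives a bound of the form
\begin{equation*}
\mathcal{N}(\tau)^2
\le C(\mathcal{N}(0)^2+\delta^{CD})+C\,\varepsilon^{2/5}+C\chi\,\mathcal{N}(\tau)^2,
\end{equation*}
which, choosing $\chi$ small and invoking continuity in $\tau$, closes the bootstrap and yields global existence together with $\mathcal{N}(\tau)\le C\varepsilon^{1/5}$; Sobolev embedding then gives $\|(\phi,\psi,\zeta)\|_{L^\infty_y}\le C\varepsilon^{1/5}$, proving \eqref{(4.10)}. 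To deduce Theorem \ref{Theorem 2.1}, one combines this with \eqref{(2.17)} to compare $(V,U,\Theta)$ with $(\bar V,\bar U,\bar\Theta)$ on $\Sigma_h$, noting that $\frac{1}{t}[\sigma\ln(1+t+t_0)+\sigma|\ln\sigma|+t_0]+\delta^{CD}e^{-cx^2/\varepsilon(1+t)}\lesssim \varepsilon^{1/5}$ on $\Sigma_h$.

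The main obstacle, as in \cite{Huang-Xin-Yang}, is the mixing of three heterogeneous scales: the rarefaction-wave layer of width $\sigma=\varepsilon^{2/5}$, the contact-wave Gaussian layer of width $\sqrt{\varepsilon(1+t)}$, and the global viscous perturbation. Rarefaction-wave derivatives are not small pointwise, so one must exploit the sign $U^{R_i}_y>0$ as a dissipation (rather than treating it as a forcing); the contact-wave error $Q^{CD}$ is not integrable without the smallness of $\delta^{CD}$; and the wave-wave interaction terms $Q_{11},Q_{21}$ would ruin the rate unless the shift $t_0=\varepsilon^{1/5}$ is introduced to separate the waves by a distance $\sim \varepsilon^{1/5}\gg\sigma$, producing the gain $e^{-C\varepsilon^{-1/5}}$. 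Balancing these three mechanisms is exactly what dictates the choices $t_0=\varepsilon^{1/5}$, $\sigma=\varepsilon^{2/5}$ and leads to the rate $\varepsilon^{1/5}$ in \eqref{(2.25)}.
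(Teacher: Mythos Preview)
Your outline captures the overall architecture correctly---relative-entropy energy at zeroth order, the $\tilde v=v/V$ trick for $\phi_y$, and higher-order estimates via testing against $-\psi_{yy},-\zeta_{yy}$---and your discussion of the interaction and rarefaction error terms $Q_{11},Q_{12},Q_{21},Q_{22}$ matches the paper's treatment. But there is a real gap in your handling of the contact-wave contribution.

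The dangerous term left over after the zeroth- and first-order estimates is
\[
\delta^{CD}\int_0^\tau\!\!\int_{\mathbf R}\varepsilon(1+\varepsilon\tau)^{-1}e^{-\frac{C_0\varepsilon y^2}{1+\varepsilon\tau}}|(\phi,\zeta)|^2\,dy\,d\tau,
\]
cf.\ \eqref{(3.31)}. You propose to dispatch it by ``a weighted Poincar\'e/heat-kernel bound with the smallness of $\delta^{CD}$''. That is not enough: the time weight $\varepsilon(1+\varepsilon\tau)^{-1}$ is only logarithmically integrable in $\tau$, so smallness of $\delta^{CD}$ alone cannot close a uniform-in-$\tau$ estimate. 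The paper needs a genuinely additional ingredient (Lemmas~\ref{Lemma 3.2} and~\ref{Lemma 3.3}): one introduces the antiderivative-type weight $g_a(\tau,y)=\varepsilon^{1/2}(1+\varepsilon\tau)^{-1/2}\!\int_{-\infty}^{y}e^{-a\varepsilon\eta^2/(1+\varepsilon\tau)}d\eta$ (and a related $G_b$), but applies it not to $(\phi,\zeta)$ directly---rather to the two specific linear combinations $R\zeta-P\phi$ and $h=\tfrac{R}{\gamma-1}\zeta+P\phi$, chosen so that their $\tau$-derivatives, computed from \eqref{(2.24)}, produce only terms already controlled by the dissipation $\int_0^\tau\|(\phi_y,\psi_y,\zeta_y)\|^2\,d\tau$ plus acceptable remainders. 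This ``diagonalization + heat-kernel antiderivative'' step (from \cite{Huang-Li-Matsumura}) is the crux of the contact-wave analysis and is missing from your sketch.

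A smaller point: the final closing in the paper is not by absorbing a $C\chi\,\mathcal N(\tau)$ term on the right, but by Gronwall against the integrable weight $(\tau+\tau_0)^{-3/2}$, which arises from $\|(V_y,U_y,\Theta_y)\|_{L^\infty}^2\lesssim (\tau+\tau_0)^{-2}$ and from the output of Lemma~\ref{Lemma 3.3}. Those profile-derivative terms carry no smallness factor and cannot be absorbed as you indicate.
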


We will focus on the reformulated system (\ref{(2.24)}). Since the
local existence of the solution to (\ref{(2.24)}) is standard, to
prove the global existence, we only need to close the following a
priori estimate by the continuity argument
\begin{equation}
\begin{array}{ll}
\di \mathcal{N}(\tau)=&\di\sup_{0\leq \tau^\prime\leq \tau}
\|(\p,\psi,\z)(\tau^\prime,\cdot)\|_1^2\leq \chi^2,
\end{array}
\label{(3.3+)}
\end{equation}
where $\chi$ is a small positive constant depending only on the
initial values and the strength of the contact wave. And the proof
of the above a priori estimate is given by the following energy
estimations.

Firstly, multiplying $\eqref{(2.24)}_2$ by $\psi$ yields
\begin{equation}
\begin{array}{ll}
{\displaystyle(\frac12\psi^2)_\tau-(p-P)\psi_{y}+(\frac{u_{y}}{v}
-\frac{U_y}{V})\psi_{y}=-\v
Q_1\psi+\left[(\frac{u_y}{v}-\frac{U_y}{V})\psi-(p-P)\psi\right]_y.}\end{array}\label{(3.1)}
\end{equation}
Since $p-P=R\T(\frac1v-\frac1{V})+\frac{R\zeta}{v}$ and
$\phi_\tau=\psi_y$, we get
\begin{equation}
\begin{array}{l}
\di(\frac12\psi^2)_\tau-R\T(\frac{1}{v}-\frac{1}{V})\phi_\tau-\frac{R}{v}\zeta\psi_{y}
+\frac{\psi^2_y}{v}\\
\di=-(\frac{1}{v}-\frac{1}{V}) U_{y}\psi_{y}-\v
Q_1\psi+\left[(\frac{u_y}{v}-\frac{U_y}{V})\psi-(p-P)\psi\right]_y.
\end{array}
\label{(3.2)}
\end{equation}
Set
\begin{equation}
\Phi(z)=z-1-\ln z.\label{(3.3)}
\end{equation}
It is easy to check that $\Phi(1)=\Phi'(1)=0$ and $\Phi(z)$ is
strictly convex around $z=1$. Moreover,
\begin{equation}
\di [R\T\Phi(\frac{v}{V})]_\tau=R\T_\tau\Phi(\frac{v}{V})
-R\T(\frac{1}{v}-\frac{1}{V})\phi_\tau-\f{PV_\tau}{vV}\phi^2.
\label{(3.4)}
\end{equation}

On the other hand, note that
\begin{equation}
[\f{R}{\g-1}\T
\Phi(\frac{\theta}{\T})]_\tau=\f{R}{\g-1}(1-\frac{\T}{\theta})\zeta_\tau+
\f{R}{\g-1}\Phi(\frac{\theta}{\T})\T_\tau-\f{R}{\g-1}\f{\T_\tau\z^2}{\t\T},\label{(3.5)}
\end{equation}
and
\begin{equation}
\begin{array}{ll}
&\di \frac{R}{\gamma-1}(1-\frac{\T}{\theta})\zeta_\tau\\ &\di
=(1-\frac{\T}{\theta})[-(pu_{y}-PU_{y})+
\nu(\frac{\theta_y}{v}-\frac{\T_y}{V})_y+(\frac{u_y^2}{v}-\frac{U_y^2}{V})-\v Q_2]\\
&\di= -\frac{R}{v}\zeta\psi_{y}-\frac{\zeta}{\theta}(p-P)U_{y}
-\nu(\frac{\zeta}{\theta})_y(\frac{\theta_y}{v}-\frac{\T_y}{V})
+\frac{\zeta}{\theta}(\frac{u_y^2}{v}-\frac{U_y^2}{V})\\
&\di \quad-\v\frac{\zeta}{\theta}Q_2+\left[\nu\frac{\zeta}{\theta}(\frac{\theta_y}{v}-\frac{\T_y}{V})\right]_y\\
&\di= -\frac{R}{v}\zeta\psi_{y}-\frac{\zeta}{\theta}(p-P)U_{y}
-\frac{\nu\zeta_y^2}{v\theta}-\nu\f{\z_y}{\t}(\frac1v-\f1V)\T_y\\
&\di
\quad+\frac{\nu\zeta\theta_y}{\theta^2}(\frac{\theta_y}{v}-\frac{\T_y}{V})+\frac{\zeta}{\theta}(\frac{u_y^2}{v}-\frac{U_y^2}{V})-\v
Q_2\frac{\zeta}{\theta}+\left[\frac{\nu\zeta}{\theta}(\frac{\theta_y}{v}-\frac{\T_y}{V})\right]_y.
\end{array}\label{(3.6)}
\end{equation}
Substituting \eqref{(3.4)}-\eqref{(3.6)} into \eqref{(3.2)} gives
\begin{equation}
\begin{array}{l}
\di [\frac12\psi^2+R\T\Phi(\frac{v}{V})+\frac{R}{\gamma-1}\T
\Phi(\frac{\theta}{\T})]_\tau +\frac{\psi_{y}^2}{v} \di
+\frac{\nu\zeta_y^2}{v\theta}+J_1\\
\di =-U_y(\f1v-\f1V)\psi_y
-\nu\f{\z_y}{\t}(\frac1v-\f1V)\T_y+\frac{\nu\zeta\theta_y}{\theta^2}(\frac{\theta_y}{v}-\frac{\T_y}{V})\\
\di\quad+\frac{\zeta}{\theta}(\frac{u_y^2}{v}-\frac{U_y^2}{V})-\v
Q_1\psi-\v Q_2\frac{\zeta}{\theta}+(\cdots)_y,
\end{array}\label{(3.7)}
\end{equation}
where
\begin{equation}
J_1=\frac{\zeta}{\theta}(p-P)U_{y}-R\T_\tau\Phi(\f
vV)-\f{R}{\g-1}\T_\tau\Phi(\f\t\T)+\f{PV_\tau}{vV}\phi^2+\f{R}{\g-1}\f{\T_\tau\z^2}{\t\T}.\label{(3.8)}
\end{equation}
Direct calculation shows that
\begin{equation}
\begin{array}{ll}
J_1&\di =PU_y[\Phi(\f{\t V}{v\T})+\g\Phi(\f
vV)]-[\f{U_y^2}{V}+\nu(\f{\T_y}{V})_y+\v Q_2][(\g-1)\Phi(\f
vV)-\Phi(\f\T\t)]\\
&\di=PU_y[\Phi(\f{\t V}{v\T})+\g\Phi(\f
vV)]-[\f{U_y^2}{V}+\nu(\f{\T_y}{V})_y+\v Q_2][(\g-1)\Phi(\f
vV)-\Phi(\f\T\t)].
\end{array}\label{(3.10)}
\end{equation}
Thus, substituting \eqref{(3.10)} into \eqref{(3.7)} gives
\begin{equation}
\begin{array}{l}
\di [\frac12\psi^2+R\T\Phi(\frac{v}{V})+\frac{R}{\gamma-1}\T
\Phi(\frac{\theta}{\T})]_\tau +\frac{\psi_{y}^2}{v} \di
+\frac{\nu\zeta_y^2}{v\theta}\\
\di+P(U^{R_1}_y+U^{R_3}_y)[\Phi(\f{\t V}{v\T})+\g\Phi(\f vV)]=J_2-\v
Q_1\psi-\v Q_2\frac{\zeta}{\theta}+(\cdots)_y,
\end{array}\label{(3.11)}
\end{equation}
where
\begin{equation}
\begin{array}{ll}
\di J_2=\di -PU^{CD}_y[\Phi(\f{\t V}{v\T})+\g\Phi(\f
vV)]+[\f{U_y^2}{V}+\nu(\f{\T_y}{V})_y+\v Q_2][(\g-1)\Phi(\f
vV)-\Phi(\f\T\t)]\\
\qquad\di -U_y(\f1v-\f1V)\psi_y -\nu\f{\z_y}{\t}(\frac1v-\f1V)\T_y
+\frac{\nu\zeta\theta_y}{\theta^2}(\frac{\theta_y}{v}-\frac{\T_y}{V})+\frac{\zeta}{\theta}(\frac{u_y^2}{v}-\frac{U_y^2}{V}).
\end{array}\label{(3.12)}
\end{equation}
Here,  $(\cdots)_y$ represents the conservative terms which vanishes
after integrating in $y$ over $\mathbf{R}$.

By the strict convexity of  $\Phi(z)$ around $z=1$, under
 the a priori assumption \eqref{(3.3+)} with sufficiently small $\chi>0$,
there exist positive constants $c_1$ and $c_2$ such that,
\begin{equation}
\begin{array}{l}
\di c_1\phi^2\le \Phi(\frac{v}{V})\le c_2\phi^2,\quad c_1\zeta^2\le
\Phi(\frac{\Theta}{\theta}),\Phi(\frac{\theta}{\Theta})\le
c_2\zeta^2,\\
\di c_1(\phi^2+\z^2)\leq \Phi(\f{\t V}{v\T})\leq c_2(\phi^2+\z^2).
\end{array}\label{(3.13)}
\end{equation}
Thus, we have
\begin{equation}
\begin{array}{l}
\di \int_{\mb{R}}|J_2|dy\leq
\int_{\mb{R}}(\f{\psi_y^2}{4v}+\f{\nu\z_y^2}{4v\t})dy+C(\tau+\tau_0)^{-2}\|(\phi,\z)\|^2\\
\di +C\int_{\mb{R}}\d^{CD}\v(1+\v\tau)^{-1}e^{-\f{c\v
y^2}{1+\v\tau}}|(\p,\z)|^2dy+\int_{\mb{R}}\v|Q_2||(\p,\z)|^2dy.
\end{array}\label{(3.14)}
\end{equation}
Notice that the last term $\v|Q_2||(\p,\z)|^2$ on the right hand
side of \eqref{(3.14)} can be estimated similarly as for
 the terms
$\v Q_1\psi$ and $\v Q_2\f{\z}{\t}$ under the a priori assumption
\eqref{(3.3+)}. Now we estimate the terms $\v Q_1\psi$ and $\v
Q_2\f{\z}{\t}$ on the right hand side of \eqref{(3.11)}. First,
$$
\int_{\mb{R}}\v |Q_1||\psi|dy=\int_{\mb{R}}\v
(|Q_{11}|+|Q_{12}|)|\psi|dy.
$$
From the estimation on the interaction given in \eqref{(2.21)}, we
get
\begin{equation}
\begin{array}{ll}
&\di \int_{0}^\tau\int_{\mb{R}}\v |Q_{11}||\psi|d\tau dy\\
&\di \leq
\int_{0}^\tau\|\psi\|_{L^\i_y}\int_{\mathbf{R}}|Q_{11}|dxd\tau\\
&\di \leq
C\int_{0}^\tau e^{-C\v^{-1/5}}e^{-\f{C(t+t_0)}{\v^{2/5}}}\|\psi\|^{\f12}\|\psi_y\|^{\f12}d\tau\\
&\di \leq \b\int_{0}^\tau\|\psi_y\|^2
d\tau+C_\b\int_{0}^\tau e^{-C\v^{-1/5}}e^{-C\v^{3/5}(\tau+\tau_0)}\|\psi\|^{\f23}d\tau\\
&\di \leq \b\int_{0}^\tau\|\psi_y\|^2 d\tau+C_\b
e^{-C\v^{-1/5}}\sup_{[0,\tau]}\|\psi(\tau)\|^\f23\\
&\di \leq \b\int_{0}^\tau\|\psi_y\|^2
d\tau+\b\sup_{[0,\tau]}\|\psi(\tau)\|^2+C_\b e^{-C\v^{-1/5}},
\end{array}
\label{(3.15)}
\end{equation}
 and
\begin{equation}
\begin{array}{ll}
\di &\di \int_{0}^\tau\int_{\mb{R}}\v |Q_{12}||\psi|d\tau dy\\
&\di \leq
\v^2\int_{0}^\tau\int_{\mb{R}}(|(w_\d^r)_{xx}|,|(w_\d^r)_x|^2)|\psi|d\tau dy\\
&\di \leq
\v\int_{0}^\tau(\|(w_\d^r)_{xx}\|_{L^1(dx)},\|(w_\d^r)_x\|_{L^2(dx)}^2)\|\psi\|_{L^\i_y}d\tau\\
&\di \leq
\int_{0}^\tau(\tau+\tau_0)^{-1}\|\psi\|^{\f12}\|\psi_y\|^{\f12}d\tau\\
&\di \leq \b\int_{0}^\tau\|\psi_y\|^2
d\tau+C_\b\int_{0}^\tau (\tau+\tau_0)^{-\f43}\|\psi\|^{\f23}d\tau\\
&\di \leq \b\int_{0}^\tau\|\psi_y\|^2 d\tau+3C_\b
\tau_0^{-\f13}\sup_{[0,\tau]}\|\psi(\tau)\|^\f23\\
&\di \leq \b\int_{0}^\tau\|\psi_y\|^2
d\tau+\b\sup_{[0,\tau]}\|\psi(\tau)\|^2+C_\b \v^{\f25},
\end{array}
\label{(3.16)}
\end{equation}
where $\tau_0=\f{t_0}{\v}=\v^{-\f45}$, and $\b>0$ is a small
constant to be determined later and $C_\b$ is a positive constant
depending on $\b$.

The term $\v Q_2\f\z\t$ can be estimated similarly because the only
difference is about the error term $Q^{CD}$ coming from  the viscous
contact wave in $Q_2$. For this, we have
\begin{equation}
\begin{array}{ll}
\di \v\int_{0}^\tau\int_{\mb{R}}|Q^{CD}||\z|dyd\tau\\
\di \leq
\v^2\int_{0}^\tau\Big[\|\z\|_{L^\i_y}\int_{\mb{R}}(1+\v\tau)^{-2}e^{-\f{c\v
y^2}{1+\v\tau}}dy\Big]d\tau\\
\di \leq
\v^{\f32}\int_{0}^\tau\Big[\|\z\|_{L^2_y}^\f12\|\z_y\|^\f12_{L^2_y}(1+\v\tau)^{-\f32}\Big]d\tau\\
\di \leq
\b\int_0^\tau\|\z_y\|^2d\tau+C_\b\v^2\sup_{[0,\tau]}\|\z\|_{L^2_y}^\f23\int_{0}^\tau(1+\v\tau)^{-2}d\tau\\
\di \leq \b\|\z_y\|^2+\b\sup_{[0,\tau]}\|\z\|_{L^2_y}^2+C_\b
\v^{\f32}.
\end{array}
\label{(3.17)}
\end{equation}
By substituting \eqref{(3.13)}-\eqref{(3.17)} into \eqref{(3.11)}
and choosing $\b$ suitably small, we can get
\begin{equation}
\begin{array}{ll}
&\di \|(\phi,\psi,\zeta)(\tau,\cdot)\|^2+\int_0^\tau\Big[\|(\psi_y,\zeta_y)\|^2+\|\sqrt{(U^{R_1}_y,U^{R_3}_y)}(\phi,\zeta)\|^2\Big]d\tau\\
&\di \leq
C\int_0^\tau(\tau+\tau_0)^{-2}\|(\phi,\zeta)\|^2d\tau+C\v^{\f25}\\
&\di \quad
+C\d^{CD}\v\int_0^\tau\int_{\mathbf{R}}(1+\v\tau)^{-1}e^{-\f{C_0\v
y^2}{1+\v\tau}}|(\p,\z)|^2dyd\tau.
\end{array}
 \label{(3.18)}
\end{equation}

Now we need to estimate $\|\phi_y\|^2$. Let $\tilde{v}=\f vV$, then
$$
\f{\tilde{v}_\tau}{\tilde{v}}=\f{u_y}{v}-\f{U_y}{V}.
$$
Rewrite the equation $\eqref{(2.24)}_2$ as
\begin{equation}
(\f{\tilde{v}_y}{\tilde{v}})_\tau-\psi_\tau-(p-P)_y-\v
Q_1=0.\label{(3.19)}
\end{equation}
By multiplying \eqref{(3.19)} by $\f{\tilde{v}_y}{\tilde{v}}$ and
noticing that
\begin{equation}
-(p-P)_y=\f{R\t}{v}\f{\tilde{v}_y}{\tilde
v}-\f{R\z_y}{v}+(p-P)\f{V_y}{V}+R\T_y(\f1v-\f1V),\label{(3.20)}
\end{equation}
we  get
$$
\begin{array}{ll}
&\di \left[\f{1}{2}(\f{\tilde{v}_y}{\tilde
v})^2-\psi\f{\tilde{v}_y}{\tilde
v}\right]_\tau+\left[\psi\f{\tilde{v}_\tau}{\tilde
v}\right]_y+\f{R\t}{v}(\f{\tilde{v}_y}{\tilde v})^2\\[5mm]
=&\di
\psi_y(\f{u_y}{v}-\f{U_y}{V})+\left[\f{R\z_y}{v}-(p-P)\f{V_y}{V}-R\T_y(\f1v-\f1V)+\v
Q_1\right]\f{\tilde{v}_y}{\tilde v}.
\end{array}
$$

Integrating the above equality over $[0,\tau]\times\mathbf{R}$ in
 $\tau$ and $y$, we obtain
\begin{equation}
\begin{array}{ll}
&\di \int_{\bf R}\left[\f{1}{2}(\f{\tilde{v}_y}{\tilde
v})^2-\psi\f{\tilde{v}_y}{\tilde
v}\right](\tau,y)dy+\int_0^\tau\int_{\bf
R}\f{R\t}{2v}(\f{\tilde{v}_y}{\tilde v})^2dy d\tau\\[4mm]
\leq &\di
C\int_0^\tau\bigg[\|(\psi_y,\z_y)\|^2+\v^2\|Q_1\|^2\bigg]d\tau+C\int_0^\tau\int_{\bf
R}|(V_y,U_y,\T_y)|^2|(\p,\z)|^2dy d\tau.
\end{array}\label{(3.21)}
\end{equation}
The by using the equality
$$
\f{\tilde v_y}{\tilde
v}=\f{v_y}{v}-\f{V_y}{V}=\f{\p_y}{v}-\f{V_y\p}{vV},
$$
we have
\begin{equation}
C^{-1}(|\p_y|^2-|V_y\p|^2)\leq (\f{\tilde{v}_y}{\tilde v})^2\leq
C(|\p_y|^2+|V_y\p|^2).\label{(3.22)}
\end{equation}
By the estimation on $Q_{11}$ in \eqref{(2.21)} and Lemma 2.3, we
have
\begin{equation}
\begin{array}{ll}
\di \int_0^\tau\v^2\|Q_1\|^2d\tau&\di \leq
C\int_0^\tau\int_{\mathbf{R}}\v^2(|Q_{11}|^2+|Q_{12}|^2)dyd\tau\\
&\di \leq C\int_0^t\int_{\mathbf
R}(|Q_{11}|^2+\v^2|(w_\d^r)_{xx}|^2+\v^2|(w_\d^r)_x|^4)dxdt\\
&\di\leq Ce^{-C\v^{-1/5}}+C\v^2(t_0^{-2}+\d^{-1}t_0^{-1})\\
&\di\leq C\v^{\f75}.
\end{array}
\label{(3.23)}
\end{equation}
Moreover, we have
\begin{equation}
\begin{array}{ll}
\di |(V_y,U_y,\T_y)|^2= \v^2|(V_x,U_x,\T_x)|^2\\
\qquad\di \leq
\v^2\sum_{i=1,3}|(V^{R_i}_x,U^{R_i}_x,\T^{R_i}_x)|^2+\v^2|(V_x^{CD},U^{CD}_x,\T_x^{CD})|^2\\
\qquad \di \leq
C\v^2(t+t_0)^{-2}+C\d^{CD}\v(1+t)^{-1}e^{-\f{C_0x^2}{\v(1+t)}}\\
\qquad\di =C(\tau+\tau_0)^{-2}+C\d^{CD}\v(1+\v\tau)^{-1}e^{-\f{C_0\v
y^2}{1+\v\tau}}.
\end{array}\label{(3.24)}
\end{equation}

Substituting \eqref{(3.22)}-\eqref{(3.24)} into \eqref{(3.21)} gives
\begin{equation}
\begin{array}{ll}
&\di \|\phi_y(\tau,\cdot)\|^2+\int_0^\tau\|\p_y\|^2 d\tau \leq
C\|(\p,\psi)(\tau,\cdot)\|^2\\
&\di\quad+C\int_0^\tau\|(\psi_y,\z_y)\|^2d\tau
+C\int_0^\tau(\tau+\tau_0)^{-2}\|(\phi,\zeta)\|^2d\tau+C\v^{\f75}\\
&\di\quad +C\d^{CD}\int_0^\tau\int_{\bf
R}\v(1+\v\tau)^{-1}e^{-\f{C_0\v y^2}{1+\v\tau}}|(\p,\z)|^2dy d\tau.
\end{array}
 \label{(3.25)}
\end{equation}
Now we estimate the higher order derivatives of $(\psi,\zeta)$.
Multiplying $\eqref{(2.24)}_2$ by $-\psi_{yy}$ and
$\eqref{(2.24)}_3$ by $-\zeta_{yy}$, and then adding the resulting
equations together yield
\begin{equation}
\begin{array}{l}
\di
[\frac12\psi_y^2+\frac{R}{2(\gamma-1)}\zeta_y^2]_\tau+\frac{\psi_{yy}^2}{v}
+\nu\frac{\zeta_{yy}^2}{v}\\
\di=\big\{(p-P)_y+\frac{v_y}{v^2}
\psi_y+[U_y(\f1v-\f1V)]_y+\v Q_1\big\}\psi_{yy}\\
\di +\big\{(pu_y-PU_y)+\frac{\nu v_y}{v^2} \zeta_y
+[\nu\Theta_y(\f1v-\f1V)]_y+(\frac{u_y^2}{v}-\frac{U_y^2}{V})+\v
Q_2\big\}\zeta_{yy}.
\end{array}
\label{(3.26)}
\end{equation}
The right hand side of \eqref{(3.26)} will be estimated terms by
terms as follows. From \eqref{(3.20)} and \eqref{(3.24)}, we  get
\begin{equation}
\begin{array}{ll}
\di \int_0^\tau\int_{\mathbf
R}(p-P)_y\psi_{yy}dyd\tau\\
\di \leq C\int_0^\tau\int_{\mathbf R}
\Big[|(\phi_y,\zeta_y)|+|(V_y,\T_y)||(\phi,\zeta)|\Big]|\psi_{yy}|dyd\tau\\
\di\leq
\b\int_0^\tau\|\psi_{yy}\|^2d\tau+C_\b\int_0^\tau\|(\phi_y,\zeta_y)\|^2d\tau+C_\b\int_0^\tau(\tau+\tau_0)^{-2}\|(\phi,\zeta)\|^2d\tau\\
\di +C_\b\d^{CD}\int_0^\tau\int_{\bf R}\v(1+\v\tau)^{-1}e^{-\f{C\v
y^2}{1+\v\tau}}|(\p,\z)|^2dy d\tau.
\end{array}\label{(3.27)}
\end{equation}
Similar estimate holds for the term $\int_0^\tau\int_{\mathbf
R}(pu_y-PU_y)\zeta_{yy}dyd\tau.$

 Notice that
\begin{equation}
\begin{array}{ll}
\di \int_0^\tau\int_{\mathbf
R}\frac{v_y}{v^2}\psi_y\psi_{yy}dyd\tau\\
\di \leq C\int_0^\tau\int_{\mathbf R}
(|\phi_y|+|V_y|)|\psi_y||\psi_{yy}|dyd\tau\\
\di \leq
C\int_0^\tau(\|\phi_y\|\|\psi_{yy}\|\|\psi_y\|_{L_y^\i}+\|V_y\|_{L^\i_y}\|\|\psi_y\|\psi_{yy}\|)d\tau\\
\di \leq
C\int_0^\tau\|\psi_{yy}\|^{\f32}\|\psi_y\|^{\f12}\|\phi_y\|d\tau+C\v^{\f12}\int_0^\tau
\|\psi_y\|\psi_{yy}\| d\tau\\
\di\leq
\b\int_0^\tau\|\psi_{yy}\|^2d\tau+C_\b(\sup_{[0,\tau]}\|\phi_y\|^4+\v)\int_0^\tau\|\psi_y\|^2d\tau\\
\di\leq
\b\int_0^\tau\|\psi_{yy}\|^2d\tau+C_\b(\chi^4+\v)\int_0^\tau\|\psi_y\|^2d\tau,
\end{array}\label{(3.28)}
\end{equation}
where in the third inequality we have used the fact that
$\|V_y\|_{L^\i}\leq C\v^{\f12}$ because of  \eqref{(3.24)}.

Similarly, we have
\begin{equation}
\begin{array}{ll}
\di \int_0^\tau\int_{\mathbf
R}\nu\frac{v_y}{v^2}\zeta_y\zeta_{yy}dyd\tau\\
\di\leq
\b\int_0^\tau\|\zeta_{yy}\|^2d\tau+C_\b(\chi^4+\v)\int_0^\tau\|\zeta_y\|^2d\tau.
\end{array}\label{(3.29)}
\end{equation}
The remaining terms can be estimated directly by using
\eqref{(3.23)} and the fact that
$$
[U_y(\f1v-\f1V)]_y=O(1)[|(U_{yy},U_yV_y)||\phi|+|U_y||\phi_y|],
$$
$$
[\nu\Theta_y(\f1v-\f1V)]_y=O(1)[|(\Theta_{yy},\Theta_yV_y)||\phi|+|\Theta_y||\phi_y|].
$$
Hence,  if we take $\b$ suitably small, then we obtain
\begin{equation}
\begin{array}{ll}
\di
\|(\psi_y,\zeta_y)(\tau,\cdot)\|^2+\int_0^\tau\|(\psi_{yy},\zeta_{yy})\|^2d\tau\\
\di\quad \leq C\int_0^\tau\|(\phi_y,\psi_y,\zeta_y)\|^2d\tau
+C\int_0^\tau(\tau+\tau_0)^{-2}\|(\phi,\zeta)\|^2d\tau+C\v^{\f75}\\
\di\quad +C\d^{CD}\int_0^\tau\int_{\bf R}\v(1+\v\tau)^{-1}e^{-\f{C\v
y^2}{1+\v\tau}}|(\p,\z)|^2dy d\tau.
\end{array}\label{(3.30)}
\end{equation}
The combination of \eqref{(3.18)}, \eqref{(3.25)} and \eqref{(3.30)}
yields that
\begin{equation}
\begin{array}{ll}
\di
\|(\phi,\psi,\zeta)(\tau,\cdot)\|_1^2+\int_0^\tau\Big[\|\phi_y\|^2+\|(\psi_{y},\zeta_{y})\|_1^2\Big]d\tau\\
\di\quad \leq
C\int_0^\tau(\tau+\tau_0)^{-2}\|(\phi,\zeta)\|^2d\tau+C\v^{\f25}\\
\di\quad +C\d^{CD}\int_0^\tau\int_{\bf
R}\v(1+\v\tau)^{-1}e^{-\f{C_0\v y^2}{1+\v\tau}}|(\p,\z)|^2dy d\tau.
\end{array}\label{(3.31)}
\end{equation}

In order to close the estimate, we only need to control the last
term in \eqref{(3.31)}, which comes from the viscous contact wave.
For this,  we will apply the following technique by using the heat
kernel motivated by \cite{Huang-Li-Matsumura}.

%\vskip 2mm

\begin{lemma} \label{Lemma 3.2} Suppose that $h(\tau,y)$ satisfies
$$
h\in L^\i(0,+\i; L^2(\mathbf{R})),~~h_y\in L^2(0,+\i;
L^2(\mathbf{R})),~~h_\tau\in L^2(0,+\i; H^{-1}(\mathbf{R})),
$$
Then
\begin{equation}
\begin{array}{ll}
&\di \int_0^\tau
\int_{\mathbf{R}^+}\v(1+\v\tau)^{-1}e^{-\f{2a\v y^2}{1+\v\tau}}h^2(\tau,y)dy
d\tau\\[2mm]
&\di \leq C_a\bigg[ \|h(0,y)\|^2+\int_0^\tau \|h_y\|^2
d\tau+\int_0^\tau\langle h_\tau,hg_a^2\rangle_{H^{-1}\times
H^{1}}d\tau\bigg]
\end{array}\label{(3.32)}
\end{equation}
where
$$
g_a(\tau,y)=\v^\f12(1+\v\tau)^{-\f12}\int^{y}_{-\i} e^{-\f{a\v
\eta^2}{1+\v\tau}}d\eta,
$$
and $a>0$ is the constant to be determined later.
\end{lemma}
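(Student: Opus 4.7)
\smallskip

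\noindent\textbf{Proof plan for Lemma 3.2.} The key observation is that the weight appearing on the left-hand side is precisely $(\partial_y g_a)^2$: a direct computation gives
$$
\partial_y g_a(\tau,y) = \v^{1/2}(1+\v\tau)^{-1/2} e^{-\frac{a\v y^2}{1+\v\tau}},
$$
so $(\partial_y g_a)^2 = \v(1+\v\tau)^{-1} e^{-\frac{2a\v y^2}{1+\v\tau}}$, and since this is nonnegative it suffices to prove the inequality with the $y$-integration taken over all of $\mathbf{R}$. Moreover, after the substitution $\eta = y\sqrt{\v/(1+\v\tau)}$ one sees that $g_a$ is a function of the single variable $y\sqrt{\v/(1+\v\tau)}$, from which a straightforward calculation gives the heat-equation-like identity
$$
\partial_\tau g_a = \frac{1}{4a}\,\partial_{yy} g_a.
$$
Finally, $g_a$ is uniformly bounded: $|g_a(\tau,y)| \leq \sqrt{\pi/a}$ for all $(\tau,y)$.

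With these three facts in hand, the plan is to write
$$
\int_{\mathbf{R}} h^2 (\partial_y g_a)^2\,dy = \int_{\mathbf{R}} h^2 (\partial_y g_a)(\partial_y g_a)\,dy
$$
and integrate by parts in $y$ to produce $\partial_{yy}g_a$, then convert that to $\partial_\tau g_a$ via the heat equation above. This yields the pointwise-in-$\tau$ identity
$$
\int_{\mathbf{R}} h^2 (\partial_y g_a)^2\,dy = -2\int_{\mathbf{R}} h h_y g_a (\partial_y g_a)\,dy - 2a\,\frac{d}{d\tau}\!\int_{\mathbf{R}} h^2 g_a^2\,dy + 4a\int_{\mathbf{R}} h h_\tau\, g_a^2\,dy.
$$
Integrating in $\tau$ from $0$ to $\tau$, the time-derivative term telescopes to $-2a\int h^2 g_a^2\,dy\big|_0^\tau$, which is bounded above by $2\pi\|h(0,\cdot)\|^2$ using $|g_a|\le\sqrt{\pi/a}$ and discarding the nonnegative endpoint at $\tau$.

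For the remaining cross term, Cauchy--Schwarz together with $|g_a|\le\sqrt{\pi/a}$ gives
$$
\Big|2\int_0^\tau\!\!\int_{\mathbf{R}} h h_y g_a (\partial_y g_a)\,dy\,d\tau'\Big| \leq \tfrac12 \int_0^\tau\!\!\int_{\mathbf{R}} h^2 (\partial_y g_a)^2\,dy\,d\tau' + C_a \int_0^\tau \|h_y\|^2\,d\tau',
$$
and the first term on the right is absorbed into the left-hand side of the identity. The final term $4a\int_0^\tau\int h h_\tau g_a^2\,dy\,d\tau'$ is exactly the duality pairing $\langle h_\tau, h g_a^2\rangle_{H^{-1}\times H^1}$ (which is well-defined since $g_a$ and $\partial_y g_a$ are bounded so $h g_a^2 \in H^1$ whenever $h\in H^1$), and this yields the stated bound. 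The main technical point is the recognition of the heat-equation structure of $g_a$; once that is in hand the rest is a standard integration-by-parts and Young's-inequality argument.
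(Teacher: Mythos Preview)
Your proof is correct and follows precisely the approach of Huang--Li--Matsumura that the paper cites (the paper itself omits the argument, merely noting that one must track the parameter $\varepsilon$). The identification $(\partial_y g_a)^2$ with the weight, the heat-equation relation $\partial_\tau g_a=\tfrac{1}{4a}\partial_{yy}g_a$, the uniform bound $|g_a|\le\sqrt{\pi/a}$, and the integration-by-parts/absorption scheme are exactly the ingredients of the referenced proof, so there is nothing to add.
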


The proof of  Lemma \ref{Lemma 3.2} is similar to the one given  in
\cite{Huang-Li-Matsumura}. The only difference here is that we need
to be careful about the parameter
 $\v$ in the estimation. Therefore, we omit its proof for brevity.
 Based on Lemma \ref{Lemma 3.2}, we can
obtain

%\vskip 2mm

\begin{lemma} \label{Lemma 3.3} There exists a constant $C>0$ such that
if $\d^{CD}$ and $\v_0$ are small enough, then we have
\begin{equation}
\begin{array}{ll}
&\di \int_0^\tau\int_{{\bf R}}\v(1+\v\tau)^{-1}e^{-\f{C_0\v
y^2}{1+\v\tau}} |(\p,\psi,\z)|^2 dy
d\tau\\[3mm]
&\di  \leq
C\|(\p,\psi,\z)(\tau,\cdot)\|^2+C\int_0^\tau\|(\p_y,\psi_y,\z_y)\|^2d\tau\\
&\di +C\int_0^\tau (\tau+\tau_0)^{-\f{3}{2}}\|(\p,\psi,\zeta)\|^2
d\tau+C\v^{\f25}.
\end{array}\label{(3.33)}
\end{equation}
\end{lemma}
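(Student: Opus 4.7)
The plan is to apply Lemma \ref{Lemma 3.2} separately to each of the three components $h=\phi,\psi,\zeta$, choosing the parameter $a$ in $g_a$ so that $2a=C_0$. Since the initial perturbation vanishes by $\eqref{(2.24)}_4$, the boundary term $\|h(0,y)\|^2$ in \eqref{(3.32)} drops out, and the term $\int_0^\tau\|h_y\|^2d\tau$ is already part of the right-hand side of \eqref{(3.33)}. Thus the whole task reduces to bounding the duality pairing $\int_0^\tau\langle h_\tau, hg_a^2\rangle_{H^{-1}\times H^1}d\tau$ for $h=\phi,\psi,\zeta$ by the right-hand side of \eqref{(3.33)}.

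For $h=\phi$, the mass equation $\phi_\tau=\psi_y$ gives $\langle\phi_\tau,\phi g_a^2\rangle=\int\psi_y\phi g_a^2\,dy$, which after integration by parts equals $-\int\psi\phi_y g_a^2\,dy-2\int\psi\phi g_a(g_a)_y\,dy$. The first integral is $O(\|\psi\|\,\|\phi_y\|)$, which fits directly into the right-hand side of \eqref{(3.33)} after time integration. For the second, the pointwise bound $|(g_a)_y|\le\v^{1/2}(1+\v\tau)^{-1/2}e^{-a\v y^2/(1+\v\tau)}$ together with $|g_a|\le C$ and Cauchy--Schwarz produces a small multiple of $\int\v(1+\v\tau)^{-1}e^{-2a\v y^2/(1+\v\tau)}(\phi^2+\psi^2)\,dy$ plus $\|\psi_y\|^2$, and the first piece is absorbed into the left-hand side of \eqref{(3.33)}.

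For $h=\psi$ one substitutes $\psi_\tau=-(p-P)_y+(u_y/v-U_y/V)_y-\v Q_1$ from $\eqref{(2.24)}_2$ and integrates by parts. The $(p-P)_y$ contribution produces $\|(\phi_y,\zeta_y)\|^2$ together with $\|(V_y,\T_y)\|_{L^\infty}^2\|(\phi,\zeta)\|^2$; by \eqref{(3.24)} the latter yields precisely a $(\tau+\tau_0)^{-2}$ factor and the Gaussian weight factor, which together give an $(\tau+\tau_0)^{-3/2}\|(\phi,\zeta)\|^2$ bound after a small absorption. The viscous term contributes $\|\psi_y\|^2$. For $\v Q_1\psi g_a^2$, split $Q_1=Q_{11}+Q_{12}$ as in \eqref{(2.19)}: by \eqref{(2.21)} the interaction piece $Q_{11}$ is exponentially small in $\v^{-1/5}$, and $Q_{12}$ is handled exactly as in \eqref{(3.15)}--\eqref{(3.16)} to give $O(\v^{2/5})$ plus absorbable terms. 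The computation for $h=\zeta$ is parallel, using $\eqref{(2.24)}_3$ and the splitting $Q_2=Q_{21}+Q_{22}+|Q^{CD}|$ from \eqref{(2.20)}.

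The main obstacle is the contact-wave error $\v Q^{CD}\zeta g_a^2$, since by \eqref{(2.7)} $|Q^{CD}|\leq C\delta^{CD}\v(1+\v\tau)^{-2}e^{-C_0\v y^2/(1+\v\tau)}$ contains the very same Gaussian weight one is trying to control, so a naive bound would be circular. The resolution is to write $\v|Q^{CD}\zeta g_a^2|\leq \eta\,\v(1+\v\tau)^{-1}e^{-2a\v y^2/(1+\v\tau)}\zeta^2+C_\eta(\delta^{CD})^2\v^3(1+\v\tau)^{-3}e^{-c\v y^2/(1+\v\tau)}$, so that the first piece is absorbed into the left-hand side of \eqref{(3.33)} (this requires $\delta^{CD}$ and $\v_0$ small, which is where those smallness hypotheses enter), while the second integrates to $O(\v^{3/2})$ after performing the $y$-integration and the $\tau$-integration, well within the $O(\v^{2/5})$ budget. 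Summing the three resulting inequalities and choosing the absorption constants small enough yields \eqref{(3.33)}.
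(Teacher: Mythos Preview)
Your approach has a genuine gap: applying Lemma \ref{Lemma 3.2} directly to $h=\phi,\psi,\zeta$ does not close, because the cross-coupling produces undamped time integrals of $\|(\phi,\psi,\zeta)\|^2$. Concretely, for $h=\phi$ you obtain $-\int\psi\phi_y g_a^2\,dy$ and assert this ``fits directly into the right-hand side of \eqref{(3.33)}''. But $g_a$ is merely bounded (it carries no Gaussian decay), so this term is only $O(\|\psi\|\,\|\phi_y\|)$; after Cauchy and time integration you are left with $\int_0^\tau\|\psi\|^2\,d\tau$, which is \emph{not} controlled by any term on the right of \eqref{(3.33)} and cannot be absorbed into the left (the left side carries the Gaussian weight, hence is much smaller than $\|\psi\|^2$). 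The same failure recurs for $h=\psi$ and $h=\zeta$: e.g.\ the $(p-P)_y$ contribution to $\langle\psi_\tau,\psi g_a^2\rangle$ contains $\int\frac{R\zeta_y-P\phi_y}{v}\psi g_a^2\,dy$, which again forces an unweighted $\int_0^\tau\|\psi\|^2\,d\tau$. Your claim that the second term ``produces \dots\ plus $\|\psi_y\|^2$'' is also unclear; $g_a(g_a)_y$ has a single Gaussian factor, and any Cauchy split still leaves an unweighted $L^2$ norm of $\phi$ or $\psi$.

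The paper circumvents this by working with two specific linear combinations rather than the raw variables. First, starting from the momentum equation rewritten as \eqref{(3.34)}, one multiplies by $G_b(R\zeta-P\phi)$; the term $-G_bv(R\zeta-P\phi)\psi_\tau$ is integrated by parts in $\tau$, and the resulting factor $(R\zeta-P\phi)_\tau$ contains a piece $-\gamma P\psi_y$ which, paired with $G_bv\psi$, becomes an exact $y$-derivative plus $\gamma vP(G_b)_y\frac{\psi^2}{2}$. Since $(G_b)_y$ carries the Gaussian, this yields weighted control of \emph{both} $(R\zeta-P\phi)^2$ and $\psi^2$ as good terms (see \eqref{(3.39)}--\eqref{(3.42)}). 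Second, Lemma \ref{Lemma 3.2} is applied to $h=\frac{R}{\gamma-1}\zeta+P\phi$; the dangerous piece $H_1=-(p-P)\psi_y$ in $h_\tau$ is handled by writing $\psi_y=\phi_\tau$ and integrating by parts in $\tau$, which converts it into terms cubic in $(\phi,h)$ that are then bounded by $C\chi^2\int_0^\tau\|(\phi,\psi,\zeta)_y\|^2\,d\tau$ under the a~priori assumption. Because $R\zeta-P\phi$ and $\frac{R}{\gamma-1}\zeta+P\phi$ are independent in $(\phi,\zeta)$, combining the two steps recovers the full left side of \eqref{(3.33)}. This diagonalization-in-time structure is the missing idea in your proposal.
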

\begin{proof}
  From the equation $\eqref{(2.24)}_2$ and the fact that
$p-P=\f{R\z-P\p}{v}$, we have
$$
\psi_\tau+(\f{R\z-P\p}{v})_y=(\f{u_y}{v}-\f{U_y}{V})_y-\v Q_1.
$$
Then
\begin{equation}
(R\z-P\p)_y=\f{R\z-P\p}{v}(V_y+\p_y)-v\psi_\tau+
v(\f{u_y}{v}-\f{U_y}{V})_y-v\v Q_1.\label{(3.34)}
\end{equation}

Let
$$
G_b(\tau,y)=\v(1+\v\tau)^{-1}\int^{y}_{-\i} e^{-\f{b\v
\eta^2}{1+\v\tau}}d\eta,
$$
where $b$ is a positive constant to be determined later. Multiplying
the equation \eqref{(3.34)} by $G_b(R\z-P\p)$ gives
\begin{equation}
\begin{array}{ll}
&\di
\left[\f{G_b(R\z-P\p)^2}{2}\right]_y-(G_b)_y\f{(R\z-P\p)^2}{2}\\[2mm]
=&\di \f{G_b(R\z-P\p)^2}{v}(V_y+\p_y)-G_b v(R\z-P\p)\psi_\tau\\[2mm]
&\di +G_b v(R\z-P\p)(\f{u_y}{v}-\f{U_y}{V})_y-\v G_b v(R\z-P\p)Q_1.
\end{array}\label{(3.35)}
\end{equation}
Note that
\begin{equation}
\begin{array}{ll}
\di -G_b v(R\z-P\p)\psi_\tau&\di= -[G_b v(R\z-P\p)\psi]_\tau+[G_b
v(R\z-P\p)\psi]_y\\[3mm]
&\di\quad +(G_b v)_\tau(R\z-P\p)\psi+G_b v\psi(R\z-P\p)_\tau,
\end{array}\label{(3.36)}
\end{equation}
\begin{equation}
\begin{array}{ll}
\di (R\z-P\p)_\tau\\[1mm]
\di=R\z_\tau-P_\tau\p-P\p_\tau\\[1mm]
\di
=(\g-1)\bigg[-(p-P)(U_y+\psi_y)+(\f{u_y^2}{v}-\f{U_y^2}{V})+\nu(\f{\t_y}{v}-\f{\T_y}{V})_y
-\v Q_2\bigg]\\
\di\quad -\g P\psi_y-P_\tau\p.
\end{array}\label{(3.37)}
\end{equation}
By using the equality
\begin{equation}
\di -G_b v\g P \psi_y \psi =-[\g G_b vP\f{\psi^2}{2}]_y+\g v
P(G_b)_y\f{\psi^2}{2}+\g(vP)_y G_b\f{\psi^2}{2}, \label{(3.38)}
\end{equation}
 we have
\begin{equation}
\v(1+\v\tau)^{-1}e^{-\f{b\v y^2}{1+\v\tau}}[(R\z-P\p)^2+\g
Pv\psi^2]=[G_b v(R\z-P\p)\psi]_\tau+(\cdots)_{y}+Q_4, \label{(3.39)}
\end{equation}
where
\begin{equation}
\begin{array}{ll}
\di Q_4=&\di -(v G_b)_\tau v(R\z-P\p)\psi-\f{\g\psi^2}{2}(Pv)_yG_b+G_b v\psi P_\tau\p \\[2mm]
&\di +(\g-1)G_b
v\psi\left[(p-P)(U_y+\psi_y)-(\f{u_y^2}{v}-\f{U_y^2}{V})+\v Q_2\right]\\[2mm]
&\di+[G_b v(R\z-P\p)]_y(\f{u_y}{v}-\f{U_y}{V})+(\g-1)\nu(G_b v \psi)_y(\f{\t_y}{v}-\f{\T_y}{V})\\[2mm]
&\di -\f{G_b(R\z-P\p)^2}{v}(V_y+\phi_y)+\v G_b v(R\z-P\p)Q_1.
\end{array}
 \label{(3.41)}
\end{equation}
Note that
$$
\|G_b(\tau,\cdot)\|_{L^\i}\leq C_\a \v^\f12(1+\v\tau)^{-\f12}.
$$
Thus, integrating \eqref{(3.39)} over $(0,\tau)\times\mathbf{R}$
gives
\begin{equation}
\begin{array}{ll}
&\di \int_0^\tau\int_{{\bf R}}\v(1+\v\tau)^{-1}e^{-\f{b\v
y^2}{1+\v\tau}}[(R\z-P\p)^2+\psi^2]dy d\tau\\[3mm]
&\di \leq C\|(\p,\psi,\z)(\tau,\cdot)\|^2+C\int_0^\tau
\|(\p_y,\psi_y,\z_y)(\tau,\cdot)\|^2 d\tau\\[2mm]
&\di +C\int_0^\tau
(\tau+\tau_0)^{-\f{3}{2}}\|(\p,\psi,\z)(\tau,\cdot)\|^2d\tau+C\v^{\f75}\\[2mm]
&\di +C\d^{CD}\int_0^\tau\int_{{\bf R}}\v(1+\v\tau)^{-1}e^{-\f{C_0\v
y^2}{1+\v\tau}}|(\p,\z)|^2dy d\tau.
\end{array}\label{(3.42)}
\end{equation}

In order to get the desired estimate stated in Lemma \ref{Lemma
3.3}, set
$$
h=\f{R}{\g-1}\z+P\p
$$
in Lemma \ref{Lemma 3.2}. We only need to compute the last term on
the right hand side of \eqref{(3.32)} for this given function $h$.
From the energy equation $\eqref{(2.24)}_3$, we have
\begin{equation}
\begin{array}{ll}
\di
h_\tau&\di=-(p-P)\psi_y+[P_\tau\p-(p-P)U_y]+\nu(\f{\t_y}{v}-\f{\T_y}{V})_y+(\f{u_y^2}{v}-\f{U_y^2}{V})-\v
Q_2\\
&\di :=\sum_{i=1}^5 H_i.
\end{array}
\label{(3.43)}
\end{equation}
Thus
\begin{equation}
\begin{array}{ll}
\di\int_0^\tau\langle h_\tau,hg_a^2\rangle_{H^1\times H^{-1}}d\tau
=\sum_{i=1}^5\int_0^\tau\int_{\mathbf{R}}hg_a^2 H_idyd\tau.
\end{array}\label{(3.44)}
\end{equation}
By noticing  that
$$
\|g_a(\tau,\cdot)\|_{L^\i}\leq C_a,
$$
we can estimate $\di \int_0^\tau\int_{\mathbf{R}}hg_a^2
H_idyd\tau(i=2,\cdots,6)$ directly. The estimation on\\ $\di
\int_0^\tau\int_{\mathbf{R}}hg_a^2 H_1dyd\tau$ is more subtle.
Firstly, by using the mass equation $\eqref{(2.24)}_1$, we have
$$
\begin{array}{ll}
\di
hg_a^2 H_1&\di =-(p-P)\psi_y hg_a^2\\[2mm]
&\di=-\f{(\g-1)h+\g P\p}{v}hg_a^2\p_\tau\\[2mm]
&\di=-\f{(\g-1)h^2 g_a^2}{v}\p_\tau-\f{\g Phg_a^2}{2v}(\p^2)_\tau\\[2mm]
&\di=-\big[ \f{(\g-1)h^2\p g_a^2}{v}+\f{\g Ph
\p^2g_a^2}{2v}\big]_\tau
+\f{2(\g-1)h^2\p+\g Ph\p^2}{v}g_a(g_a)_\tau\\[3mm]
&\di\quad-\f{2(\g-1)h^2\p+\g Ph\p^2}{2v^2}g_a^2v_\tau+\f{\g
h\p^2g_a^2}{2v}P_\tau+ \big[\f{2(\g-1)\p h}{v}+\f{\g
P\p^2}{2v}\big]g_a^2h_\tau\\
&\di:=\sum_{i=1}^5 J_i.
\end{array}
$$
Now the terms $J_i(i=1,\cdots,4)$ can be estimated directly, cf.
 \cite{Huang-Li-Matsumura}. Here we
only calculate the term $J_5.$ From \eqref{(3.43)}, we have
$$
J_5=\sum_{i=1}^6\big[\f{2(\g-1)\p h}{v}+\f{\g
P\p^2}{2v}\big]g_a^2H_i:=\sum_{i=1}^5J_5^i.
$$
Now $J_5^1$ can be estimated as follows:
$$
\begin{array}{ll}
\di \int_0^\tau\int |J_5^1|dyd\tau &\di \leq
C\int_0^\tau\int|\psi_y||(\p,\z)|^3dyd\tau\\
&\di \leq C\int_0^\tau
\|(\p,\z)\|^2_{L_\i}\|\psi_y\|\|(\p,\z)\|d\tau\\
&\di \leq C\int_0^\tau\|(\p,\z)_y\|\|\psi_y\|\|(\p,\z)\|^2d\tau\\
&\di \leq
C\sup_{[0,\tau]}\|(\p,\z)(\tau,\cdot)\|^2\int_0^\tau\|(\p,\psi,\z)_y\|^2
d\tau\\
&\di \leq C\chi^2\int_0^\tau\|(\p,\psi,\z)_y\|^2 d\tau.
\end{array}
$$
Note that the other terms $J_5^i(i=2,\cdots,5)$ can be estimated
directly, we omit the details for brevity.

Therefore, by taking the constant $a=\f{C_0}{2}$, we obtain
\begin{equation}
\begin{array}{ll}
&\di \int_0^\tau\int_{{\bf R}} \v(1+\v\tau)^{-1}e^{-\f{C_0\v
y^2}{1+\v\tau}} h^2 dy d\tau\\
&\di  \leq
C\|(\p,\psi,\z)(\tau,\cdot)\|^2+C\int_0^\tau\|(\p_y,\psi_y,\z_y)\|^2d\tau+C\int_0^\tau
(\tau+\tau_0)^{-\f{3}{2}}\|(\p,\psi)\|^2 d\tau\\
&\di +C\v^{\f25}+C(\d^{CD}+\chi)\int_0^\tau \int_{{\bf
R}^+}\v(1+\v\tau)^{-1}e^{-\f{C_0\v
y^2}{1+\v\tau}}|(\p,\z)|^2dyd\tau.
\end{array}\label{(3.45)}
\end{equation}
By taking $b=C_0$ in (\ref{(3.42)}) and by combining the estimates
(\ref{(3.42)}) with (\ref{(3.45)}), we  yield the desired estimation
in Lemma \ref{Lemma 3.3} if we choose suitably small positive
 constants $\d^{CD}$, $\v_0$
and $\chi$.
\end{proof}

Now from \eqref{(3.31)} and Lemma \ref{Lemma 3.3}, if the strength
of the contact wave $\d^{CD}$ and the parameter $\chi$ on the a
priori estimate are suitably small, we can get
$$
\begin{array}{ll}
\di \|(\p,\psi,\z)(\tau,\cdot)\|_1^2  &\di +\int_0^\tau\Big[\|\p_y\|^2+\|(\psi_y,\z_y)\|_1^2\Big] d\tau \\
&\di \leq
C\Big[\int_{0}^\tau(\tau+\tau_0)^{-\f{3}{2}}\|(\p,\psi,\z)\|^2
d\tau+\v^{\f25}\Big].
\end{array}
$$
With this, the Gronwall inequality gives
$$
\|(\p,\psi,\z)(\tau,\cdot)\|_1^2+\int_0^\tau\Big[
\|\p_y\|^2+\|(\psi_y,\z_y)\|_1^2\Big] d\tau \leq C\v^{\f25}.
$$
And then  we complete the proof of Theorem \ref{Theorem 3.1} by
Sobolev imbedding.

\section{Proof of Theorem \ref{Theorem 2.2}: Hydrodynamic limit of Boltzmann equation}
\setcounter{equation}{0}

In the last section, we will prove the fluid dynamic limit for the
Boltzmann equation to the Riemann solution for the Euler equations
as stated in Theorem \ref{Theorem 2.2}. Again, the proof is based on
energy estimates for
 the Boltzmann equation
\eqref{(2.52)} in the scaled independent variables. For this, it is
sufficient to prove the following theorem.

\begin{theorem}\label{Theorem 4.1} There exist two small positive
constants $\delta_1$, $\v_1$, and a global\break Maxwellian
$\mb{M}_\star=\mb{M}_{[v_\star,u_\star,\theta_\star]}$ such that if
the initial data and the strength of the contact wave  $\delta^{CD}$
satisfy
\begin{equation}
\mathcal{N}(\tau)|_{\tau=0}+\delta^{CD} \le \delta_1, \label{(4.9)}
\end{equation}
and the Knudsen number  $\v\leq \v_1$, then the problem
(\ref{(2.52)}) admits a unique global solution $f^\v(\tau,y,\xi)$
satisfying
\begin{equation}
\begin{array}{l}
\di
\sup_{\tau,y}\|f^\v(\tau,y,\xi)-\mb{M}_{[V,U,\Theta]}(\tau,y,\xi)
\|_{L^2_{\xi}(\frac{1}{\sqrt{\mb{M}_\star}})}\le
C\v^{\f15}.\\
\end{array}
\label{(4.10)}
\end{equation}
Here, $\mathcal{N}(\tau)$ is defined by \eqref{(4.11)} below.
\end{theorem}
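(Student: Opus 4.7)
The plan is to prove Theorem 4.1 by the same energy/continuity framework used for the Navier-Stokes case in Section 3, but with two essential additions: a parallel set of weighted $L^2_\xi$ estimates for the non-fluid component and a careful treatment of the extra nonlocal source terms (the microscopic fluxes $\int\xi_1^2\Pi_{1y}\,d\xi$ etc.) that appear in the fluid-type system (\ref{(2.45)}). I will set up the a priori assumption
\begin{equation}
\mathcal{N}(\tau)=\sup_{0\le\tau'\le\tau}\Bigl(\|(\phi,\psi,\zeta)(\tau',\cdot)\|_1^2+\Bigl\|\tfrac{\mb{G}_1(\tau',\cdot,\xi)}{\sqrt{\mb{M}_\star}}\Bigr\|_1^2+\Bigl\|\tfrac{\mb{G}(\tau',\cdot,\xi)}{\sqrt{\mb{M}_\star}}\Bigr\|^2\Bigr)\le\chi^2,\label{(4.11)}
\end{equation}
where $\mb{M}_\star$ is a fixed global Maxwellian whose parameters lie in a small neighborhood of the range of $(V,U,\Theta)$, and close this bound via the continuity argument. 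Since the Lagrangian scaling $(y,\tau)=(x/\v,t/\v)$ is the same as in the Navier-Stokes case, the wave profiles obey identical estimates; in particular, the interaction bound (\ref{(2.44)}) and the viscous-contact error bounds (\ref{(2.34)})-(\ref{(2.35)}) are still available, with the additional ingredient that $\mb{G}_0$ defined in (\ref{(2.49)}) is algebraically proportional to $(\Theta_y,U_y)$ and hence controlled by rarefaction-wave derivatives.

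First I would run the macroscopic energy estimate. Multiplying $(\ref{(2.45)})_1$--$(\ref{(2.45)})_4$ by the standard multipliers and employing the relative-entropy function $\Phi(z)=z-1-\ln z$ exactly as in (\ref{(3.3)})--(\ref{(3.13)}), I arrive at an identity of the form
\begin{equation}
\tfrac{d}{d\tau}\bigl[\tfrac12|\psi|^2+R\Theta\Phi(v/V)+\tfrac{R}{\gamma-1}\Theta\Phi(\theta/\Theta)\bigr]+\text{(fluid dissipation)}=\text{(good)}+\text{(microscopic source)},
\end{equation}
where the new source terms carry factors $\int\xi_1^j\Pi_{1y}\,d\xi$. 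These are absorbed by Cauchy-Schwarz against $\|\Pi_1/\sqrt{\mb{M}_\star}\|$, using (\ref{(2.48)}) and the dissipation of $\mb{L}_{\mb{M}}$, so the procedure in (\ref{(3.15)})--(\ref{(3.18)}) carries over after controlling $\mb{G}$ and $\mb{G}_1$ separately. The first-order estimate on $\phi_y$ is obtained as in (\ref{(3.19)})--(\ref{(3.25)}) by using $(\tilde v_y/\tilde v)_\tau$ and then the $H^1$ estimate on $(\psi,\zeta)$ as in (\ref{(3.26)})--(\ref{(3.30)}); the rarefaction truncation errors yield the same $\v^{7/5}$ residue, while interaction errors are exponentially small by (\ref{(2.44)}).

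Next, I would perform the microscopic estimates. Using (\ref{(2.51)}) I take the $L^2_\xi(\mb{M}_\star^{-1/2})$ inner product of the $\mb{G}_1$-equation with $\mb{G}_1/\mb{M}_\star$, and then differentiate once in $y$ and repeat. The coercivity $\langle h,\mb{L}_{\mb{M}}h\rangle\le -\sigma_0\langle\nu(|\xi|)h,h\rangle$ on $\mathfrak{N}^\perp$ from Section 1 provides the dissipation, and the source terms on the right-hand side of (\ref{(2.51)}) are controlled using the a priori smallness of $\chi$, the rarefaction derivative bounds from Lemma \ref{Lemma 2.3}, and the exponential interaction estimate. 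Because $\mb{G}_0$ itself is non-integrable in time (as noted after (\ref{(2.51)})), all estimates must be written in terms of $\mb{G}_1$, so $\mb{G}$ is recovered only in the combination $\mb{G}=\mb{G}_0+\mb{G}_1$; this is the reason for subtracting $\mb{G}_0$. This yields
\begin{equation}
\|\mb{G}_1/\sqrt{\mb{M}_\star}\|_1^2+\int_0^\tau\|\nu(|\xi|)^{1/2}\mb{G}_{1y}/\sqrt{\mb{M}_\star}\|^2\,d\tau'\le C(\chi+\delta_1)\cdot(\text{fluid dissipation})+C\v^{2/5}+\text{(heat-kernel terms)}.
\end{equation}

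Finally, to absorb the $\delta^{CD}\v(1+\v\tau)^{-1}e^{-C_0\v y^2/(1+\v\tau)}|(\phi,\zeta)|^2$-type terms generated by the non-conservative error $Q_2^{CD}$, I would apply the heat-kernel lemma (Lemma \ref{Lemma 3.2}) with $h=\tfrac{R}{\gamma-1}\zeta+P\phi$ exactly as in Lemma \ref{Lemma 3.3}; the only new contribution to $h_\tau$ is $-\int\tfrac12\xi_1|\xi|^2\Pi_{1y}\,d\xi$, which is handled by the $\mb{G}_1$-dissipation obtained in the previous step. Summing the fluid, microscopic, and heat-kernel estimates, choosing $\delta_0$, $\v_0$, $\chi$ small, and invoking Gronwall on the remaining $(\tau+\tau_0)^{-3/2}$-type integrand as in the final lines of Section 3 closes (\ref{(4.11)}) with the uniform bound of order $\v^{2/5}$, yielding the $\v^{1/5}$ pointwise rate in (\ref{(4.10)}) by Sobolev embedding. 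The main obstacle I expect is the microscopic estimate: because the weight $\mb{M}_\star^{-1/2}$ is a fixed global Maxwellian rather than the local $\mb{M}$, the commutator between $\mb{L}_{\mb{M}}$ and the weight must be dominated by the collision frequency $\nu(|\xi|)$, which requires that the oscillation of $(V,U,\Theta)$ around $(v_\star,u_\star,\theta_\star)$ (and hence $\delta^{CD}$ and the rarefaction strengths) be small enough that $\mb{M}/\mb{M}_\star$ remains bounded uniformly in $(\tau,y)$.
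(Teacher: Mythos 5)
Your plan follows the paper's general framework (micro--macro decomposition, relative-entropy multipliers, subtracting $\mb{G}_0$, heat-kernel lemma), but your a priori functional $\mathcal N(\tau)$ is genuinely too weak to close the argument, and this is not a cosmetic omission. You propose
\[
\mathcal N(\tau)=\sup_{\tau'\le\tau}\Bigl(\|(\phi,\psi,\zeta)\|_1^2+\|\mb{G}_1/\sqrt{\mb{M}_\star}\|_1^2+\|\mb{G}/\sqrt{\mb{M}_\star}\|^2\Bigr),
\]
i.e.\ only one $y$-derivative of $\mb{G}_1$ and no time derivatives or second derivatives at all. The paper's $\mathcal N$ additionally controls $\sum_{|\alpha'|=1}\int\!\!\int|\partial^{\alpha'}\mb{G}|^2/\mb{M}_\star$ (both $\partial_y$ and $\partial_\tau$) and, crucially, $\sum_{|\alpha|=2}\int\!\!\int|\partial^\alpha f|^2/\mb{M}_\star$. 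The reason these are indispensable is the structure of $\Pi_1$ from \eqref{(2.48)}: $\Pi_1=\mb{L}_{\mb{M}}^{-1}[\mb{G}_\tau-\tfrac{u_1}{v}\mb{G}_y+\tfrac1v\mb{P}_1(\xi_1\mb{G}_y)-Q(\mb{G},\mb{G})]$ already carries first derivatives of $\mb{G}$. When you run the $\phi_y$ energy estimate (the analogue of \eqref{(3.19)}--\eqref{(3.25)}), the microscopic flux appears as $\int\xi_1^2\Pi_{1y}\,d\xi$, and $\Pi_{1y}$ involves $\mb{G}_{y\tau}$ and $\mb{G}_{yy}$ (plus commutators with $\mb{L}_{\mb{M}}^{-1}$). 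The resulting Cauchy--Schwarz bound is $\sum_{|\alpha|=2}\int_0^\tau\!\!\int\!\!\int\nu^{-1}(|\xi|)|\partial^\alpha\mb{G}|^2/\mb{M}_\star$; without second-order control this term cannot be absorbed. The same issue hits the $(\psi_y,\zeta_y)$ estimate through $\int\xi_1\xi_i\Pi_{1y}\,d\xi$. You cannot simply transcribe the Navier--Stokes $H^1$ argument here, because the Navier--Stokes source $\v Q_1$ contains no derivatives of the unknown, whereas $\Pi_{1y}$ does.

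There is a second, related gap: your microscopic estimate only differentiates the $\mb{G}_1$-equation once in $y$. But $\Pi_1$ also contains $\mb{G}_\tau$, so even the zeroth-order fluid estimates require control of $\partial_\tau\mb{G}$, and the $\phi_y$ estimate requires $\partial_\tau\partial_y\mb{G}$. The paper handles this by running separate energy estimates on $\mb{G}_y$ and $\mb{G}_\tau$ (its equations \eqref{(4.56)}--\eqref{(4.58)}), on the time-derivatives of the fluid perturbation via the fluid system \eqref{(4.18)} (giving $\|(\phi_\tau,\psi_\tau,\zeta_\tau)\|^2$, then $\|(\phi_{y\tau},\ldots)\|^2$ and $\|(\phi_{\tau\tau},\ldots)\|^2$), and on $\partial^\alpha f$ for $|\alpha|=2$ directly from the scaled Boltzmann equation \eqref{(2.52)} (equations \eqref{(4.59)}--\eqref{(4.62)}). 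From the a priori bound on $\int\!\!\int|\partial^\alpha f|^2/\mb{M}_\star$ one then \emph{derives} the second-order fluid and microscopic bounds (\eqref{(4.20)}--\eqref{(4.25)}) that feed back into the Sobolev interpolations like \eqref{(4.16)}--\eqref{(4.17)}. Without that extra layer in the a priori functional and the corresponding estimates, the scheme does not close. You should enlarge $\mathcal N$ to include $\sum_{|\alpha'|=1}\int\!\!\int|\partial^{\alpha'}\mb{G}|^2/\mb{M}_\star$ and $\sum_{|\alpha|=2}\int\!\!\int|\partial^\alpha f|^2/\mb{M}_\star$ and add the corresponding energy estimates; otherwise the idea is on the right track.
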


 \begin{remark} If we choose the initial data for the Boltzmann equation
(\ref{(2.52)}) as
\begin{equation}
f^\v(0,y,\xi)=\mb{M}_{[V,U,\T]}(0,y,\xi)=\mb{M}_{[V(0,y),U(0,y),\T(0,y)]}(\xi),
\label{(4.12)}
\end{equation}
then
\begin{equation}
\mathcal{N}(\tau)|_{\tau=0}=O(1)\bigg[\|(\T_y,U_y)\|^2+\|(V_{yy},\T_{yy},U_{yy})\|^2\bigg]\bigg|_{\tau=0}=O(1)\v^{\f12}.
\label{(4.13)}
\end{equation}
In this case, the functional measuring the perturbation
$\mathcal{N}(\tau)$ at $\tau=0$ is smaller than the estimate given
in Theorem \ref{Theorem 4.1} that is of the order of $O(\v^{\f25})$
because $\v$ is small.
\end{remark}

Consider the reformulated system \eqref{(2.45)} and \eqref{(2.51)}.
Since the local existence of solution to \eqref{(2.45)} and
\eqref{(2.51)} is now standard, cf. \cite{Guo} and \cite{UYZ}, to
prove the global existence, we only need to close the following a
priori estimate by the continuity argument:
\begin{equation}
\begin{array}{ll}
\di \mathcal{N}(\tau)=&\di\sup_{0\leq \tau^\prime\leq \tau}\Bigg\{
\|(\p,\psi,\z)(\tau^\prime,\cdot)\|_1^2+\int\int\f{|\mb{G} _1|^2}{\mb{M}_\star}d\xi dy\\
&\di +\sum_{|\a^\prime|=1} \int\int\f{|\partial^{\a^\prime} \mb{G}
|^2}{\mb{M}_\star}d\xi dy+\sum_{|\a|=2}\int\int\f{|\partial^\a
f|^2}{\mb{M}_{\star}}d\xi dy\Bigg\}\leq \chi^2,
\end{array}
\label{(4.11)}
\end{equation}
where  $\partial^\a,\partial^{\a^\prime}$ denote the derivatives
with respect to $y$ and $\tau$ respectively, and $\chi$ is a small
positive constant depending on  the initial data and the strength of
the contact wave, and $\mb{M}_\star$ is a global Maxwellian to be
chosen later.

Note that the a priori assumption (\ref{(4.11)}) implies that
\begin{equation}
\|(\p,\psi,\z)\|^2_{L_\i}\leq C\chi^2, \label{(4.15)}
\end{equation}
and
\begin{equation}
\|\int\f{\mb{G} _1^2}{\mb{M}_\star}d\xi\|_{L_\i^y}\leq
C\left(\int\int\f{\mb{G} _1^2}{\mb{M}_\star}d\xi
dy\right)^{\f{1}{2}}\cdot\left(\int\int\f{|\mb{G}
_{1y}|^2}{\mb{M}_\star}d\xi dy\right)^{\f{1}{2}}\leq C(\v+\chi^2),
\label{(4.16)}
\end{equation}
and for $|\a|=1$,
\begin{equation}
\|\int\f{|\partial^{\a} \mb{G} |^2}{\mb{M}_\star}d\xi\|_{L_\i^y}\leq
C\left(\int\int\f{|\partial^{\a} \mb{G} |^2}{\mb{M}_\star}d\xi
dy\right)^{\f{1}{2}}\cdot\left(\int\int\f{|\partial^{\a} \mb{G}
_y|^2}{\mb{M}_\star}d\xi dy\right)^{\f{1}{2}}\leq
C(\v+\chi^2).\label{(4.17)}
\end{equation}
From (\ref{(1.17)}) and (\ref{(2.41)}), we have
\begin{equation}
\left\{
\begin{array}{l}
\di \p_{\tau}-\psi_{1y}=0,\\
\di \psi_{1\tau}+(p-P)_y
              =-\f{4}{3}(\f{\mu(\T)}{V}U_{1y})_y-\v Q_{1}-\int\xi_1^2\mb{G} _yd\xi,\\
\di \psi_{i\tau}=-(\f{\mu(\T)}{V}U_{iy})_y-\int\xi_1\xi_i\mb{G}_yd\xi,~~i=2,3,\\
\di \z_{\tau}+(pu_{1y}-PU_{1y})=-(\f{\lambda(\T)}{V}\T_y)_y
              -\f{4}{3}\f{\mu(\T)}{V}U_{1y}^2-\v Q_{2}\\
\di \qquad -\v Q_1 U_1-\f{1}{2}\int\xi_1|\xi|^2\mb{G}
_yd\xi+\sum_{i=1}^3u_i\int\xi_1\xi_i\mb{G} _yd\xi.
\end{array}
\right. \label{(4.18)}
\end{equation}
Thus
\begin{equation}
\|(\p_{\tau},\psi_{\tau},\z_{\tau})\|^2\leq C(\v+\chi^2).
\label{(4.19)}
\end{equation}
Hence, we have
\begin{equation}
\|(v_{\tau},u_{\tau},\t_{\tau})\|^2\leq
C\|(\p_{\tau},\psi_{\tau},\z_{\tau})\|^2+C\|(V_{\tau},U_{\tau},\T_{\tau})\|^2\leq
C(\v+\chi^2). \label{(4.20)}
\end{equation}
In addition,  (\ref{(4.11)}) also implies that
\begin{equation}
\|(v_y,u_y,\t_y)\|^2\leq
C\|(\p_y,\psi_y,\z_y)\|^2+C\|(V_y,U_y,\T_y)\|^2\leq C(\v+\chi^2).
\label{(4.21)}
\end{equation}
Since
\begin{equation}
\|\partial^\a\left(\rho,\rho u,\rho(E+\f{|u|^2}{2})\right)\|^2 \leq
C\int\int\f{|\partial^\a f|^2}{\mb{M}_\star}d\xi dy\le C\chi^2,
\label{(4.22)}
\end{equation}
the inequalities (\ref{(4.20)})-(\ref{(4.22)}) give
\begin{equation}
\begin{array}{ll}
\di\|\partial^\a(v,u,\t)\|^2&\di\leq C\|\partial^\a\left(\rho,\rho
u,\rho(E+\f{|u|^2}{2})\right)\|^2\\
&\di\quad\quad +C\sum_{|\a|=1}\int|\partial^{\a}\left(\rho,\rho
u,\rho(E+\f{|u|^2}{2})\right)|^4dy\\
 &\di\leq C(\v+\chi^2).
\end{array}
\label{(4.23)}
\end{equation}
Thus, for $|\a|=2$, we have
\begin{equation}
\|\partial^\a(\p,\psi,\z)\|^2\leq
C(\|\partial^\a(v,u,\t)\|^2+\|\partial^\a(V,U,\T)\|^2)\leq
C(\v+\chi^2). \label{(4.24)}
\end{equation}
Finally, from the fact that $f=\mb{M}+\mb{G} $, we can obtain for
$|\a|=2$,
\begin{equation}
\begin{array}{l}
\di \int\int\f{|\partial^\a\mb{G} |^2}{\mb{M}_\star}d\xi dy\leq
C\int\int\f{|\partial^\a f|^2}{\mb{M}_\star}d\xi dy+C
\int\int\f{|\partial^\a\mb{M}|^2}{\mb{M}_\star}d\xi
dy\\
\di\quad \leq C\int\int\f{|\partial^\a f|^2}{\mb{M}_\star}d\xi
dy+C\|\partial^\a(v,u,\t)\|^2+C\sum_{|\a^\prime|=1}\int|\partial^{\a^\prime}(v,u,\t)|^4dy\\
\quad \leq C(\v+\chi^2).
\end{array}
\label{(4.25)}
\end{equation}
Before proving the a priori estimate (\ref{(4.11)}), we list some
basic lemmas based on the celebrated H-theorem for later use. The
first lemma is from \cite{GPS}.

 \begin{lemma}\label{Lemma 4.1} There exists a positive
constant $C$ such that
$$
\int\f{\nu(|\xi|)^{-1}Q(f,g)^2}{\tilde{\mb{M}}}d\xi\le
C\left\{\int\f{\nu(|\xi|)f^2}{\tilde{\mb{M}}}d\xi\cdot\int\f{g^2}{\tilde{\mb{M}}}d\xi+
\int\f{f^2}{\tilde{\mb{M}}}d\xi\cdot\int\f{\nu(|\xi|)g^2}{\tilde{\mb{M}}}d\xi\right\},
$$
where $\tilde{\mb{M}}$ can be any Maxwellian so that the above
integrals are well defined.
\end{lemma}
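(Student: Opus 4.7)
The plan is to decompose $Q(f,g) = Q_+(f,g) - Q_-(f,g)$ into the standard gain and loss parts and estimate each separately using Cauchy--Schwarz with weights adapted to the Maxwellian structure. The elementary input used throughout is that, for both kernels considered in the paper,
\begin{equation*}
\int_{\mb{R}^3}\!\!\int_{\mb{S}^2_+}\tilde{\mb{M}}(\xi_*)\,B(|\xi-\xi_*|,\hat\t)^k\,d\Gamma\,d\xi_* \le C\,\nu(|\xi|)^k,\qquad k=1,2,
\end{equation*}
which follows from the pointwise bound $B\le C(1+|\xi-\xi_*|)^{\kappa_0}b(\hat\t)$ with $b\in L^1(\mb{S}^2_+)$, the growth law $\nu(|\xi|)\sim(1+|\xi|)^{\kappa_0}$ with $\kappa_0\in[0,1]$, and the Gaussian decay of $\tilde{\mb{M}}$.

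For the loss part $Q_-(f,g)(\xi)=\f12[f(\xi)\nu_g(\xi)+g(\xi)\nu_f(\xi)]$ with $\nu_h(\xi):=\int\!\!\int h(\xi_*)B\,d\xi_*d\Gamma$, a single Cauchy--Schwarz against $\tilde{\mb{M}}(\xi_*)$ gives
\begin{equation*}
\nu_h(\xi)^2 \le \Bigl(\int\f{h(\xi_*)^2}{\tilde{\mb{M}}(\xi_*)}d\xi_*\Bigr)\Bigl(\int\!\!\int \tilde{\mb{M}}(\xi_*)B^2 d\xi_*d\Gamma\Bigr) \le C\nu(|\xi|)^2\|h\|^2_{L^2(\tilde{\mb{M}}^{-1})}.
\end{equation*}
Squaring $Q_-$, multiplying by $\nu(|\xi|)^{-1}/\tilde{\mb{M}}(\xi)$, and integrating in $\xi$ produces precisely the right--hand side of the lemma for this contribution.

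For the gain part the central device is the Maxwellian collision invariance $\tilde{\mb{M}}(\xi')\tilde{\mb{M}}(\xi_*')=\tilde{\mb{M}}(\xi)\tilde{\mb{M}}(\xi_*)$, inherited from the conservation of momentum and energy. I would apply Cauchy--Schwarz to
\begin{equation*}
Q_+(f,g)(\xi)=\f12\int\!\!\int [f(\xi')g(\xi_*')+f(\xi_*')g(\xi')]\,B\,d\xi_*d\Gamma
\end{equation*}
by writing $B=[\tilde{\mb{M}}(\xi')\tilde{\mb{M}}(\xi_*')B]^{1/2}\cdot[B/(\tilde{\mb{M}}(\xi')\tilde{\mb{M}}(\xi_*'))]^{1/2}$. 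The Maxwellian--weighted factor collapses, by collision invariance, to $\tilde{\mb{M}}(\xi)\int\!\!\int\tilde{\mb{M}}(\xi_*)B\,d\xi_*d\Gamma\le C\tilde{\mb{M}}(\xi)\nu(|\xi|)$. Dividing by $\nu(|\xi|)\tilde{\mb{M}}(\xi)$ and integrating in $\xi$, the measure--preserving involution $(\xi,\xi_*)\leftrightarrow(\xi',\xi_*')$ at fixed $\Gamma$ converts the remaining triple integral into
\begin{equation*}
C\int\!\!\int\!\!\int \f{f(\xi)^2g(\xi_*)^2+f(\xi_*)^2g(\xi)^2}{\tilde{\mb{M}}(\xi)\tilde{\mb{M}}(\xi_*)}\,B(|\xi-\xi_*|,\hat\t)\,d\xi\,d\xi_*\,d\Gamma,
\end{equation*}
since both $|\xi-\xi_*|$ and $\tilde{\mb{M}}(\xi)\tilde{\mb{M}}(\xi_*)$ are collisional invariants. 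Finally, the sub--additive bound $(1+|\xi-\xi_*|)^{\kappa_0}\le C[(1+|\xi|)^{\kappa_0}+(1+|\xi_*|)^{\kappa_0}]$, valid since $\kappa_0\le1$, combined with $\int_{\mb{S}^2_+}b(\hat\t)d\Gamma<\infty$, separates the remaining double integral into precisely the two product--type terms on the right--hand side of the lemma, with the $\nu$--weight landing on exactly one of $f$ or $g$ in each.

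The main obstacle is the gain--term bookkeeping: choosing the Cauchy--Schwarz split so that the Maxwellian invariance can be invoked cleanly, tracking the angular factor $b(\hat\t)$ through the pre/post--collisional change of variables (where $\hat\t\mapsto\pi-\hat\t$ and $\mb{S}^2_+$ itself depends on $\xi-\xi_*$), and splitting $(1+|\xi-\xi_*|)^{\kappa_0}$ sub--additively so that the asymmetric $\nu$--weighted structure of the claim emerges rather than a symmetric one.
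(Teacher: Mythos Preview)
The paper does not prove this lemma at all: it simply cites it from Golse--Perthame--Sulem \cite{GPS} and moves on. Your sketch is therefore not competing with any argument in the paper, but it is essentially the standard proof of this estimate, and the strategy---gain/loss splitting, weighted Cauchy--Schwarz, Maxwellian collision invariance $\tilde{\mb{M}}(\xi')\tilde{\mb{M}}(\xi_*')=\tilde{\mb{M}}(\xi)\tilde{\mb{M}}(\xi_*)$, the measure-preserving involution $(\xi,\xi_*)\leftrightarrow(\xi',\xi_*')$, and the sub-additive splitting of $(1+|\xi-\xi_*|)^{\kappa_0}$---is correct and complete in outline.

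One small technical point: your preliminary bound $\int\!\!\int \tilde{\mb{M}}(\xi_*)B^2\,d\Gamma\,d\xi_*\le C\nu(|\xi|)^2$ does \emph{not} follow from $b\in L^1(\mb{S}^2_+)$ alone for the inverse-power-law kernel, since squaring $B$ produces $b(\hat\t)^2$ and the paper assumes only $b\in L^1$, not $L^2$. This affects only your treatment of the loss term. The fix is immediate: integrate the angular variable first, so that $\nu_h(\xi)=\int h(\xi_*)\bar B(|\xi-\xi_*|)\,d\xi_*$ with $\bar B(r)=\|b\|_{L^1}r^{\kappa_0}$, and then apply Cauchy--Schwarz in $\xi_*$ only. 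The gain-term argument is unaffected, since there you split $B=B^{1/2}\cdot B^{1/2}$ and never square the angular factor. With this adjustment your proof goes through.
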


Based on Lemma \ref{Lemma 4.1}, the following three lemmas are taken
from \cite{Liu-Yang-Yu-Zhao}. And the proofs are straightforward by
using Cauchy inequality.

\begin{lemma}\label{Lemma 4.2} If $\t/2<\t_\star<\t$, then there exist two
positive constants $\sigma=\sigma(v,u,\t;\break
v_\star,u_\star,\t_\star)$ and
$\eta_0=\eta_0(v,u,\t;v_\star,u_\star,\t_\star)$ such that if
$|v-v_\star|+|u-u_\star|+|\t-\t_\star|<\eta_0$, we have for
$h(\xi)\in  \mathfrak{N}^\bot$,
$$
-\int\f{h\mb{L}_\mb{M}h}{\mb{M}_\star}d\xi\geq
\sigma\int\f{\nu(|\xi|)h^2}{\mb{M}_\star}d\xi.
$$
\end{lemma}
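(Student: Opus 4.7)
The plan is to derive the weighted coercivity with $\mathbf{M}_\star^{-1}$ as a perturbation of the classical Grad coercivity with $\mathbf{M}^{-1}$ already recalled in the introduction, namely that for $h\in\mathfrak{N}^\perp$ one has
$$-\int \frac{h\mathbf{L}_\mathbf{M} h}{\mathbf{M}}\,d\xi\geq \sigma_0\int \frac{\nu(|\xi|)h^2}{\mathbf{M}}\,d\xi.$$
The first step is to rewrite the target inequality by introducing the pointwise ratio
$$r(\xi):=\frac{\mathbf{M}(\xi)}{\mathbf{M}_\star(\xi)}=\frac{\rho}{\rho_\star}\Big(\frac{\theta_\star}{\theta}\Big)^{3/2}\exp\!\Big[-\frac{|\xi-u|^2}{2R\theta}+\frac{|\xi-u_\star|^2}{2R\theta_\star}\Big],$$
so that the $\mathbf{M}_\star^{-1}$-weighted integrals become the $\mathbf{M}^{-1}$-weighted integrals multiplied by $r$. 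The assumption $\theta_\star>\theta/2$ ensures that the quadratic exponent in $r$ has coefficient strictly less than $1/(4R\theta)$, which is the sharp threshold for the integrability of $\int\nu\,\mathbf{M}^2/\mathbf{M}_\star\,d\xi$ and of the companion weighted Hilbert--Schmidt norms that will appear.

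The second step is to use Grad's decomposition $\mathbf{L}_\mathbf{M}=-\nu(|\xi|)I+K_\mathbf{M}$, where $K_\mathbf{M}$ is a self-adjoint compact integral operator on $L^2(\mathbf{M}^{-1}d\xi)$ whose kernel depends smoothly on $(v,u,\theta)$. This rewrites the target as
$$-\int \frac{h\mathbf{L}_\mathbf{M} h}{\mathbf{M}_\star}d\xi=\int \frac{\nu h^2}{\mathbf{M}_\star}d\xi-\int \frac{hK_\mathbf{M} h}{\mathbf{M}_\star}d\xi,$$
so the whole game is to absorb the second term into a fraction (say $1-\sigma/\sigma_0$) of the first, for $h\in\mathfrak{N}^\perp$. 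When the parameters coincide, so that $\mathbf{M}=\mathbf{M}_\star$ and $r\equiv 1$, this is precisely Grad's estimate with constant $\sigma_0$. For nearby parameters, I would argue by continuity: the operator $\widetilde{K}:=\nu^{-1/2}K_\mathbf{M}\nu^{-1/2}$, viewed as an operator on $L^2(\mathbf{M}_\star^{-1}d\xi)$, depends continuously on $(v,u,\theta)$ in operator norm thanks to the integrability afforded by $\theta_\star>\theta/2$; likewise the five orthonormal macroscopic modes $\chi_j$ depend continuously on the parameters. Shrinking $\eta_0$ therefore drives both the operator and the projection onto $\mathfrak{N}$ arbitrarily close to their reference values at $\mathbf{M}_\star$, and the coercivity survives with, say, $\sigma=\sigma_0/2$.

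The main obstacle I anticipate is exactly this passage between the two weighted spaces: the ratio $r(\xi)$ grows like a Gaussian when $\theta_\star<\theta$ strictly, so naive pointwise bounds on $K_\mathbf{M}$ do not suffice, and one must split the $\xi$-integral and apply Cauchy--Schwarz with carefully balanced weights before invoking the threshold $\theta_\star>\theta/2$. A secondary difficulty is that $\mathfrak{N}$ is the null space of $\mathbf{L}_\mathbf{M}$, not of $\mathbf{L}_{\mathbf{M}_\star}$; one has to verify that the orthogonality $\mathbf{P}_0h=0$ (with projection defined through $\mathbf{M}$) gives a controlled orthogonality relative to $\mathbf{M}_\star$, the error being quantified by $\eta_0$. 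Both issues enter the dependence of $\sigma$ and $\eta_0$ on the reference parameters, explaining why the constants degenerate as $\theta\to 2\theta_\star$.
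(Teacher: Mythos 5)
The paper does not prove Lemma~\ref{Lemma 4.2} at all: it is cited verbatim from Liu--Yang--Yu--Zhao, with the remark that ``based on Lemma~\ref{Lemma 4.1}, \dots the proofs are straightforward by using Cauchy inequality.'' So there is no in-paper proof to compare against, but the intended route is signposted, and it differs from yours in a useful way.

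Your plan is a perturbation argument through Grad's decomposition $\mathbf{L}_\mathbf{M}=-\nu I+K_\mathbf{M}$: establish that the conjugated compact part $\nu^{-1/2}K_\mathbf{M}\nu^{-1/2}$ is continuous in operator norm on $L^2(\mathbf{M}_\star^{-1}d\xi)$, that the macroscopic modes $\chi_j$ move continuously, and deduce coercivity near $\mathbf{M}=\mathbf{M}_\star$. This is sound in principle, and you correctly spot the two real obstacles: the Gaussian growth of $\mathbf{M}/\mathbf{M}_\star$, resolved precisely by $\theta_\star>\theta/2$, and the mismatch between $\mathfrak{N}$ (the kernel of $\mathbf{L}_\mathbf{M}$) and $\mathfrak{N}_\star$. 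But the operator-norm continuity of the weighted $K_\mathbf{M}$ is itself a nontrivial estimate that requires going into the explicit kernel of $K_\mathbf{M}$ and verifying uniform Gaussian bounds there; you assert it rather than supply it.

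The route the paper points to is more elementary and sidesteps the kernel analysis entirely. One writes $\mathbf{L}_\mathbf{M} h-\mathbf{L}_{\mathbf{M}_\star}h=2Q(\mathbf{M}-\mathbf{M}_\star,h)$ and applies Lemma~\ref{Lemma 4.1} with $\tilde{\mathbf{M}}=\mathbf{M}_\star$, $f=\mathbf{M}-\mathbf{M}_\star$, $g=h$, together with Cauchy--Schwarz. The smallness is then immediate: the controlling quantity is $\int \nu\,|\mathbf{M}-\mathbf{M}_\star|^2/\mathbf{M}_\star\,d\xi$, which is finite exactly when $\theta_\star>\theta/2$ (since $\mathbf{M}^2/\mathbf{M}_\star$ is integrable iff $\theta<2\theta_\star$), and which tends to zero as $O(\eta_0^2)$ as the states coalesce. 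For the unperturbed term one invokes Grad's estimate for $\mathbf{L}_{\mathbf{M}_\star}$ on $\mathfrak{N}_\star^{\perp}$ in the $\mathbf{M}_\star$-weighted norm, and the discrepancy between $\mathfrak{N}^\perp$ and $\mathfrak{N}_\star^\perp$ is again $O(\eta_0)$. So what the paper's route buys over yours is concreteness: Lemma~\ref{Lemma 4.1} is a ready-made bilinear estimate that converts the difference of the linearized operators into an explicit integral whose smallness and finiteness threshold can be checked by hand, whereas your route defers that work to an abstract continuity claim about $K_\mathbf{M}$. Both arguments hinge on exactly the same two observations you identified, so your proposal is correct in outline; it is simply not the most economical way to reach the conclusion, and the continuity step you gloss over is where the bulk of the remaining work would lie if you pursued it.
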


 \begin{lemma}\label{Lemma 4.3} Under the assumptions in Lemma \ref{Lemma 4.2}, we
have  for each $h(\xi)\in  \mathfrak{N}^\bot$,
$$
\left\{
\begin{array}{l}
\di \int\f{\nu(|\xi|)}{\mb{M}}|\mb{L}_\mb{M}^{-1}h|^2d\xi
\leq \sigma^{-2}\int\f{\nu(|\xi|)^{-1}h^2}{\mb{M}}d\xi,\\
\di \int\f{\nu(|\xi|)}{\mb{M}_\star}|\mb{L}_\mb{M}^{-1}h|^2d\xi\le
\sigma^{-2}\int\f{\nu(|\xi|)^{-1}h^2}{\mb{M}_\star}d\xi.
\end{array}
\right.
$$
\end{lemma}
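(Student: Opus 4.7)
The plan is to derive both weighted $L^2$ bounds on $\mb{L}_\mb{M}^{-1}$ directly from the spectral coercivity of $\mb{L}_\mb{M}$ combined with a weighted Cauchy--Schwarz inequality in $\xi$. Throughout, set $g := \mb{L}_\mb{M}^{-1} h$, so that by construction $g \in \mathfrak{N}^\bot$ and $\mb{L}_\mb{M} g = h$; both bounds then amount to controlling $\|\nu^{1/2} g\|_{L^2(d\xi/\tilde{\mb{M}})}$ by $\|\nu^{-1/2} h\|_{L^2(d\xi/\tilde{\mb{M}})}$ with $\tilde{\mb{M}} \in \{\mb{M},\mb{M}_\star\}$.

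For the first inequality (weight $\mb{M}$), I would start from the Grad-type coercivity recalled earlier in the paper, $\langle g,\mb{L}_\mb{M} g\rangle \le -\sigma_0 \langle \nu(|\xi|) g,g\rangle$, which unpacked reads $-\int \f{g\,\mb{L}_\mb{M} g}{\mb{M}}\, d\xi \ge \sigma_0 \int \f{\nu(|\xi|) g^2}{\mb{M}}\, d\xi$. Substituting $\mb{L}_\mb{M} g = h$ on the left and factoring the integrand as $\f{\nu^{1/2}|g|}{\mb{M}^{1/2}}\cdot \f{\nu^{-1/2}|h|}{\mb{M}^{1/2}}$, a Cauchy--Schwarz inequality yields
$$
\sigma_0 \int \f{\nu g^2}{\mb{M}}\, d\xi \;\le\; \Big(\int \f{\nu g^2}{\mb{M}}\, d\xi\Big)^{1/2}\Big(\int \f{\nu^{-1} h^2}{\mb{M}}\, d\xi\Big)^{1/2}.
$$
Dividing by the first factor on the right (when positive; otherwise $g=0$ a.e., forcing $h=0$ and the bound is trivial) and squaring gives the desired bound with the constant $\sigma_0^{-2}$, which is then absorbed into $\sigma^{-2}$ after relabelling.

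For the second inequality (weight $\mb{M}_\star$), the argument is structurally identical, only the coercivity step changes: one replaces the Grad bound by Lemma 4.2, which under the closeness assumption $\t/2<\t_\star<\t$ and $|v-v_\star|+|u-u_\star|+|\t-\t_\star|<\eta_0$ gives $-\int \f{g\,\mb{L}_\mb{M} g}{\mb{M}_\star}\, d\xi \ge \sigma \int \f{\nu(|\xi|) g^2}{\mb{M}_\star}\, d\xi$. The same Cauchy--Schwarz splitting with $\mb{M}$ replaced by $\mb{M}_\star$ throughout then produces the second bound with constant $\sigma^{-2}$.

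\emph{Main obstacle.} There is no deep obstacle here; the lemma is a textbook corollary of spectral coercivity plus Cauchy--Schwarz. The only subtle point is that $\mb{L}_\mb{M}$ is self-adjoint and coercive with respect to the $\mb{M}$-weighted inner product, not the $\mb{M}_\star$-weighted one, so for the second inequality the coercivity must be transplanted to the $\mb{M}_\star$-weighted setting. This transplantation is exactly what Lemma 4.2 supplies (via the comparability of $\mb{M}$ and $\mb{M}_\star$ under the closeness hypothesis), so once Lemma 4.2 is invoked the proof reduces to the same one-line Cauchy--Schwarz argument as in the first case.
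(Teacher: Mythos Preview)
Your proposal is correct and matches the paper's approach: the paper does not spell out a proof but states that Lemmas 4.2--4.4 are taken from \cite{Liu-Yang-Yu-Zhao} and that ``the proofs are straightforward by using Cauchy inequality,'' which is exactly the coercivity-plus-Cauchy--Schwarz argument you outline. Your remark that the second inequality requires the transplanted coercivity of Lemma \ref{Lemma 4.2} (rather than the raw Grad estimate) is precisely the point, and your handling of the constant relabelling $\sigma_0 \to \sigma$ for the first inequality is fine since Lemma \ref{Lemma 4.2} applied with $\mb{M}_\star=\mb{M}$ recovers the Grad bound.
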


\begin{lemma}\label{Lemma 4.4} Under the conditions in Lemma \ref{Lemma 4.2},  for any
positive constants $k$ and $\lambda$, it holds that
$$
|\int\f{g_1\mb{P}_1(|\xi|^kg_2)}{\mb{M}_\star}d\xi-\int\f{g_1|\xi|^kg_2}{\mb{M}_\star}d\xi|\le
C_{k,\lambda}\int\f{\lambda|g_1|^2+\lambda^{-1}|g_2|^2}{\mb{M}_\star}d\xi,
$$
where the constant $C_{k,\lambda}$ depends on $k$ and $\lambda$.
\end{lemma}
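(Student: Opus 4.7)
The plan is to reduce the claim to a standard Cauchy--Schwarz/Young argument by using $\mb{P}_1 = I - \mb{P}_0$. Since
$$
g_1\mb{P}_1(|\xi|^kg_2) - g_1|\xi|^kg_2 = -g_1\mb{P}_0(|\xi|^kg_2),
$$
the inequality is equivalent to the single-sided bound
$$
\left|\int\f{g_1\mb{P}_0(|\xi|^kg_2)}{\mb{M}_\star}d\xi\right| \le C_{k,\lambda}\int\f{\lambda|g_1|^2 + \lambda^{-1}|g_2|^2}{\mb{M}_\star}d\xi.
$$
The first step is to expand the projection in the orthonormal basis defined earlier, $\mb{P}_0(|\xi|^kg_2) = \sum_{j=0}^4 \langle |\xi|^kg_2,\chi_j\rangle \chi_j$, which turns the left-hand side into the finite sum
$$
\sum_{j=0}^4\left(\int\f{|\xi|^kg_2\chi_j}{\mb{M}}d\xi\right)\left(\int\f{g_1\chi_j}{\mb{M}_\star}d\xi\right).
$$

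The second step is to distribute the weight $\sqrt{\mb{M}_\star}$ and apply Cauchy--Schwarz to each of the two integrals separately:
$$
\int\f{|\xi|^kg_2\chi_j}{\mb{M}}d\xi = \int\f{g_2}{\sqrt{\mb{M}_\star}}\cdot\f{|\xi|^k\chi_j\sqrt{\mb{M}_\star}}{\mb{M}}d\xi, \qquad \int\f{g_1\chi_j}{\mb{M}_\star}d\xi = \int\f{g_1}{\sqrt{\mb{M}_\star}}\cdot\f{\chi_j}{\sqrt{\mb{M}_\star}}d\xi.
$$
Combining with the triangle inequality over the five basis elements yields
$$
\left|\int\f{g_1\mb{P}_0(|\xi|^kg_2)}{\mb{M}_\star}d\xi\right| \le C_k\left(\int\f{g_1^2}{\mb{M}_\star}d\xi\right)^{1/2}\left(\int\f{g_2^2}{\mb{M}_\star}d\xi\right)^{1/2},
$$
where
$$
C_k^2 = \sum_{j=0}^4\left(\int\f{|\xi|^{2k}\chi_j^2\mb{M}_\star}{\mb{M}^2}d\xi\right)\left(\int\f{\chi_j^2}{\mb{M}_\star}d\xi\right).
$$
Finally, Young's inequality $ab \le \tfrac{\lambda}{2}a^2 + \tfrac{1}{2\lambda}b^2$ converts the product of $L^2$ norms into the stated linear combination, giving $C_{k,\lambda} = C_k/2$.

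The substantive step is showing that $C_k < \infty$ uniformly on the neighbourhood where the argument is applied. From the explicit formulas $\chi_j = P_j(\xi)\mb{M}/\sqrt{\rho}$ with $P_j$ polynomials in $\xi$ of degree at most $2$, the integrands in $C_k^2$ become a polynomial in $\xi$ times $\mb{M}_\star$ (in the first factor, always integrable) and a polynomial times $\mb{M}^2/\mb{M}_\star$ (in the second factor). The delicate point is the second factor: comparing Gaussian exponents, $\mb{M}^2/\mb{M}_\star$ decays if and only if $1/(R\t) - 1/(2R\t_\star) > 0$, i.e.\ $\t_\star > \t/2$, which is precisely the standing assumption inherited from Lemma~\ref{Lemma 4.2}. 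The closeness hypothesis $|v-v_\star|+|u-u_\star|+|\t-\t_\star| < \eta_0$ from that lemma also bounds $\rho$ away from zero and controls the coefficients in $P_j$, so $C_k$ is finite and depends only on $k$ and on the fixed neighbourhood around $(v_\star,u_\star,\t_\star)$; this dependence is absorbed into $C_{k,\lambda}$. Everything else is routine manipulation.
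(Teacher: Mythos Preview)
Your argument is correct and matches the approach the paper indicates: the paper does not actually prove Lemma~\ref{Lemma 4.4} but refers to \cite{Liu-Yang-Yu-Zhao} and states that the proof is ``straightforward by using Cauchy inequality,'' which is precisely what you carry out via $\mb{P}_1=I-\mb{P}_0$, the finite expansion of $\mb{P}_0$, Cauchy--Schwarz, and Young. One cosmetic slip: from the triangle inequality you obtain $C_k=\sum_{j}(\cdots)^{1/2}(\cdots)^{1/2}$ rather than the displayed formula for $C_k^2$, but this does not affect finiteness or the conclusion.
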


With the above preparation, we are ready to perform the energy
estimation as follows. Firstly, similar to \eqref{(3.11)}, we can
get
\begin{equation}
\begin{array}{l}
\di\left(\sum_{i=1}^3\f12\psi_i^2+R\T\Phi(\f{v}{V})+\T
\Phi(\f{\t}{\T})\right)_{\tau} +\f{4}{3}\f{\mu(\t)}{v}\psi_{1y}^2
+\sum_{i=2}^3\f{\mu(\t)}{v}\psi_{iy}^2+\f{\lambda(\t)}{v\t}\z_y^2\\
\di+P(U^{R_1}_{1y}+U^{R_3}_{1y})\bigg[\Phi(\f{\t
V}{v\T})+\f53\Phi(\f vV)\bigg]=-PU^{CD}_{1y}\bigg[\Phi(\f{\t
V}{v\T})+\f53\Phi(\f
vV)\bigg]\\
\di+\bigg[(\f{\l(\T)\T_y}{V})_y+\f43\f{\mu(\T)U_{1y}^2}{V}+\v
Q_2\bigg]\bigg[\f23\Phi(\f{v}{V})-\Phi(\f\T\t)\bigg]-\f{4}{3}(\f{\mu(\t)}{v}-\f{\mu(\T)}{V})
U_{1y}\psi_{1y} \\
\di -\f{\z_y}{\t}(\f{\lambda(\t)}{v}
-\f{\lambda(\T)}{V})\T_y+\f{\z\t_y}{\t^2}(\f{\lambda(\t)\t_y}{v}-\f{\lambda(\T)\T_y}{V})
+\f{4\z}{3\t}(\f{\mu(\t)}{v}u_{1y}^2-\f{\mu(\T)}{V}U_{1y}^2)\\[0.3cm]
\di+\f{\z}{\t}\sum_{i=2}^3\f{\mu(\t)}{v}u_{iy}^2-\f{\z}{\t}(\v
Q_2-\v Q_1U_1)-\v Q_{1}\psi_1+N_1+(\cdots)_y,
\end{array}
\label{(4.26)}
\end{equation}
where
\begin{equation}
N_1=-\sum_{i=1}^3\psi_i\int\xi_1\xi_i\Pi_{1y}d\xi+
\f{\z}{\t}(\sum_{i=1}^3u_i\int\xi_1\xi_i\Pi_{1y}d\xi-\f12\int\xi_1|\xi|^2
\Pi_{1y}d\xi). \label{(4.27)}
\end{equation}
The estimation on  the macroscopic terms in \eqref{(4.26)} is almost
same as \eqref{(3.18)} for the compressible Navier-Stokes equations
so that we have
\begin{equation}
\begin{array}{ll}
&\di \|(\phi,\psi,\zeta)(\tau,\cdot)\|^2+\int_0^\tau\Big[\|(\psi_y,\zeta_y)\|^2+\|\sqrt{(U^{R_1}_{1y},U^{R_3}_{1y})}(\phi,\zeta)\|^2\Big]d\tau\\
&\di \leq
C\int_0^\tau(\tau+\tau_0)^{-2}\|(\phi,\zeta)\|^2d\tau+C\v^{\f25}\\
&\di \quad
+C\d^{CD}\v\int_0^\tau\int_{\mathbf{R}}(1+\v\tau)^{-1}e^{-\f{C_0\v
y^2}{1+\v\tau}}(\phi^2+\zeta^2)dyd\tau+\int_0^\tau\int N_1 dyd\tau.
\end{array}
 \label{(4.28)}
\end{equation}
Now we estimate the microscopic term $\di \int_0^\tau\int
N_1dyd\tau$ in \eqref{(4.28)}. For this,  we only estimate the term
$\di T_1=:-\int_0^\tau\int \psi_1\int\xi_1^2\Pi_{1y}d\xi dyd\tau$
because other terms in $\di \int_0^\tau\int N_1dyd\tau$ can be
estimated similarly.

For $T_1$, integration by parts with respect to $y$ and Cauchy
inequality yield
\begin{equation}
\begin{array}{ll}
T_1&\di =\int_0^\tau\int\psi_{1y}\int\xi_1^2\Pi_{1}d\xi dyd\tau\\
&\di \leq \b
\int_0^\tau\|\psi_{1y}\|^2d\tau+C_\b\int_0^\tau\int|\int\xi_1^2\Pi_1d\xi|^2dyd\tau.
\end{array}\label{(4.29)}
\end{equation}
By \eqref{(2.49)}, we have
\begin{equation}
\begin{array}{ll}
\di  \int_0^\tau\int|\int\xi_1^2\Pi_1d\xi|^2dyd\tau\\
 \di \leq C
\int_0^\tau\int|\int\xi_1^2\mb{L}_\mb{M}^{-1}(\mb{G}_{\tau})d\xi|^2dyd\tau+C\int_0^\tau\int|\int\xi_1^2\mb{L}_\mb{M}^{-1}(\f{u_1}v\mb{G}
_y)d\xi|^2dyd\tau\\
\di+C\int_0^\tau\int|\int\xi_1^2\mb{L}_\mb{M}^{-1}[\f{1}{v}\mb{P}_1(\xi_1\mb{G}
_y)]d\xi|^2dyd\tau
+C\int_0^\tau\int|\int\xi_1^2\mb{L}_\mb{M}^{-1}[Q(\mb{G} ,\mb{G} )]d\xi|^2dyd\tau\\
\di :=\sum_{i=1}^4 T_1^i.
\end{array}\label{(4.30)}
\end{equation}
Let $\mb{M}_\star$ be a global Maxwellian with its state
$(v_\star,u_\star,\t_\star)$ satisfying $\f12\t<\t_\star<\t$ and
$|v-v_\star|+|u-u_\star|+|\t-\t_\star|\le \eta_0$ so that Lemma
\ref{Lemma 4.2} holds. Then we can obtain
\begin{equation}
\begin{array}{ll}
T_1^1&\di \leq
C\int_0^\tau\int|\int\f{\nu(|\x|)|\mb{L}_\mb{M}^{-1}\mb{G}_{\tau}|^2}{\mb{M}_\star}
d\x\cdot \int \nu^{-1}(|\x|)\x_1^4\mb{M}_\star d\x| dyd\tau\\
&\di\leq C\int_0^\tau\int\int
\f{\nu^{-1}(|\x|)|\mb{G}_{\tau}|^2}{\mb{M}_\star}d\x dyd\tau.
\end{array}\label{(4.31)}
\end{equation}
Similarly,
\begin{equation}
T_1^2 \leq C\int_0^\tau\int\int
\f{\nu^{-1}(|\x|)|\mb{G}_{y}|^2}{\mb{M}_\star}d\x
dyd\tau.\label{(4.32)}
\end{equation}
Moreover,
\begin{equation}
\begin{array}{ll}
\di T_1^3 \leq
C\int_0^\tau\int|\int\f{\nu(|\x|)|\mb{L}_\mb{M}^{-1}[\f{1}{v}\mb{P}_1(\xi_1\mb{G}
_y)]|^2}{\mb{M}_{[2v_\star,2u_\star,2\t_\star]}} d\x\cdot \int
\nu^{-1}(|\x|)\x_1^4\mb{M}_{[2v_\star,2u_\star,2\t_\star]}d\x|
dyd\tau\\
\quad\di \leq C\int_0^\tau\int\int
\f{\nu^{-1}(|\x|)|\f{1}{v}\mb{P}_1(\xi_1\mb{G}
_y)|^2}{\mb{M}_{[2v_\star,2u_\star,2\t_\star]}}d\x dyd\tau\\
\di\quad  \leq C\int_0^\tau\int\int \f{\nu^{-1}(|\x|)|\mb{G}
_y|^2}{\mb{M}_\star}d\x dyd\tau.
\end{array}
\label{(4.33)}
\end{equation}
From Lemma \ref{Lemma 4.1}, we have
\begin{equation}
\begin{array}{ll}
T_1^4 &\di \leq C\int_0^\tau\int\int\f{\nu^{-1}(|\x|)|Q(\mb{G}
,\mb{G}
)|^2}{\mb{M}_\star}d\xi dyd\tau \\
&\di \leq C\int_0^\tau\int \int\f{\nu(|\x|)|\mb{G}|^2}{\mb{M}_\star}
d\x\cdot \int \f{|\mb{G}|^2}{\mb{M}_\star}d\x dyd\tau\\
&\di\leq C\int_0^\tau\int
\int\f{\nu(|\x|)(|\mb{G}_0|^2+|\mb{G}_1|^2)}{\mb{M}_\star}
d\x\cdot \int \f{|\mb{G}_0|^2+|\mb{G}_1|^2}{\mb{M}_\star}d\x dyd\tau\\
% &\di\leq C(\sup_{\tau,y} \int
%\f{|\mb{G}_1|^2}{\mb{M}_\star}d\x+\v)\int_0^\tau\int\int\f{\nu(|\x|)|\mb{G}_1|^2}{\mb{M}_\star}d\x
% dyd\tau+C\int_0^\tau\|(\T_y,U_y)\|_{L^4(dy)}^4d\tau\\
 &\di\leq C(\v+\chi^2)\int_0^\tau\int\int\f{\nu(|\x|)|\mb{G}_1|^2}{\mb{M}_\star}d\x
dyd\tau+C\v^{\f12}.
\end{array}\label{(4.34)}
\end{equation}
Substituting \eqref{(4.29)}-\eqref{(4.34)} into \eqref{(4.28)}
yields that
\begin{equation}
\begin{array}{ll}
&\di \|(\phi,\psi,\zeta)(\tau,\cdot)\|^2+\int_0^\tau\Big[\|(\psi_y,\zeta_y)\|^2+\|\sqrt{(U^{R_1}_{1y},U^{R_3}_{1y})}(\phi,\zeta)\|^2\Big]d\tau\\
&\di \leq
C\int_0^\tau(\tau+\tau_0)^{-2}\|(\phi,\zeta)\|^2d\tau+C\v^{\f25}\\
&\di \quad
+C\d^{CD}\v\int_0^\tau\int_{\mathbf{R}}(1+\v\tau)^{-1}e^{-\f{C_0\v
y^2}{1+\v\tau}}(\phi^2+\zeta^2)dyd\tau\\
&\di\quad +C\sum_{|\a^\prime|=1}\int_0^\tau\int\int
\f{\nu^{-1}(|\x|)|\partial^{\a^\prime}\mb{G}|^2}{\mb{M}_\star}d\x
dyd\tau\\
&\di\quad
+C(\chi^2+\v)\int_0^\tau\int\int\f{\nu(|\x|)|\mb{G}_1|^2}{\mb{M}_\star}d\x
dyd\tau.
\end{array}
 \label{(4.35)}
\end{equation}

To recover the term $\|\p_y\|^2$ in the integral $\di
\int_0^\tau\cdots d\tau$ in (4.26), as in the previous subsection
for the compressible Navier-Stokes equations, we firstly rewrite the
equation $(\ref{(2.47)})_2$ as
\begin{equation}
\begin{array}{l}
\quad\di\f{4}{3}\f{\mu(\T)}{V}\p_{y\tau}-\psi_{1\tau}-(p-P)_y\\
\di= -\f{4}{3}(\f{\mu({\T})}{V})_y\psi_{1y}
-\f{4}{3}[(\f{\mu({\t})}{v}-\f{\mu({\T})}{V})u_{1y}]_y +\v
Q_{1}+\int\xi_1^2\Pi_{1y}d\xi,
\end{array}
\label{(4.36)}
\end{equation}
by using the equation of conservation of the mass
$(\ref{(2.47)})_1$.

Since
$$
-(p-P)_y=\f{P}{V}\p_y-\f{2}{3V}\z_y+(\f{p}{v}-\f{P}{V})v_y-\f23(\f{1}{v}-\f{1}{V})\t_y,
$$
and
$$
\p_y\psi_{1\tau}=(\p_y\psi_1)_{\tau}-(\p_{\tau}\psi_1)_y+\psi_{1y}^2,
$$
 by multiplying (\ref{(4.36)}) by $\p_y$, we get
\begin{equation}
\begin{array}{l}
\di(\f{2\mu(\T)}{3V}\p_{y}^2-\p_y\psi_1)_{\tau} +\f{P}{V}\p_y^2=
(\f{2\mu(\T)}{3V})_{\tau}\p_{y}^2
+\psi_{1y}^2+\f{2}{3V}\z_y\p_y\\
\quad\di-(\f{p}{v}-\f{P}{V})v_y\p_y+\f23(\f{1}{v}-\f{1}{V})\t_y\p_y-\f{4}{3}(\f{\mu(\T)}{V})_y\psi_{1y}\p_y\\
\quad\di -\f{4}{3}[(\f{\mu({\t})}{v}-\f{\mu({\T})}
{V})u_{1y}]_y\p_y+\v Q_1\p_y+\int\xi_1^2\Pi_{1y}d\xi\p_y.
\end{array}
\label{(4.37)}
\end{equation}
Integrating (\ref{(4.37)}) with respect to $\tau,y$ and using the
Cauchy inequality yield
\begin{equation}
\begin{array}{l}
\di \|\p_y(\tau,\cdot)\|^2+\int_0^\tau\|\p_y\|^2d\tau\leq
C\|\psi_1(\tau,\cdot)\|^2+
C\int_0^\tau\|(\psi_y,\z_y)\|^2d\tau\\
\di~~
+C\d^{CD}\v\int_0^\tau\int_{\mathbf{R}}(1+\v\tau)^{-1}e^{-\f{C_0\v
y^2}{1+\v\tau}}|(\phi,\psi,\zeta)|^2dyd\tau+C\int_0^\tau(\tau+\tau_0)^{-2}\|(\phi,\zeta)\|^2d\tau\\
\di
~~+C\v^{\f75}+C\chi\int_0^\tau\|\psi_{1yy}\|^2d\tau+\int_0^\tau\int
|\int\xi_1^2\Pi_{1y}d\xi|^2 dyd\tau.
\end{array}
\label{(4.38)}
\end{equation}
For the microscopic term $\di \int_0^\tau\int
|\int\xi_1^2\Pi_{1y}d\xi|^2 dyd\tau$, by \eqref{(2.50)}, we have
\begin{equation}
\begin{array}{l}
 \di \int_0^\tau\int |\int\xi_1^2\Pi_{1y}d\xi|^2 dyd\tau\\
 \di \leq
C\Big[\int_0^\tau\int|\int\xi_1^2(\mb{L}_\mb{M}^{-1}\mb{G}_{\tau})_y
d\xi|^2
dyd\tau+\int_0^\tau\int|\int\xi_1^2(\mb{L}_\mb{M}^{-1}\f{u_1}v\mb{G}
_y)_y d\xi|^2dyd\tau\\
\di+\int_0^\tau\int|\int\xi_1^2[\mb{L}_\mb{M}^{-1}\f{1}{v}\mb{P}_1(\xi_1\mb{G}
_y)]_yd\xi|^2dyd\tau
+\int_0^\tau\int|\int\xi_1^2[\mb{L}_\mb{M}^{-1}Q(\mb{G}
,\mb{G} )]_yd\xi|^2dyd\tau\Big]\\
\di:=\sum_{i=1}^4T_2^{i}.
\end{array}
\label{(4.39)}
\end{equation}
Note that the inverse of the linearized operator
$\mb{L}_\mb{M}^{-1}$ satisfies that , for any $h\in
\mathcal{N}^\bot$,
\begin{equation}
\begin{array}{l}
(\mb{L}_\mb{M}^{-1}h)_{\tau}=\mb{L}_\mb{M}^{-1}(h_{\tau})-2\mb{L}_\mb{M}^{-1}\{Q(\mb{L}_\mb{M}^{-1}h,\mb{M}_{\tau})\},\\[2mm]
(\mb{L}_\mb{M}^{-1}h)_y=\mb{L}_\mb{M}^{-1}(h_y)-2\mb{L}_\mb{M}^{-1}\{Q(\mb{L}_\mb{M}^{-1}h,\mb{M}_y)\}.
\end{array}
\label{(4.40)}
\end{equation}
Then we have
\begin{equation}
\begin{array}{ll}
\di T_2^1&\di \leq
C\int_0^\tau\int|\int\xi_1^2\mb{L}_\mb{M}^{-1}\mb{G}_{y\tau} d\xi|^2
dyd\tau\\
&\di
\qquad+C\int_0^\tau\int|\int\xi_1^2\mb{L}_\mb{M}^{-1}\{Q(\mb{L}_\mb{M}^{-1}\mb{G}_\tau,\mb{M}_y)\}d\xi|^2
dyd\tau\\
&\di \leq C\sum_{|\a|=2}
\int_0^\tau\int\int\f{\nu^{-1}(|\xi|)}{\mb{M}_\star}|\partial^\a
\mb{G} |^2d\xi dyd\tau\\
&\di\qquad +C\int_0^\tau\int
\int\f{\nu(|\xi|)|\mb{G}_\tau|^2}{\mb{M}_\star}
d\xi\int \f{\nu(|\xi|)|\mb{M}_y|^2}{\mb{M}_\star} d\xi dyd\tau\\
&\di \leq C\sum_{|\a|=2}
\int_0^\tau\int\int\f{\nu^{-1}(|\xi|)}{\mb{M}_\star}|\partial^\a
\mb{G} |^2d\xi dyd\tau\\
&\di\qquad +C\int_0^\tau\int|(v_y,u_y,\t_y)|^2
\int\f{\nu(|\xi|)|\mb{G}_\tau|^2}{\mb{M}_\star} d\xi dyd\tau\\
&\di \leq C\sum_{|\a|=2}
\int_0^\tau\int\int\f{\nu^{-1}(|\xi|)}{\mb{M}_\star}|\partial^\a
\mb{G} |^2d\xi dyd\tau\\
&\di\qquad +C(\v+\chi^2)\int_0^\tau\int
\int\f{\nu(|\xi|)|\mb{G}_\tau|^2}{\mb{M}_\star} d\xi dyd\tau.
\end{array}
\label{(4.41)}
\end{equation}
Similar estimates hold for $T_2^i~(i=2,3)$. Moreover,
\begin{equation}
\begin{array}{ll}
\di T_2^4&\di \leq
C\int_0^\tau\int|\int\xi_1^2\mb{L}_\mb{M}^{-1}Q(\mb{G},\mb{G}_{y})
d\xi|^2 dyd\tau\\
&\di\quad +
C\int_0^\tau\int|\int\xi_1^2\mb{L}_\mb{M}^{-1}\{Q(\mb{L}_\mb{M}^{-1}Q(\mb{G},\mb{G}),\mb{M}_y)\}d\xi|^2
dyd\tau\\
&\di \leq
C\int_0^\tau\int\int\f{\nu(|\xi|)|\mb{G}_y|^2}{\mb{M}_\star}d\xi\int
\f{|\mb{G}|^2}{\mb{M}_\star} d\xi dyd\tau\\
&\di\quad +C\int_0^\tau\int
\int\f{\nu(|\xi|)|\mb{G}|^2}{\mb{M}_\star}
d\xi\int \f{|\mb{G}_y|^2}{\mb{M}_\star} d\xi dyd\tau\\
&\di \quad+C\int_0^\tau\int|(v_y,u_y,\t_y)|^2
\int\f{\nu(|\xi|)|\mb{G}|^2}{\mb{M}_*}
d\xi\int\f{|\mb{G}|^2}{\mb{M}_\star} d\xi dyd\tau\\
&\di \leq
C(\chi^2+\v)\int_0^\tau\int\int\f{\nu(|\x|)(|\mb{G}_1|^2+|\mb{G}_y|^2)}{\mb{M}_\star}d\x
dy d\tau.
\end{array}
\label{(4.42)}
\end{equation}
Substituting \eqref{(4.39)}-\eqref{(4.42)} into \eqref{(4.38)} gives
\begin{equation}
\begin{array}{l}
\di \|\p_y(\tau,\cdot)\|^2+\int_0^\tau\|\p_y\|^2d\tau\leq
C\|\psi_1(\tau,\cdot)\|^2+
C\int_0^\tau\|(\psi_y,\z_y)\|^2d\tau\\
\di
+C\d^{CD}\v\int_0^\tau\int_{\mathbf{R}}(1+\v\tau)^{-1}e^{-\f{C_0\v
y^2}{1+\v\tau}}|(\phi,\psi,\zeta)|^2dyd\tau+C\int_0^\tau(\tau+\tau_0)^{-2}\|(\phi,\zeta)\|^2d\tau\\
\di +C\v^{\f25}+ C\sum_{|\a|=2}\int_0^\tau
\int\int\f{\nu^{-1}(|\xi|)}{\mb{M}_\star}|\partial^\a \mb{G} |^2d\xi
dyd\tau+C\chi\int_0^\tau\|\psi_{1yy}\|^2d\tau\\
\di+C(\v+\chi^2)\int_0^\tau\int
\int\f{\nu(|\xi|)(\sum_{|\a^\prime|=1}|\partial^{\a^\prime}\mb{G}|^2+|\mb{G}_1|^2)}{\mb{M}_\star}
d\xi dyd\tau .
\end{array}
\label{(4.43)}
\end{equation}

We now turn to the time derivatives. To estimate
$\|(\p_{\tau},\psi_{\tau},\z_{\tau})\|^2$, we need to use the system
(\ref{(4.18)}). By multiplying $(\ref{(4.18)})_1$ by $\p_{\tau}$,
$(\ref{(4.18)})_2$ by $\psi_{1\tau}$, $(\ref{(4.18)})_3$ by
$\psi_{i\tau}~(i=2,3)$ and $(\ref{(4.18)})_4$ by $\z_{\tau}$
respectively, and adding them together, after integrating with
respect to $\tau$ and $y$, we have
\begin{equation}
\begin{array}{l}
\di
\int_0^\tau\|(\p_{\tau},\psi_{\tau},\z_{\tau})(\tau,\cdot)\|^2d\tau\leq
C\int_0^\tau\|(\p_y,\psi_y,\z_y)\|^2d\tau+C\v^{\f25}\\
\di\qquad
+\int_0^\tau(\tau+\tau_0)^{-2}\|(\phi,\psi,\z)\|^2d\tau+C\int_0^\tau\int\int\f{\nu(|\xi|)}{\mb{M}_\star}|\mb{G}
_y|^2d\xi dyd\tau\\
\qquad \di
+C\d^{CD}\v\int_0^\tau\int_{\mathbf{R}}(1+\v\tau)^{-1}e^{-\f{C_0\v
y^2}{1+\v\tau}}|(\phi,\psi,\zeta)|^2dyd\tau.
\end{array}
\label{(4.45)}
\end{equation}
The microscopic component $\mb{G} _1$ can be estimated by using the
equation (\ref{(2.52)}). Multiplying (\ref{(2.52)}) by $\f{v\mb{G}
_1}{\mb{M}_\star}$ gives
\begin{equation}
\begin{array}{ll}
\di (v\f{\mb{G} _1^2}{2\mb{M}_\star})_{\tau}-\f{v\mb{G}
_1}{\mb{M}_\star}\mb{L}_\mb{M}\mb{G} _1&\di
=v_\tau\f{|\mb{G}_1|^2}{2\mb{M}_\star}+\bigg\{-\f3{2v\t}\mb{P}_1[\xi_1
(\f{|\xi-u|^2}{2\t}\z_y+\xi\cdot\psi_y)\mb{M}]\\
&\di\qquad+\f{u_1}{v}\mb{G} _y-\f1v\mb{P}_1(\xi_1\mb{G} _y)+Q(\mb{G}
,\mb{G} )-\mb{G} _{0\tau}\bigg\} \f{v\mb{G} _1}{\mb{M}_\star}.
\end{array}
\label{(4.46)}
\end{equation}
Integrating (\ref{(4.46)}) with respect to $\tau, \xi$ and $y$ and
using the Cauchy inequality and Lemma \ref{Lemma 4.1}-\ref{Lemma
4.4} yield that
\begin{equation}
\begin{array}{l}
\di\int\int\f{\mb{G} _1^2}{\mb{M}_\star}(\tau,y,\xi)d\xi
dy+\int_0^\tau\int\int\f{\nu(|\xi|)|\mb{G} _1|^2}{\mb{M}_\star}d\xi dyd\tau\\
\leq\di
C\v^\f25+C\sum_{|\a^\prime|=1}\int_0^\tau\|\partial^{\a^\prime}(\p,\psi,\z)\|^2d\tau
+C\int_0^\tau\int\int\f{\nu(|\xi|)}{\mb{M}_\star}|\mb{G} _y|^2d\xi
dyd\tau,
\end{array}
\label{(4.47)}
\end{equation}
where we have used the fact that
$$
\begin{array}{ll}
\di \int\int\f{v\mb{G} _1^2}{\mb{M}_\star}(\tau=0,y,\xi)d\xi
dy&\di=\int\int\f{v\mb{G} _0^2}{\mb{M}_\star}(\tau=0,y,\xi)d\xi dy\\
&\di \leq C\|(\T_y,U_y)(\tau=0,\cdot)\|^2 \leq C\v^{\f12}.
\end{array}
$$

 Next we derive the estimate on
the higher order derivatives. By multiplying $(\ref{(2.46)})_2$ by
$-\psi_{1yy}$, $(\ref{(2.46)})_3$ by $-\psi_{iyy}~(i=2,3)$,
$(\ref{(2.46)})_4$ by $-\z_{yy}$, and adding them together, we
obtain
\begin{equation}
\begin{array}{l}
\di (\sum_{i=1}^3\f{\psi_{iy}^2}{2}+\f{\z_y^2}{2})_{\tau}
+\f{4}{3}\f{\mu(\t)}{v}\psi_{1yy}^2+\sum_{i=2}^3\f{\mu(\t)}{v}\psi_{iyy}^2
+\f{\lambda(\t)}{v}\z_{yy}^2=\\
\di
-\f{4}{3}(\f{\mu(\t)}{v})_y\psi_{1y}\psi_{1yy}-\sum_{i=2}^3(\f{\mu(\t)}{v})_y\psi_{iy}\psi_{iyy}
-(\f{\lambda(\t)}{v})_y\z_y\z_{yy}\\
\di-\f{4}{3}[(\f{\mu(\t)}{v}-\f{\mu(\T)}{V})U_{1y}]_y\psi_{1yy}-[(\f{\lambda(\t)}{v}-\f{\lambda(\T)}{V})\T_y]_y\z_{yy}+(p-P)_y\psi_{1yy}\\
\di +\v Q_{1}\psi_{1yy}+(pu_{1y}-PU_{1y})\z_{yy}-[\f{4}{3}(\f{\mu(\t)}{v}u_{1y}^2-\f{\mu(\T)}{V}U_{1y}^2)\\[0.3cm]
\di+\sum_{i=2}^3\f{\mu(\t)}{v}u_{iy}^2-(\v
Q_2-\v Q_1U_1)]\z_{yy}+\sum_{i=1}^3\psi_{iyy}\int\xi_1\xi_i\Pi_{1y}d\xi\\
\di
-\z_{yy}(\sum_{i=1}^3u_i\int\xi_1\xi_i\Pi_{1y}d\xi-\f12\int\xi_1|\xi|^2\Pi_{1y}d\xi).
\end{array}
\label{(4.48)}
\end{equation}
Integrating (\ref{(4.48)}) with respect to $\tau,y$ and $\x$ yields
\begin{equation}
\begin{array}{l}
\di
\|(\psi_{y},\z_y)(\tau,\cdot)\|^2+\int_0^\tau\|(\psi_{yy},\z_{yy})\|^2d\tau
\\
\di \leq
C\int_0^\tau\|(\p_y,\psi_y,\z_y)\|^2d\tau+C\int_0^\tau(\tau+\tau_0)^{-2}\|(\phi,\psi,\z)\|^2d\tau+C\v^{\f25}\\
\quad\di
+C\d^{CD}\v\int_0^\tau\int_{\mathbf{R}}(1+\v\tau)^{-1}e^{-\f{C\v
y^2}{1+\v\tau}}|(\phi,\psi,\zeta)|^2dyd\tau\\
\quad \di+
C(\v^\f12+\chi)\int_0^\tau\int\int\f{\nu(|\xi|)}{\mb{M}_\star}|\mb{G}
_1|^2d\xi
dyd\tau+C\sum_{|\a|=2}\int_0^\tau\int\int\f{\nu(|\xi|)}{\mb{M}_\star}|\partial^\a
\mb{G} |^2d\xi
dyd\tau\\
\quad\di
+C(\v^\f12+\chi)\sum_{|\a^{\prime}|=1}\int_0^\tau\int\int\f{\nu(|\xi|)}{\mb{M}_\star}|\partial^{\a^\prime}\mb{G}
|^2d\xi dyd\tau.
\end{array}
\label{(4.49)}
\end{equation}

Again, to recover  $\|\p_{yy}\|^2$ in the time integral in (4.39),
 by applying
$\partial_y$ to $(\ref{(2.46)})_2$, we get
\begin{equation}
\psi_{1y\tau}+(p-P)_{yy} =-\f{4}{3}(\f{\mu(\T)}{V}U_{1y})_{yy}-\v
Q_{1y}-\int\xi_1^2\mb{G} _{yy}d\xi. \label{(4.50)}
\end{equation}
Note that
\begin{equation}
(p-P)_{yy}=-\f{p}{v}\p_{yy}+\f{R}{v}\z_{yy}-\f1v(p-P)V_{yy}
-\f{\p}{v}P_{yy}-\f{2v_y}{v}(p-P)_y-\f{2P_y}{v}\p_y. \label{(4.51)}
\end{equation}
Multiplying (\ref{(4.50)}) by $-\p_{yy}$ and using (\ref{(4.51)})
imply
\begin{equation}
\begin{array}{l}
\di
-\int\psi_{1y}\p_{yy}(\tau,y)dy+\int_0^\tau\int\f{p}{2v}\p_{yy}^2dyd\tau\leq
C\int_0^\tau\|(\psi_{1yy},\z_{yy})\|^2d\tau+C\v^{\f25}\\
\quad\di+C\int_0^\tau(\tau+\tau_0)^{-2}\|(\phi,\psi,\z)\|^2d\tau+C(\v^\f12+\chi)\int_0^\tau\|(\p_y,\psi_y,\z_y)\|^2d\tau\\
\quad\di+C
\sum_{|\a|=2}\int_0^\tau\int\int\f{\nu(|\xi|)}{\mb{M}_\star}|\partial^\a
\mb{G} |^2d\xi dyd\tau.
\end{array}
\label{(4.52)}
\end{equation}

To estimate $\|(\p_{y\tau},\psi_{y\tau},\z_{y\tau})\|^2$ and
$\|(\p_{\tau\tau},\psi_{\tau\tau},\z_{\tau\tau})\|^2$, we use the
system (\ref{(4.18)}) again. By applying $\partial_y$ to
(\ref{(4.18)}), and multiplying the four equations of (\ref{(4.18)})
by $\p_{y\tau}$, $\psi_{1y\tau}$, $\psi_{iy\tau}$ $(i= 2,3)$,
$\z_{y\tau}$ respectively, then adding them together and integrating
with respect to $\tau$ and $y$ give
\begin{equation}
\begin{array}{l}
\di \int_0^\tau\|(\p_{y\tau},\psi_{y\tau},\z_{y\tau})\|^2d\tau\leq
 C\int_0^\tau\|(\p_{yy},\psi_{yy},\z_{yy})\|^2d\tau+C\v^{\f25}\\
\quad\di+C\int_0^\tau(\tau+\tau_0)^{-2}\|(\phi,\psi,\z)\|^2d\tau+C(\v^\f12+\chi)\int_0^\tau\|(\p_y,\psi_y,\z_y)\|^2d\tau\\
\quad\di+C\int_0^\tau\int\int\f{\nu(|\xi|)}{\mb{M}_\star}|\mb{G}
_y|^2d\xi
 dyd\tau+C\sum_{|\a|=2}\int_0^\tau\int\int\f{\nu(|\xi|)}{\mb{M}_\star}|\partial^\a
 \mb{G} |^2d\xi dyd\tau.
\end{array}
\label{(4.53)}
\end{equation}
Similarly, we have
\begin{equation}
\begin{array}{l}
\di
\int_0^\tau\|(\p_{\tau\tau},\psi_{\tau\tau},\z_{\tau\tau})\|^2d\tau\leq
 C\int_0^\tau\|(\p_{y\tau},\psi_{y\tau},\z_{y\tau})\|^2d\tau+C\v^{\f25}\\
\quad\di+C\int_0^\tau(\tau+\tau_0)^{-2}\|(\phi,\psi,\z)\|^2d\tau+C(\v^\f12+\chi)\sum_{|\a^\prime|=1}\int_0^\tau\|\partial^{\a^\prime}(\p,\psi,\z)\|^2d\tau\\
\quad\di+C\int_0^\tau\int\int\f{\nu(|\xi|)}{\mb{M}_\star}|\mb{G}
_y|^2d\xi
 dyd\tau+C\sum_{|\a|=2}\int_0^\tau\int\int\f{\nu(|\xi|)}{\mb{M}_\star}|\partial^\a
 \mb{G} |^2d\xi dyd\tau.
\end{array}
\label{(4.54)}
\end{equation}
A suitable linear combination of \eqref{(4.49)} - \eqref{(4.54)}
gives
\begin{equation}
\begin{array}{l}
\di\|(\psi_{y},\z_y,\p_{yy})(\tau,\cdot)\|^2
+\sum_{|\a|=2}\int_0^\tau\|\partial^\a(\p,\psi,\z)\|^2d\tau\\
\di \leq
C\sum_{|\a|=2}\int_0^\tau\int\int\f{\nu(|\xi|)}{\mb{M}_\star}|\partial^\a
\mb{G}|^2d\xi
dyd\tau+C\sum_{|\a^\prime|=1}\int_0^\tau\int\int\f{\nu(|\xi|)}{\mb{M}_\star}|\partial^{\a^\prime}
\mb{G} |^2d\xi
dyd\tau\\
\di\quad+C(\v^\f12+\chi)\int_0^\tau\int\int\f{\nu(|\xi|)}{\mb{M}_\star}|\mb{G}
_1|^2d\xi
dyd\tau+C\int_0^\tau(\tau+\tau_0)^{-2}\|(\phi,\psi,\z)\|^2d\tau
\\
\di\quad
+C(\v^\f12+\chi)\sum_{|\a^\prime|=1}\int_0^\tau\|\partial^{\a^\prime}(\p,\psi,\z)\|^2d\tau
+C\v^{\f25}.
\end{array}
\label{(4.55)}
\end{equation}
To close the a priori estimate, we also need to estimate the
derivatives on the non-fluid component $\mb{G} $, i.e., $\partial^\a
\mb{G}, (|\a|=1,2)$. Applying $\partial_y$ on (\ref{(2.47)}), we
have
\begin{equation}
\begin{array}{l}
\quad\di
\mb{G} _{y\tau}-(\f{u_1}{v}\mb{G} _y)_y+\{\f1v\mb{P}_1(\xi_1\mb{M}_y)\}_y+\{\f1v\mb{P}_1(\xi_1\mb{G} _y)\}_y\\
\di =\mb{L}_\mb{M}\mb{G} _y+2Q(\mb{M}_y,\mb{G} )+2Q(\mb{G} _y,\mb{G}
).
\end{array}
\label{(4.56)}
\end{equation}
Since
$$
\mb{P}_1(\xi_1\mb{M}_y)=\f3{2v\t}\mb{P}_1[\xi_1(\f{|\xi-u|^2}{2\t}\t_y+\xi\cdot
u_y)\mb{M}],
$$
we have
$$
|\{\f1v\mb{P}_1(\xi_1\mb{M}_y)\}_y|\leq
C(v_y^2+u_y^2+\t_y^2+|\t_{yy}|+|u_{yy}|)|\hat{B}(\xi)|\mb{M},
$$
where $\hat{B}(\xi)$ is a polynomial of $\xi$. This yields that
$$
\begin{array}{ll}
\di\int_0^\tau\int\int|\{\f1v\mb{P}_1(\xi_1\mb{M}_y)\}_y\f{\mb{G}
_y}{\mb{M}_\star}|d\xi dyd\tau \leq
\f{\sigma}{8}\int_0^\tau\int\int\f{\nu(|\xi|)}{\mb{M}_\star}|\mb{G}
_y|^2d\xi dyd\tau\\
\di\qquad\qquad+C\int_0^\tau\|(\psi_{yy},\z_{yy})\|^2d\tau
+C(\v^\f12+\chi)\int_0^\tau\|(\p_y,\psi_y,\z_y)\|^2d\tau+C\v^{\f25}.
\end{array}
$$
Thus, multiplying (\ref{(4.56)}) by $\f{v\mb{G} _y}{\mb{M}_\star}$
and using the Cauchy inequality and Lemmas \ref{Lemma
4.1}-\ref{Lemma 4.4} yield
\begin{equation}
\begin{array}{l}
\di\int\int\f{|\mb{G} _y|^2}{2\mb{M}_\star}(\tau,y,\x)d\xi
dy+\int_0^\tau\int\int\f{\nu(|\xi|)}{\mb{M}_\star}|\mb{G} _y|^2d\xi
dyd\tau\\
\di \leq
C(\v^\f12+\chi)\int_0^\tau\int\int\f{\nu(|\xi|)}{\mb{M}_\star}|\mb{G}
_1|^2d\xi
dyd\tau+C(\v^\f12+\chi)\int_0^\tau\|(\p_y,\psi_y,\z_y)\|^2d\tau\\
\quad\di +C\int_0^\tau\int\int\f{\nu(|\xi|)}{\mb{M}_\star}|\mb{G}
_{yy}|^2d\xi
dyd\tau+C\int_0^\tau\|(\p_{yy},\z_{yy})\|^2d\tau+C\v^{\f25}.
\end{array}
\label{(4.57)}
\end{equation}
Similarly,
\begin{equation}
\begin{array}{l}
\di \int\int\f{|\mb{G} _{\tau}|^2}{2\mb{M}_\star}(\tau,y,\xi)d\xi
dy+\int_0^\tau\int\int\f{\nu(|\xi|)}{\mb{M}_\star}|\mb{G}
_{\tau}|^2d\xi dyd\tau\\
\di \leq C\int_0^\tau\int\int\f{\nu(|\xi|)}{\mb{M}_\star}|\mb{G}
_{y\tau}|^2d\xi
dyd\tau+C(\v^\f12+\chi)\int_0^\tau\int\int\f{\nu(|\xi|)}{\mb{M}_\star}|\mb{G}
_1|^2d\xi
dyd\tau\\
\di+C(\v^\f12+\chi)\int_0^\tau\int\int\f{\nu(|\xi|)}{\mb{M}_\star}|\mb{G}
_{y}|^2d\xi
dyd\tau\\
\di+C(\v^\f12+\chi)\sum_{|\a^\prime|=1}\int_0^\tau\|\partial^{\a^\prime}(\p,\psi,\z)\|^2d\tau
+C\int_0^\tau\|(\psi_{y\tau},\z_{y\tau})\|^2d\tau+C\v^{\f25},
\end{array}
\label{(4.58)}
\end{equation}
where we have used the fact that
$$
\begin{array}{ll}
\di \int\int\f{v|\mb{G} _{\tau}|^2}{2\mb{M}_\star}(\tau=0,y,\xi)d\xi
dy&\di =\int\int\f{|\mb{P}_1(\x_1\mb{M}_y)
|^2}{2v\mb{M}_\star}(\tau=0,y,\xi)d\xi dy\\
&\di \leq C\|(v,u,\t)_y(\tau=0,\cdot)\|^2\\
&\di =C\|(V,U,\T)_y(\tau=0,\cdot)\|^2\leq C\v^{\f12}.
\end{array}
$$

 Finally, we estimate the highest
order derivatives, that is, $\int\psi_{1y}\p_{yy}dy$ and\\
$\int_0^\tau\int\int \f{\nu(|\xi|)|\partial^\a \mb{G}
|^2}{\mb{M}_\star}d\xi dyd\tau$ with $|\a|=2$ in (\ref{(4.55)}). To
do so, it is sufficient to study  $\int\int \f{|\partial^\a
f|^2}{\mb{M}_\star}d\xi dy~(|\a|=2)$ in view of (\ref{(4.22)})-
(\ref{(4.25)}). For this, from (\ref{(2.53)}) we have
$$
vf_\tau-u_1f_y+\x_1f_y=vQ(f,f)= v[\mb{L}_\mb{M}\mb{G} +Q(\mb{G}
,\mb{G} )].
$$
Applying $\partial^\a$ $(|\a|=2)$ to the above equation gives
\begin{equation}
\begin{array}{ll}
\di v(\partial^\a f)_{\tau}-v\mb{L}_\mb{M}\partial^\a\mb{G}
-u_1(\partial^\a f)_y+\x_1(\partial^\a
f)_y\\[3mm]
\di =-\partial^\a v f_\tau+\partial^\a u_1
f_y-\sum_{|\a^\prime|=1}[\partial^{\a-\a^\prime}v\partial^{\a^\prime}f_\tau-\partial^{\a-\a^\prime}u_1\partial^{\a^\prime}f_y]\\
\di \quad +[\partial^\a(v\mb{L}_\mb{M}\mb{G}
)-v\mb{L}_\mb{M}\partial^\a\mb{G} ]+\partial^\a[vQ(\mb{G} ,\mb{G}
)].
\end{array}
\label{(4.59)}
\end{equation}
Multiplying (\ref{(4.59)}) by $\f{\partial^\a
f}{\mb{M}_\star}=\f{\partial^\a \mb{M}}{\mb{M}_\star}+\f{\partial^\a
\mb{G} }{\mb{M}_\star}$ yields
\begin{equation}
\begin{array}{l}
\quad\di (\f{v|\partial^\a f|^2}{2\mb{M}_\star})_{\tau}-
v\mb{L}_\mb{M}\partial^\a \mb{G} \cdot \f{\partial^\a \mb{G}
}{\mb{M}_\star}\\ \di =\f{\partial^\a f}{\mb{M}_\star}\bigg\{
-\partial^\a v f_\tau+\partial^\a u_1
f_y-\sum_{|\a^\prime|=1}[\partial^{\a-\a^\prime}v\partial^{\a^\prime}f_\tau-\partial^{\a-\a^\prime}u_1\partial^{\a^\prime}f_y]\\
\di \quad +[\partial^\a(v\mb{L}_\mb{M}\mb{G}
)-v\mb{L}_\mb{M}\partial^\a\mb{G} ]+\partial^\a[vQ(\mb{G} ,\mb{G}
)]\bigg\}+v\mb{L}_\mb{M}\partial^\a \mb{G} \cdot\f{\partial^\a
\mb{M}}{\mb{M}_\star}+(\cdots)_y.
\end{array}
\label{(4.60)}
\end{equation}
Hence,
$$
\begin{array}{l}
\di \int_0^\tau\int\int |\partial^\a v f_\tau\f{\partial^\a
f}{\mb{M}_\star}|d\x dyd\tau
\\ \di \leq \int_0^\tau\int \bigg[|\partial^\a
v|\int(|\mb{M}_\tau|+|\mb{G}
_\tau|)\f{|\partial^\a\mb{M}|+|\partial^\a\mb{G}
|}{\mb{M}_\star}d\x\bigg]
dyd\tau\\
\di \leq
C(\v+\chi^2)\int_0^\tau\|\partial^{\a}(\p,\psi,\z)\|^2d\tau+\f{\sigma}{16}\int_0^\tau\int\int
\f{v|\partial^\a\mb{G} |^2}{\mb{M}_\star} d\x dyd\tau\\
\di \quad +C(\v+\chi^2)\int_0^\tau\int\int \f{|\mb{G}
_\tau|^2}{\mb{M}_\star} d\x
dyd\tau\\
\di\quad
+C(\v^\f12+\chi)\sum_{|\a^\prime|=1}\int_0^\tau\|\partial^{\a^\prime}(\p,\psi,\z)\|^2d\tau+C\v^{\f25},
\end{array}
$$
and
$$
\begin{array}{l}
\di \sum_{|\a^\prime|=1}\int_0^\tau\int\int
|\partial^{\a-\a^{\prime}}v\partial^{\a^\prime}f_\tau \f{\partial^\a
f}{\mb{M}_\star}| d\x dyd\tau\\
\di \leq
\sum_{|\a^\prime|=1}\int_0^\tau\int|\partial^{\a-\a^{\prime}}v|\int
(|\partial^{\a^\prime}\mb{M}_\tau|+|\partial^{\a^\prime}\mb{G}_\tau|)
\f{|\partial^\a
\mb{M}|+|\partial^{\a}\mb{G}|}{\mb{M}_\star} d\x dyd\tau\\
\di \leq \f{\sigma}{16}\int_0^\tau\int\int \f{v|\partial^\a\mb{G}
|^2}{\mb{M}_\star} d\x
dyd\tau+C(\d+\g)\int_0^\tau\|\partial^{\a}(\p,\psi,\z)\|^2d\tau+C\v^{\f25}.
\end{array}
$$
Notice that similar estimates can be obtained for  the terms
$\partial^\a u_1 f_y\f{\partial^\a f}{\mb{M}_\star}$ and\\
$\sum_{|\a^\prime|=1}\partial^{\a-\a^\prime}u_1\partial^{\a^\prime}f_y\f{\partial^\a
f}{\mb{M}_\star}$.

Furthermore, we have
$$
\begin{array}{l}
\di
\partial^\a(v\mb{L}_\mb{M}\mb{G} )-v\mb{L}_\mb{M}\partial^\a\mb{G} =(\partial^\a
v) \mb{L}_\mb{M}\mb{G} +2v Q(\partial^\a\mb{M},\mb{G} )\\
\di ~~
+\sum_{|\a^\prime|=1}\bigg\{2vQ(\partial^{\a-\a^\prime}\mb{M},\partial^{\a^\prime}\mb{G}
) +\partial^{\a-\a^\prime}v[\mb{L}_\mb{M}\partial^{\a^\prime}\mb{G}
+2Q(\partial^{\a^\prime}\mb{M},\mb{G} )]\bigg\},
\end{array}
$$
and
$$
\begin{array}{l}
\di \partial^\a[vQ(\mb{G} ,\mb{G} )]=(\partial^\a
v)Q(\mb{G} ,\mb{G} )+2vQ(\partial^\a\mb{G} ,\mb{G} )\\
\di\qquad
+\sum_{|\a^\prime|=1}\bigg\{vQ(\partial^{\a-\a^\prime}\mb{G}
,\partial^{\a^\prime}\mb{G} )
+2(\partial^{\a-\a^\prime}v)Q(\partial^{\a^\prime}\mb{G} ,\mb{G}
)]\bigg\}.
\end{array}
$$
For illustration, we only estimate
 one of the above terms in the following because the other terms
can be discussed similarly.
$$
\begin{array}{l}
\quad\di\int_0^\tau\int\int\f{v\partial^\a \mb{G} \cdot
Q(\partial^\a \mb{G} ,\mb{G} )}{\mb{M}_\star}d\xi dyd\tau\\\di
\leq\f{\sigma}{16}\int_0^\tau\int \int\f{v|\partial^\a
\mb{G}|^2}{\mb{M}_\star}d\xi dyd\tau\\
\di~+C\int_0^\tau\int\bigg(\int\f{\nu(|\xi|)|\partial^\a \mb{G}
|^2}{\mb{M}_\star}d\xi \cdot\int\f{| \mb{G}
|^2}{\mb{M}_\star}d\xi+\int\f{ |\partial^\a \mb{G}
|^2}{\mb{M}_\star}d\xi \cdot\int\f{\nu(|\xi|)|
\mb{G} |^2}{\mb{M}_\star}d\xi\bigg) dyd\tau\\
\le \di\f{\sigma}{8}
\int_0^\tau\int\int\f{\nu(|\xi|)}{\mb{M}_\star}v|\partial^\a \mb{G}
|^2d\xi dyd\tau\\
\qquad\qquad\qquad\di +C\int_0^\tau|\sup_{y}\int\f{\nu(|\xi|)|
\mb{G} _1|^2}{\mb{M}_\star}d\xi\cdot\int\int \f{|\partial^\a
\mb{G} |^2}{\mb{M}_\star}d\xi dy|d\tau\\
\di\leq  \f{\sigma}{8}
\int_0^\tau\int\int\f{\nu(|\xi|)}{\mb{M}_\star}v|\partial^\a \mb{G}
|^2d\xi dyd\tau\\
\qquad\qquad\qquad\di+C(\v+\chi^2)\int_0^\tau\int\int\f{\nu(|\xi|)[|\mb{G}
_{1y}|^2+|\mb{G} _1|^2]}{\mb{M}_\star}d\xi
dyd\tau\\
\di\leq  \f{\sigma}{8}
\int_0^\tau\int\int\f{\nu(|\xi|)}{\mb{M}_\star}v|\partial^\a \mb{G}
|^2d\xi
dyd\tau+C(\v+\chi^2)\int_0^\tau\|(\p_y,\psi_y,\z_y)\|^2d\tau+C\v^{\f25}\\[3mm]
\di\quad\quad\qquad
+C(\v+\chi^2)\int_0^\tau\int\int\f{\nu(|\xi|)[|\mb{G}
_{y}|^2+|\mb{G} _1|^2]}{\mb{M}_\star}d\xi dyd\tau.
\end{array}
$$
Now we estimate the term $\di \int_0^\tau\int\int
v\mb{L}_\mb{M}\partial^\a \mb{G} \cdot\f{\partial^\a
\mb{M}}{\mb{M}_\star}d\x dyd\tau$ in (\ref{(4.60)}). First, note
that $\mb{P}_1(\partial^\a \mb{M})$ does not contain the term
$\partial^\a(v,u,\t)$ for $|\a|=2$. Thus, we have
\begin{equation}
\begin{array}{l}
\quad\di \int_0^\tau\int\int\f{v\mb{L}_\mb{M}\partial^\a \mb{G}
\cdot\partial^\a \mb{M}}{\mb{M}}d\xi
dyd\tau=\int_0^\tau\int\int\f{v\mb{L}_\mb{M}\partial^\a \mb{G} \cdot
\mb{P}_1(\partial^\a \mb{M})}{\mb{M}}d\xi dyd\tau\\
\di\leq \f{\sigma }{16} \int_0^\tau\int\int\f{v|\partial^\a \mb{G}
|^2}{\mb{M}_\star}d\xi
dyd\tau+C(\v^\f12+\chi)\sum_{|\a^\prime|=1}\int_0^\tau\|\partial^{\a^\prime}(\p,\psi,\z)\|^2d\tau+C\v^{\f25}.
\end{array}
\label{(4.61)}
\end{equation}
Also we can get
\begin{equation}
\begin{array}{l}
\di \int_0^\tau\int\int v\mb{L}_\mb{M}
\partial^\a
\mb{G} \cdot\partial^\a
\mb{M}(\f{1}{\mb{M}_\star}-\f{1}{\mb{M}})d\xi dyd\tau\leq \f{\sigma
}{16}\int_0^\tau\int\int\f{\nu(|\xi|)}{\mb{M}_\star}v|\partial^\a
\mb{G}|^2d\xi
dyd\tau\\
\quad\di+C\eta_0^2~\int_0^\tau\|\partial^\a(\p,\psi,\z)\|^2d\tau+C(\v^\f12+\chi)
\sum_{|\a^\prime|=1}\int_0^\tau\|\partial^{\a^\prime}(\p,\psi,\z)\|^2d\tau+C\v^{\f25},
\end{array}
\label{(4.62)}
\end{equation}
where the small constant $\eta_0$ is defined in Lemma \ref{Lemma
4.2}. The combination of (\ref{(4.61)}) and (\ref{(4.62)}) gives the
estimation on $\di \int_0^\tau\int\int v\mb{L}_\mb{M}\partial^\a
\mb{G} \cdot\f{\partial^\a \mb{M}}{\mb{M}_\star}d\x dyd\tau$.

Thus, integrating (\ref{(4.60)}) and  using the above estimates give
$$
\begin{array}{l}
\di \int\int \f{v|\partial^\a f|^2}{2\mb{M}_\star}(\tau,y,\x)d\xi
dy+\f{\sigma}{2}\int_0^\tau\int\int\f{\nu(|\xi|)}{\mb{M}_\star}v|\partial^\a
\mb{G}|^2d\xi dyd\tau\\
\di\leq
C(\v^\f12+\chi)\sum_{|\a^\prime|=1}\int_0^\tau\|\partial^{\a^\prime}(\p,\psi,\z)\|^2d\tau+
C(\eta_0+\delta+\gamma)\sum_{|\a|=2}\int_0^\tau\|\partial^\a(\p,\psi,\z)\|^2d\tau\\
\quad\di+C(\v^\f12+\chi)\sum_{|\a^\prime|=1}\int_0^\tau\int\int\f{\nu(|\xi|)}{\mb{M}_\star}|\partial^{\a^\prime}
\mb{G} |^2d\xi
dyd\tau+C\v^{\f25}\\
\quad\di+C(\v^\f12+\chi)\int_0^\tau\int\int\f{\nu(|\xi|)}{\mb{M}_\star}|\mb{G}
_1|^2d\xi dyd\tau,
\end{array}
$$
where we have used the fact that
$$
\begin{array}{ll}
&\di \int\int \f{v|\partial^\a f|^2}{2\mb{M}_\star}(\tau=0,y,\x)d\xi
dy =\int\int \f{v|\partial^\a
\mb{M}_{[V,U,\T]}|^2}{2\mb{M}_\star}(\tau=0,y,\x)d\xi dy\\
&\qquad\di \leq
C\|(V,U,\T)_{yy}(\tau=0,\cdot)\|^2+C\|(V,U,\T)_y(\tau=0,\cdot)\|_{L^4}^4\\
&\di\qquad \leq C\v^{\f32}.
\end{array}
$$

Finally, similar to  Lemma \ref{Lemma 3.3} in the previous section,
we can get
\begin{equation*}
\begin{array}{ll}
&\di \int_0^\tau\int_{{\bf R}}\v(1+\v\tau)^{-1}e^{-\f{C_0\v
y^2}{1+\v\tau}} |(\p,\psi,\z)|^2 dy
d\tau\\[3mm]
&\di  \leq
C\|(\p,\psi,\z)(\tau,\cdot)\|^2+C\int_0^\tau\|(\p_y,\psi_y,\z_y)\|^2d\tau+C\v^{\f25}\\[3mm]
&\di +C\int_0^\tau (\tau+\tau_0)^{-\f{4}{3}}\|(\p,\psi,\zeta)\|^2
d\tau+C(\v^{\f12}+\chi)\int_0^\tau\int\int\f{\nu(|\x|)|\mb{G}_1|^2}{\mb{M}_\star}d\x
dyd\tau\\
&\di +C\sum_{|\a^\prime|=1}\int_0^\tau\int\int
\f{\nu^{-1}(|\x|)|\partial^{\a^\prime}\mb{G}|^2}{\mb{M}_\star}d\x
dyd\tau.
\end{array}
\end{equation*}
Note that here we need to estimate the microscopic terms.

In summary, by combining all the above estimates and by choosing the
strength of the contact wave $\d^{CD}$, the bound on the a priori
estimate $\chi$ and the Knudsen number $\v$ to be  suitably small,
we obtain
$$
\begin{array}{ll}
\di
\mathcal{N}(\tau)+\int_0^\tau\Big[\sum_{1\leq|\a|\leq2}\|\partial^\a(\p,\psi,\z)\|^2+\|\sqrt{(U^{R_1}_{1y},U^{R_3}_{1y})}(\phi,\zeta)\|^2\Big]d\tau\\
\di +\int_0^\tau\int\int\f{\nu(|\x|)\mb{G} _1^2}{\mb{M}_\star}d\xi
dyd\tau+\sum_{|\a^\prime|=1}
\int\int\f{\nu(|\x|)|\partial^{\a^\prime} \mb{G}
|^2}{\mb{M}_\star}(\tau,y,\x)d\xi
dy\\
\di +\sum_{|\a|=2}\int\int\f{\nu(|\x|)|\partial^\a
f|^2}{\mb{M}_{\star}}(\tau,y,\x)d\xi dy\leq C\v^{\f25}.
\end{array}
$$
With the energy estimate, we complete the proof of Theorem
\ref{Theorem 4.1}.

\section*{Acknowledgments}The authors would like to thank the
referee for the valuable comments on revision of the paper. The
research of F. M. Huang was supported in part by NSFC Grant No.
10825102 for Outstanding Young scholars, NSFC-NSAF Grant No.
10676037 and 973 project of China, Grant No. 2006CB805902. The
research of Y. Wang was supported by the NSFC Grant No. 10801128.
The research of T. Yang was supported by the General Research Fund
of Hong Kong, CityU \#104310, and the NSFC Grant No. 10871082.

\medskip
Received August 2010; revised October 2010.

\medskip
\end{document}